\newtheorem{theorem}{Theorem}[section]
\newtheorem{lemma}[theorem]{Lemma}
\newtheorem{proposition}[theorem]{Proposition}
\newtheorem{corollary}[theorem]{Corollary}
\theoremstyle{definition}\newtheorem{definition}[theorem]{Definition}
\theoremstyle{definition}
\theoremstyle{definition}\newtheorem{remark}[theorem]{Remark}
\let\bbordermatrix\bordermatrix
\patchcmd{\bbordermatrix}{8.75}{4.75}{}{}
\patchcmd{\bbordermatrix}{\left(}{\left[}{}{}
\patchcmd{\bbordermatrix}{\right)}{\right]}{}{}
\newcommand{\rank}{\operatorname{rank}}
\newcommand{\spann}{\operatorname{span}}
\newcommand{\smooth}{\operatorname{smooth}(X)}
\newcommand{\Iso}{\operatorname{Isom}}
\newcommand{\im}{\operatorname{im}}
\newcommand{\Isolin}{\operatorname{Isom^{Lin}}}
\newcommand{\cone}{\operatorname{cone}}
\begin{document}

\title{Infinitesimal rigidity in normed planes}

\author{Sean Dewar}

\begin{abstract}
We prove that a graph has an infinitesimally rigid placement in a non-Euclidean normed plane if and only if it contains a $(2,2)$-tight spanning subgraph. The method uses an inductive construction based on generalised Henneberg moves and the geometric properties of the normed plane. As a key step, rigid placements are constructed for the complete graph $K_4$ by considering smoothness and strict convexity properties of the unit ball.  
\end{abstract}

\maketitle
\tableofcontents

\section{Introduction}\label{Introduction}

A framework $(G,p)$ is an embedding $p$ of the vertices of a simple graph $G$ into a given normed space. With a given framework we wish to determine if it is \textit{continuously rigid} i.e.~all continuous motions of $(G,p)$ that preserve the distances between vertices joined by an edge can be extended to a rigid motion of $X$. For Euclidean spaces it was shown by L.~Asimow and B.~Roth that if $(G,p)$ is \textit{infinitesimally rigid} (rigid under infinitesimal deformations) then $(G,p)$ is continuously rigid; further, determining whether $(G,p)$ is infinitesimally rigid can be decided by matrix rank calculations \cite{asiroth} \cite{asiroth2}. In the same pair of papers, Asimow and Roth also observed that infinitesimal rigidity is a property of the graph, in the sense that either almost all embeddings of $G$ give an infinitesimally rigid framework, or none of them do. We say a graph $G$ is \textit{rigid} in $X$ if it admits an infinitesimally rigid placement in $X$, and \textit{flexible} otherwise.

In his 1970 paper \cite{laman}, G.~Laman proved we could construct all \textit{isostatic} graphs (rigid graphs with no proper spanning rigid subgraphs) from a single edge by using \textit{Henneberg moves}; the \textit{(2-dimensional) 0-extension}, where we add a vertex and connect it to two distinct vertices, and the \textit{(2-dimensional) 1-extension}, where we delete an edge and then add a vertex connected to the ends of the deleted edge and one other vertex. A key observation is that every isostatic graph must be \textit{$(2,3)$-tight}; a graph where $|E(H)| \leq 2 |V(H)|-3$ for all subgraphs $H \subset G$ with $|V(H)| \geq 2$ (the \textit{$(2,3)$-sparsity condition}) and $|E(G)| =2|V(G)|-3$ (see \cite[Theorem 5.6]{laman}). Laman then proceeded to prove the following results:

\begin{proposition}\label{Henneberg}\cite[Theorem 6.4, Theorem 6.5]{laman}
Henneberg moves preserve the $(2,3)$-tightness and $(2,3)$-sparsity of graphs. Further, any $(2,3)$-tight graph on $2$ or more vertices can be constructed from $K_2$ by a finite sequence of Henneberg moves.
\end{proposition}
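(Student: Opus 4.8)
The plan is to treat the two assertions separately, handling the preservation statement by direct counting and the construction statement by induction on the number of vertices together with an analysis of reverse Henneberg moves.

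For the first assertion, I would note that both moves add exactly one vertex and two edges (the $1$-extension removes one edge and adds three), so the global equation $|E| = 2|V| - 3$ is automatically maintained; it remains to check sparsity. Given a subgraph $H'$ of the extended graph containing the new vertex $w$, I would delete $w$ to obtain $H = H' - w$ and compare counts. For a $0$-extension $w$ has degree at most $2$ in $H'$, so $|E(H')| \le |E(H)| + 2$, and the sparsity of the original graph applied to $H$ gives the bound immediately. For a $1$-extension the only delicate case is when $H'$ contains all three neighbours of $w$; here I would add the deleted edge $ab$ back to $H$, observe that $H + ab$ is a subgraph of the original graph and hence sparse, conclude $|E(H)| \le 2|V(H)| - 4$, and then $|E(H')| = |E(H)| + 3 \le 2|V(H')| - 3$. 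Subgraphs avoiding $w$ are subgraphs of the original graph and need no argument.

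For the construction, I would induct on $|V(G)|$ with base case $K_2$. The first step is to establish that every $(2,3)$-tight graph on at least three vertices has a vertex of degree $2$ or $3$: the tightness equation forces the average degree $\tfrac{2|E|}{|V|} = 4 - \tfrac{6}{|V|} < 4$, while sparsity of $G - v$ rules out vertices of degree $0$ or $1$, so some vertex has degree exactly $2$ or $3$. If a degree-$2$ vertex $v$ exists, deleting it reverses a $0$-extension; the result is plainly sparse and satisfies the edge equation, hence is $(2,3)$-tight and smaller, and the inductive hypothesis applies.

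The genuine obstacle is the degree-$3$ case, where I must reverse a $1$-extension: given $v$ with neighbours $a,b,c$, I need to delete $v$ and re-insert one of the non-edges $ab, ac, bc$ so that the resulting graph stays $(2,3)$-tight. The engine for this is the standard fact that the union of two tight subgraphs sharing at least two vertices is again tight, which follows from sparsity of the intersection together with inclusion--exclusion on vertices and edges. Using it, I would argue by contradiction: if no admissible edge exists, then every candidate non-edge is contained in a tight subgraph of $G - v$, while no tight subgraph $T$ can contain all of $a,b,c$, since adjoining $v$ would produce $2|T|$ edges on $|T|+1$ vertices, violating sparsity. Combining the blocking tight subgraphs --- via the union lemma when they overlap in two vertices, and via a direct edge count across a single shared vertex otherwise --- I would manufacture a tight subgraph containing $a,b,c$, the desired contradiction. (That $\{a,b,c\}$ is not already a triangle is forced, because $v$ together with a triangle would be a copy of $K_4$, which is not sparse.) This blocking-set argument is the crux; once it is in place, deleting $v$ and adding the admissible edge yields a smaller $(2,3)$-tight graph and the induction closes.
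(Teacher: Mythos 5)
The paper does not prove this proposition at all --- it is imported verbatim from Laman's 1970 paper (Theorems 6.4 and 6.5), so there is no internal argument to compare against; what matters is whether your proof is sound, and it is. The preservation counts are exactly right, including the one delicate step of re-inserting the deleted edge $ab$ into $H = H' - w$ when $H'$ contains all three neighbours of the new vertex, which is where $(2,3)$-sparsity of the original graph is really used. The construction argument is the standard Laman--Henneberg proof: minimum degree $2$ or $3$ from the count $2|E|/|V| = 4 - 6/|V|$ and sparsity of $G-v$, the trivial reverse $0$-extension in the degree-$2$ case, and in the degree-$3$ case the blocking argument with tight subgraphs, the union lemma for tight subgraphs sharing at least two vertices, and the ruling out of a tight subgraph through all of $a,b,c$ by adjoining $v$. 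One point you should make explicit when writing this up: the final contradiction needs a tight subgraph of $G-v$ through \emph{each} of the three pairs in $\{a,b,c\}$, not only through the candidate non-edges; for a pair that is already an edge of $G-v$ you must take the edge itself (a copy of $K_2$, which is tight) as its blocking subgraph, so that the combining step always has three pieces to work with. With that convention your dichotomy is complete: if two of the three tight subgraphs share at least two vertices the union lemma finishes it, and if all pairwise intersections are the single vertices $a$, $b$, $c$ then the edge sets are pairwise disjoint and inclusion--exclusion gives
\begin{align*}
|E(T_{ab} \cup T_{ac} \cup T_{bc})| = \sum \left( 2|V(T_i)| - 3 \right) = 2\left(|V(T_{ab} \cup T_{ac} \cup T_{bc})| + 3\right) - 9 = 2|V(T_{ab} \cup T_{ac} \cup T_{bc})| - 3,
\end{align*}
so the triple union is itself a tight subgraph containing $a,b,c$, the desired contradiction.
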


\begin{proposition}\label{lamanprop}\cite[Proposition 5.3, Proposition 5.4]{laman}
If $G$ is isostatic in the Euclidean plane and $G'$ is the graph formed from $G$ by a Henneberg move then $G'$ is also isostatic in the Euclidean plane.
\end{proposition}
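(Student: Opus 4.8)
The plan is to work throughout with the rigidity matrix $R(G,p)$, whose rows are indexed by the edges of $G$ and whose columns are indexed by the coordinates of the vertices, the row of an edge $ij$ carrying $p(i)-p(j)$ in the columns of $i$, $p(j)-p(i)$ in the columns of $j$, and zeros elsewhere. A framework in $\mathbb{R}^2$ is infinitesimally rigid exactly when $\rank R(G,p) = 2|V(G)|-3$, and its edge constraints are independent exactly when the rows of $R(G,p)$ are linearly independent. Since $G$ is isostatic it has a placement with $\rank R(G,p) = |E(G)| = 2|V(G)|-3$, and since Proposition \ref{Henneberg} guarantees $|E(G')| = 2|V(G')|-3$, it suffices to exhibit a single placement $p'$ of $G'$ that is infinitesimally rigid: then $\rank R(G',p') = 2|V(G')|-3 = |E(G')|$ forces the rows to be independent, so $G'$ is isostatic. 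Because infinitesimal rigidity is a generic property (Asimow--Roth), I am also free to choose the underlying isostatic placement of $G$ to satisfy any finite list of genericity conditions.

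For the $0$-extension I would write $w$ for the new vertex joined to $u,v \in V(G)$, and extend an isostatic placement $p$ of $G$ by putting $w$ at a point $q$ off the line through $p(u)$ and $p(v)$. Given an infinitesimal flex $\dot p'$ of $(G',p')$, its restriction to $V(G)$ is a flex of the rigid framework $(G,p)$, hence trivial; subtracting this trivial motion I may assume $\dot p'$ vanishes on $V(G)$. The edge conditions at $w$ then read $\dot p'(w)\cdot(q-p(u)) = 0$ and $\dot p'(w)\cdot(q-p(v)) = 0$, and since $q-p(u)$ and $q-p(v)$ are linearly independent this forces $\dot p'(w)=0$. Thus every flex is trivial and $(G',p')$ is infinitesimally rigid.

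The $1$-extension is the hard part, and the crux is a careful placement of the new vertex. Here $G'$ is obtained by deleting an edge $uv$ and adding a vertex $w$ joined to $u$, $v$ and a third vertex $x$. I would pick an isostatic placement $p$ of $G$ with $p(u),p(v),p(x)$ not collinear --- possible by genericity --- and place $w$ at a point $q$ on the line $L$ through $p(u)$ and $p(v)$, distinct from both. Writing $d$ for a direction vector of $L$, the two edges $wu$ and $wv$ give $\dot p'(w)\cdot d = \dot p'(u)\cdot d$ and $\dot p'(w)\cdot d = \dot p'(v)\cdot d$ for any flex $\dot p'$, whose difference is exactly the constraint $(\dot p'(u)-\dot p'(v))\cdot d = 0$ of the deleted edge $uv$. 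Hence $\dot p'$ restricts to a flex of $(G,p)$, which is trivial; after subtracting it I may assume $\dot p'$ vanishes on $V(G)$. Now $wu$ gives $\dot p'(w)\cdot d = 0$ and $wx$ gives $\dot p'(w)\cdot(q-p(x)) = 0$; since $p(x)\notin L$ the vectors $d$ and $q-p(x)$ are linearly independent, so $\dot p'(w)=0$ and the flex is trivial.

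The one point demanding care is the compatibility of the two requirements on the placement of $G$ in the $1$-extension step: it must be simultaneously isostatic and have $p(u),p(v),p(x)$ non-collinear. Both are generic conditions on placements of $G$ --- the first an open dense condition by Asimow--Roth, the second the complement of a proper algebraic subset --- so their intersection is nonempty, and any placement in it will do. I expect the geometric trick of forcing $w$ onto the line through $p(u)$ and $p(v)$, which recovers the deleted edge's infinitesimal constraint from the pair of collinear edges at $w$, to be the key idea of the whole argument.
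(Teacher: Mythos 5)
Your proposal is correct, and the key geometric idea --- putting the new vertex of a $1$-extension \emph{on} the line through $p(u)$ and $p(v)$ so that the two collinear edges at $w$ reconstitute the deleted constraint $(\dot p'(u)-\dot p'(v))\cdot d=0$ --- is exactly the placement trick this paper uses. The comparison is slightly unusual, though, because the paper never proves Proposition~\ref{lamanprop} itself: it cites Laman and instead proves the normed-plane generalisations, Lemma~\ref{zeroextopensmooth} and Lemma~\ref{oneext}, which hold in \emph{any} normed plane and so cover the Euclidean case. Those proofs use the same placements as yours (new vertex off the line for a $0$-extension, on the line for a $1$-extension) but run the dual argument: instead of showing every infinitesimal flex of $(G',p')$ is trivial, they show every stress (row dependence) of $R(G',p')$ restricts to a stress of $(G,p)$ and hence vanishes, giving independence; isostaticity then follows from the edge count via Corollary~\ref{isostaticgraphs}, just as in your final step. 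The two arguments are kernel/cokernel mirror images of each other, but they are not interchangeable in the paper's setting: your flex argument leans on the three-dimensional space of Euclidean trivial motions (you subtract a rigid motion, possibly involving a rotation, to make the flex vanish on $V(G)$), and in a non-Euclidean normed plane the trivial motions are only the two-dimensional space of translations, while edge directions must be replaced by support functionals. The stress formulation avoids trivial motions entirely, which is precisely what lets the paper's lemmas work in all normed planes; your version is the cleaner and more standard one for the Euclidean statement actually being asked. One small point worth making explicit in your write-up: the trivial flex you subtract is the restriction to $p'$ of an infinitesimal isometry of the whole plane, so it is indeed a trivial motion of $(G',p')$ and not merely of $(G,p)$ --- this is what legitimises the reduction to flexes vanishing on $V(G)$.
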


Combining Proposition \ref{Henneberg} and Proposition \ref{lamanprop} we obtain the following:

\begin{theorem}\label{lamanstheorem}\cite[Theorem 5.6, Theorem 6.5]{laman}
For any graph $G$ with $|V(G)| \geq 2$, $G$ is isostatic in the Euclidean plane if and only if $G$ is $(2,3)$-tight.
\end{theorem}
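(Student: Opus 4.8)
The plan is to establish the two implications separately. Sufficiency ($(2,3)$-tight $\Rightarrow$ isostatic) will follow almost immediately from the two preceding propositions by induction along a Henneberg construction, whereas necessity (isostatic $\Rightarrow$ $(2,3)$-tight) is supplied by neither proposition and requires a direct count on the rigidity matrix, via the Asimow--Roth characterisation of infinitesimal rigidity by matrix rank.

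For necessity, suppose $G$ is isostatic and fix a regular (generic) placement $p$, so that infinitesimal rigidity is detected by $\rank R(G,p)$. The fact I would rely on is that the kernel of the rigidity matrix always contains the three-dimensional space of trivial infinitesimal motions of the plane (two translations and one rotation) as soon as $p$ assigns at least two distinct points, whence $\rank R(H, p|_{V(H)}) \le 2|V(H)| - 3$ for every subgraph $H$ with $|V(H)| \ge 2$. Since $G$ is rigid, $\rank R(G,p) = 2|V(G)| - 3$; since $G$ has no proper spanning rigid subgraph, deleting any edge destroys rigidity, so the rows of $R(G,p)$ are linearly independent and $|E(G)| = \rank R(G,p) = 2|V(G)| - 3$, the tightness equality. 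For the sparsity condition, the rows indexed by $E(H)$ form an independent subfamily whose span, after discarding the columns of vertices outside $H$, is exactly the row space of $R(H, p|_{V(H)})$; hence $|E(H)| \le \rank R(H, p|_{V(H)}) \le 2|V(H)| - 3$. Thus $G$ is $(2,3)$-tight.

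For sufficiency, let $G$ be $(2,3)$-tight with $|V(G)| \ge 2$. Proposition \ref{Henneberg} provides a sequence $K_2 = G_0, G_1, \dots, G_n = G$ in which each $G_{i+1}$ arises from $G_i$ by a Henneberg move, and I would induct on $i$. The base case $K_2$ is a single edge, which is infinitesimally rigid and satisfies $|E| = 1 = 2 \cdot 2 - 3$, hence isostatic. If $G_i$ is isostatic then Proposition \ref{lamanprop} shows $G_{i+1}$ is isostatic, closing the induction and giving that $G = G_n$ is isostatic.

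The conceptual weight of the argument therefore sits inside the two cited propositions: Proposition \ref{Henneberg} packages the combinatorics of the sparsity matroid, and Proposition \ref{lamanprop} packages the geometry. Within the present theorem itself the only genuine step is the necessity count, namely making the reduction from the graph-theoretic notion of ``isostatic'' to the numerical identity $|E| = 2|V| - 3$ precise through independence of the edge rows and the universal bound $\rank R \le 2|V| - 3$. The real difficulty has been exported to Proposition \ref{lamanprop}, and in particular to its $1$-extension case, where one must verify that a suitable position of the new vertex keeps the rigidity matrix at full rank after the deleted edge is removed; this is the step I would expect to be the main obstacle when proving the underlying propositions, and it is the geometric ingredient that the rest of the paper must replace in the normed setting.
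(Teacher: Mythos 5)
Your proposal is correct and takes essentially the same route as the paper: the sufficiency direction is precisely the paper's combination of Proposition \ref{Henneberg} and Proposition \ref{lamanprop} via induction along a Henneberg construction starting from the isostatic base case $K_2$. The necessity count you supply (rows of $R(G,p)$ indexed by $E(H)$ are independent, are supported on the columns of $V(H)$, and annihilate the $3$-dimensional space of trivial motions, giving $|E(H)| \leq 2|V(H)|-3$) is the standard argument that the paper does not reproduce but simply cites as Laman's Theorem 5.6; it is the same counting argument the paper later runs in the normed setting via Corollary \ref{papernecessary} and Theorem \ref{papernecessary2}.
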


In this article we consider the following question: if $X$ is a \textit{non-Euclidean normed plane} (a 2-dimensional space with a norm that is not induced by an inner product) can we characterise graphs that are rigid in $X$? Framework rigidity in non-Euclidean normed spaces has been considered for $\ell_p$ normed spaces \cite{noneuclidean}, polyhedral normed spaces \cite{polyhedra} and matrix normed spaces such as the Schatten $p$-normed spaces \cite{matrixnorm}. For some normed planes we have similar results to Theorem \ref{lamanstheorem}; for example a graph $G$ is isostatic in any $\ell_p$ plane ($p \neq 2$) or any polyhedral normed plane if and only if $G$ is \textit{$(2,2)$-tight} i.e.~ $|E(H)| \leq 2 |V(H)|-2$ for all subgraphs $H \subset G$ (the \textit{$(2,2)$-sparsity condition}) and $|E(G)| =2|V(G)|-2$ \cite{noneuclidean} \cite{polyhedra}. If $G$ is isostatic in any non-Euclidean normed plane, $G$ will be $(2,2)$-tight (see part \ref{papernecessary2item3} of Theorem \ref{papernecessary2}). In fact, these $(2,2)$-tight graphs are exactly the rigid graphs for any non-Euclidean normed plane, which we prove with the following result:

\begin{theorem}\label{laman2}
Let $X$ be a non-Euclidean normed plane. Then a graph $G$ is isostatic in $X$ if and only if $G$ is $(2,2)$-tight.
\end{theorem}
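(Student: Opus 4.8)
The plan is to mirror the structure of Laman's argument (Proposition \ref{Henneberg} together with Proposition \ref{lamanprop}), but with the sparsity count shifted from $(2,3)$ to $(2,2)$. The discrepancy reflects the fact that a non-Euclidean norm admits no infinitesimal rotations, so the space of trivial infinitesimal motions is only the $2$-dimensional translation space rather than being $3$-dimensional. The necessity direction should be essentially free: if $G$ is isostatic in $X$ then part \ref{papernecessary2item3} of Theorem \ref{papernecessary2} already gives that $G$ is $(2,2)$-tight. The underlying reason is that the rigidity matrix has $2|V(G)|$ columns, its kernel always contains the $2$-dimensional translation space, and infinitesimal rigidity forces its rank to be exactly $2|V(G)|-2$; combined with minimality (independence of the rows) this yields $|E(G)| = 2|V(G)|-2$ together with the subgraph sparsity inequalities. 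So the real content is the converse.

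For sufficiency I would argue by induction. First I would establish the combinatorial backbone: an analogue of Proposition \ref{Henneberg} stating that every $(2,2)$-tight simple graph on at least four vertices can be generated from $K_4$ by a finite sequence of generalised Henneberg moves. These are the $0$-extension (add a degree-$2$ vertex joined to two existing vertices) and the $1$-extension (delete an edge $uv$ and add a vertex joined to $u$, $v$ and a third vertex), possibly supplemented by the further moves the $(2,2)$ count may require. The base case $K_4$ is forced: since $|E|=2|V|-2$ together with simplicity rules out $(2,2)$-tight graphs on two or three vertices, while $K_4$ satisfies $|E|=6=2\cdot 4-2$ and all subgraph inequalities. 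This combinatorial step is standard Nash-Williams/Tay-type sparse-matroid bookkeeping and I expect it to go through cleanly.

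The geometric core has two halves. First, I would construct an infinitesimally rigid placement of $K_4$ in $X$: since $K_4$ has exactly six edges and the target rank is $2\cdot 4 - 2 = 6$, I must place the four points so that the $6 \times 8$ rigidity matrix has full row rank, i.e.\ its kernel is exactly the translations. The rows are built from the supporting functionals of the unit ball at the six edge directions, so the task reduces to choosing the four points so that these functionals sit in sufficiently general position; here the smoothness and strict convexity of the unit ball (as advertised in the abstract) are exactly what let me control and separate the edge functionals. Second, for the inductive step I would prove the analogue of Proposition \ref{lamanprop}: that each generalised Henneberg move sends a rigid placement of $G$ to a rigid placement of $G'$. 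For the $0$-extension this amounts to positioning the new degree-$2$ vertex so that its two new edge constraints are independent and transverse to the translation space, which holds for all but a small set of positions. The $1$-extension is more delicate, since deleting $uv$ can drop the rank by one and the three new edges at the added vertex must restore and increase the rank by the correct amount; I would handle this by a genericity/continuity argument showing that for almost every placement of the new vertex the resulting rigidity matrix regains full rank. Since a $(2,2)$-tight graph has exactly $2|V|-2$ edges, attaining rank $2|V|-2$ simultaneously forces independence, so rigidity upgrades automatically to isostaticity.

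The hard part will be the geometric genericity arguments, and in particular the base case $K_4$, precisely because $X$ is only assumed non-Euclidean: the norm need not be differentiable (smooth) nor strictly convex everywhere, so the supporting functional along an edge is either multi-valued (at non-smooth directions) or shared between distinct directions (at non-strictly-convex directions), and the rigidity matrix is only well-behaved at edge directions avoiding these pathologies. The crux is therefore to show that the set of bad directions is negligible --- in a normed plane the non-smooth and non-strictly-convex unit-ball directions are at most countable --- and to leverage the non-Euclidean hypothesis to guarantee that genuinely independent functionals exist (a Euclidean norm would make every edge functional radial and collapse the target rank to $2|V|-3$). Pinning down exactly how smoothness and strict convexity interact to force the rank of the $K_4$ matrix up to $6$, uniformly over the allowed placements, is where I expect the main difficulty and the bulk of the technical work to lie.
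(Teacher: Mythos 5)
Your overall architecture matches the paper's (necessity from part \ref{papernecessary2item3} of Theorem \ref{papernecessary2}; sufficiency by an inductive construction starting at $K_4$ with moves that preserve isostaticity), but there are two genuine gaps, and they are precisely where the paper's real work lies. First, the combinatorial backbone is not ``standard bookkeeping possibly supplemented by further moves'': $0$- and $1$-extensions alone provably cannot generate all $(2,2)$-tight graphs from $K_4$ (e.g.\ two copies of $K_4$ sharing a vertex, or any $(2,2)$-tight graph with several $K_4$ blocks, cannot arise this way, since these moves never create a new $K_4$ subgraph). The correct recursion (Proposition \ref{nonHenneberg}, due to Nixon--Owen) requires two additional operations, vertex splitting and the vertex-to-$K_4$ extension, and each of these needs its own geometric preservation lemma. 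These are not routine: the paper proves them (Lemmas \ref{edgemove} and \ref{vertexmove}) by passing to deliberately degenerate, non-well-positioned placements, forming a pseudo-rigidity matrix, and recovering honest independent placements via a framework-limit argument (Proposition \ref{framelimit}). Nothing in your proposal supplies this machinery, and the vertex-to-$K_4$ lemma in turn consumes the rigidity of $K_4$, so the dependence structure is more delicate than your sketch suggests.

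Second, your plan for the base case $K_4$ --- place the four points so the six edge directions avoid the ``bad'' (non-smooth, non-strictly-convex) directions and the support functionals sit in general position --- cannot work, for two reasons. The claim that non-strictly-convex directions are at most countable is false: for a polyhedral norm (e.g.\ the $\ell_\infty$ plane) \emph{every} direction lies in a flat of the unit sphere, so there is nothing to avoid; and in fact the paper's proof in the non-strictly-convex case (Lemma \ref{keylemma1}) does the opposite of avoiding flats --- it places edges \emph{inside} a flat cone so that several edges share one support functional, which is exactly what makes the $6\times 8$ matrix have independent rows. More fundamentally, no genericity argument can force rank $6$: in the Euclidean plane every placement of $K_4$ carries a stress (rank at most $5$), and a smooth strictly convex non-Euclidean norm is indistinguishable from the Euclidean one at the level of ``general position of functionals.'' The non-Euclidean hypothesis must enter through a global, quantitative fact; the paper uses the Alonso--Ben\'{i}tez characterisation (Theorem \ref{a8}, that $\inf\{\|a+b\|\}<\sup\{\|a+b\|\}$ over $\|a\|=\|b\|=1$, $\|a-b\|=\epsilon$), builds the configuration space $V(p)$ of $K_4-e$ as a compact $C^1$-manifold, and applies the Mean Value Theorem along a path to find one placement where the deleted edge's length has nonzero derivative, hence where the flex of $K_4-e$ fails to be a flex of $K_4$. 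You correctly identify this as the hard part, but the proposal contains no idea that would close it, and the ``avoid pathologies and take a generic placement'' strategy is structurally incapable of doing so.
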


To prove Theorem \ref{laman2} we employ a similar method to Laman, however, we require two additional graph operations: \textit{vertex splitting} (see Section \ref{Edge-to-K3 extensions}) and \textit{vertex-to-$K_4$ extensions} (see Section \ref{Vertex-to-K4 extensions}). These graph operations were originally applied in the context of infinitesimal rigidity in \cite{vertexsplit} and \cite{twotwoparttwo} respectively. The following result provides an analogue for Proposition \ref{Henneberg}.

\begin{proposition}\label{nonHenneberg}\cite[Theorem 1.5]{twotwoparttwo}
Henneberg moves, vertex splitting and vertex-to-$K_4$ extensions preserve $(2,2)$-tightness and $(2,2)$-sparsity. Further, if $G$ is $(2,2)$-tight then it may constructed from $K_1$ by a finite sequence of Henneberg moves, vertex splitting and vertex-to-$K_4$ extensions.
\end{proposition}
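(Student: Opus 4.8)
The plan is to separate the statement into its \emph{preservation} half and its \emph{construction} half and treat them by rather different techniques. Throughout I would work with the count function $f(H) = 2|V(H)| - |E(H)|$ on subgraphs $H$, so that $(2,2)$-sparsity reads $f(H) \geq 2$ for every subgraph with at least one vertex, and $(2,2)$-tightness adds the global equality $f(G) = 2$. Call a subgraph \emph{critical} if $f(H) = 2$. The key structural fact I would establish first is submodularity of $f$: for vertex sets $X, Y$ one has $f(X) + f(Y) \geq f(X \cup Y) + f(X \cap Y)$, so that critical subgraphs are closed under union and intersection whenever they overlap in a vertex.

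For the preservation half I would verify each of the four operations separately. The global equality $f(G') = 2$ is in every case an immediate arithmetic check, since each operation increases the vertex count by some $k$ and the net edge count by $2k$ (with $k = 1$ for the two Henneberg moves and for vertex splitting, and $k = 3$ for the vertex-to-$K_4$ extension, where the six internal edges of the new $K_4$ are the net addition). The substantive point is that \emph{sparsity} survives, i.e.\ no subgraph $H'$ of $G'$ has $f(H') < 2$. I would argue this by contradiction: given a violating $H'$ in $G'$, I would project it back into $G$ by deleting or contracting the newly created vertices and edges and show the resulting subgraph of $G$ already violates sparsity. The only care needed is a short case analysis recording how many of the new vertices, and which of the new edges, the subgraph $H'$ contains.

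For the construction half I would induct on $|V(G)|$, with base case $K_1$. A $(2,2)$-tight graph on at least two vertices has minimum degree at least $2$ (an isolated or degree-one vertex would force $f(G - v) < 2$), while its average degree is $\frac{2(2|V|-2)}{|V|} < 4$; hence some vertex $v$ has degree exactly $2$ or $3$. If $\deg(v) = 2$, an inverse $0$-extension (deleting $v$) yields a smaller $(2,2)$-tight graph. If $\deg(v) = 3$ with neighbours $a, b, c$, I would attempt an inverse $1$-extension: delete $v$ and add one of the currently absent edges among $ab, bc, ca$. This fails only when every such addition creates a subgraph with $f < 2$, equivalently when each admissible pair of neighbours already lies in a common critical subgraph avoiding $v$. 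Intersecting and unioning these critical blockers via submodularity then forces a highly constrained local configuration around $v$.

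The main obstacle, and precisely the reason the two extra operations are required, is this blocked case: I expect to show that when no degree-$2$ or degree-$3$ vertex admits an inverse Henneberg move, the blocker analysis pins the neighbourhood down either to a configuration admitting an inverse vertex split or to an induced $K_4$ contractible by an inverse vertex-to-$K_4$ extension, each producing a strictly smaller $(2,2)$-tight graph. Making this dichotomy exhaustive --- ruling out every way the critical blockers can interlock so that none of the four inverse moves applies --- is the delicate combinatorial heart of the argument and the step I would expect to consume most of the work.
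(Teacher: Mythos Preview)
The paper does not prove this proposition at all: it is quoted verbatim from Nixon and Owen \cite{twotwoparttwo} and used as an input to the rest of the argument. So there is no in-paper proof to compare your proposal against; the relevant comparison would be with the Nixon--Owen paper itself.

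That said, your outline is the standard shape such inductive characterisations take, and it is broadly the shape of the Nixon--Owen argument. The preservation half is routine and your sketch is fine. Your arithmetic bookkeeping (each operation adds $k$ vertices and $2k$ edges) is correct, and the back-projection strategy for sparsity is the right idea. For the construction half, your degree argument is correct: a $(2,2)$-tight graph on at least two vertices has minimum degree at least $2$ and average degree below $4$, and an inverse $0$-extension at a degree-$2$ vertex always succeeds since deletion of that vertex leaves a subgraph of $G$ with the right global count. Your description of the blocked degree-$3$ case via critical subgraphs and submodularity is also the standard set-up.

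Where your proposal is honest but incomplete is exactly where you flag it: the exhaustive analysis showing that when every degree-$2$ and degree-$3$ vertex is blocked one can always perform an inverse vertex split or find a contractible $K_4$. This is genuinely the content of the cited result, and it is not a short case check; in particular one has to handle the situation where some or all of the edges among the three neighbours of a degree-$3$ vertex are already present (which pushes toward an induced $K_4$), and to verify that contracting such a $K_4$ really does return a $(2,2)$-tight graph, which requires its own blocker analysis. Your plan is sound as a plan, but be aware that carrying out that final dichotomy rigorously is a paper-length task, which is precisely why the present paper imports it rather than reproving it.
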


To complete our characterisation we need an analogue of Proposition \ref{lamanprop}. Of the four graph operations, the vertex to K4 proves the most challenging. In particular, we must first establish that K4 is isostatic in any non-euclidean normed plane. 

The structure of the paper will be as follows.

In Section \ref{Framework and graph rigidity} we shall lay out some of the basic definitions and results for graph rigidity in non-Euclidean normed spaces. We shall also develop many of the tools we will need to prove Theorem \ref{laman2}, such as how to approximate frameworks with non-differentiable edge-distances with frameworks with differentiable edge-distances.

In Section \ref{Rigidity of K4 in all normed planes} we shall prove that $K_4$ is rigid in all normed planes. To do this we shall split into three cases dependent on whether the normed plane $X$ is \textit{smooth} (the norm of $X$ is differentiable at every non-zero point) or \textit{strictly convex} (the unit ball of $X$ is strictly convex). The cases will be:
\begin{enumerate}[(i)]
\item $X$ is not strictly convex,
\item $X$ is strictly convex but not smooth, 
\item $X$ is both strictly convex and smooth. 
\end{enumerate}
For the first case we will construct an infinitesimally rigid placement of $K_4$ that takes advantage of the lack of strict convexity. In the second case we shall construct a sequence of placements $p^n$ of $K_4$ and show that $(K_4,p^n)$ will be infinitesimally rigid for large enough $n$. In the last case we shall use methods utilised in \cite{rotation} to prove the existence of an infinitesimally rigid placement of $K_4$.

In Section \ref{Graph extensions in the normed plane} we shall define the required graph operations that we need and show each move preserves graph isostaticity in non-Euclidean normed planes. 

In Section \ref{Generalised Laman and Lovasz and Yemini theorems} we shall prove Theorem \ref{laman2} using the results from Section \ref{Rigidity of K4 in all normed planes} and Section \ref{Graph extensions in the normed plane}, and we shall give some immediate corollaries to the result. We shall also use Theorem \ref{laman2} to give some sufficient connectivity conditions for graph rigidity analogous to those given by Lov\'{a}sz \& Yemini for the Euclidean plane in \cite{lovasz}.

\section{Preliminaries}\label{preliminaries}

All normed spaces $(X, \| \cdot \|)$ shall be assumed to be over $\mathbb{R}$ and finite dimensional; further we shall denote a normed space by $X$ when there is no ambiguity. For any normed space $X$ we shall use the notation $B^X_r(x)$, $B^X_r[x]$ and $S^X_r[x]$ for the open ball, closed ball and the sphere with centre $x \in X$ and radius $r > 0$ respectively. When it is clear what normed space we are talking about we shall drop the $X$; if the normed space is the dual space $X^*$ we shall shorten to $B^*_r[f]$, $B^*_r(f)$ and $S^*_r[f]$ for any $f \in X^*$ and $r >0$. For any $x_1, x_2 \in X$ we denote by
\begin{align*}
[x_1,x_2] := \{tx_1 +(1-t)x_2 : t \in [0,1] \} \qquad (x_1,x_2) := \{tx_1 +(1-t)x_2 : t \in (0,1) \}.
\end{align*}
the \textit{closed line segment (for $x_1,x_2$)} and \textit{open line segment (for $x_1,x_2$)} respectively.

Given normed spaces $X,Y$ we shall denote by $L(X,Y)$ the normed space of all linear maps from $X$ to $Y$ with the operator norm $\|\cdot\|_{\text{op}}$ and $A(X, Y)$ to be space of all affine maps from $X$ to $Y$ with the norm topology. If $X=Y$ we shall abbreviate to $L(X)$ and $A(X)$ and if $Y= \mathbb{R}$ with the standard norm we define $X^*:= L(X, \mathbb{R})$ and refer to the operator norm as $\| \cdot \|$ when there is no ambiguity. We denote by $\iota$ the identity map on $X$.

\subsection{Support functionals, smoothness and strict convexity} \label{Support functionals, smoothness and strict convexity}

Let $x \in X$ and $f \in X^*$, then we say that $f$ is \textit{support functional} of $x$ if $\|f\| = \|x\|$ and $f(x) = \|x\|^2$. By an application of the Hahn-Banach theorem it can be shown that every point must have a support functional. 

We say that a non-zero point $x$ is \textit{smooth} if it has a unique support functional and define $\smooth \subseteq X \setminus \{0\}$ to be the set of smooth points of $X$. 

The \textit{dual map} of $X$ is the map $\varphi : \smooth \cup \{0\} \rightarrow X^*$ that sends each smooth point to its unique support functional and $\varphi(0)=0$. It is immediate that $\varphi$ is homogeneous since $f$ is the support functional of $x$ if and only if $a f$ is the support functional of $ax$ for $a \neq 0$.

\begin{remark}
If $X$ is Euclidean with inner product $\left\langle \cdot , \cdot \right\rangle$ then all non-zero points are smooth and we have $\varphi(x) = \left\langle x, \cdot \right\rangle$ where $\left\langle x, \cdot \right\rangle : y \mapsto \left\langle x, y \right\rangle$.
\end{remark}

\begin{proposition}\cite[Proposition 2.3]{mypaper}\label{paper1}
For any normed space $X$ the following properties hold:
\begin{enumerate}[(i)]
\item \label{paper1item1} For $x_0 \neq 0$, $x_0 \in \smooth$ if and only if $x \mapsto \|x\|$ is differentiable at $x_0$.

\item \label{paper1item3} If $x \mapsto \|x\|$ is differentiable at $x_0$ then it has derivative $\frac{1}{\|x_0\|}\varphi(x_0)$.

\item \label{paper1item0} The set $\smooth$ is dense in $X$ and $\smooth^c$ has Lebesgue measure zero with respect to the Lebesgue measure on $X$.

\item \label{paper1item2} The map $\varphi$ is continuous.
\end{enumerate}
\end{proposition}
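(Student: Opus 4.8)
The plan is to treat the norm $N(x) := \|x\|$ as a convex, positively $1$-homogeneous function on $X$ and to recast the whole proposition in the language of its one-sided directional derivatives and subgradients. The starting observation is that support functionals of a point $x_0 \neq 0$ are in bijection with \emph{norming functionals}: writing $f = \|x_0\| g$, the conditions $\|f\| = \|x_0\|$ and $f(x_0) = \|x_0\|^2$ are equivalent to $\|g\| = 1$ and $g(x_0) = \|x_0\|$. I would then show that the set of norming functionals coincides with the subdifferential $\partial N(x_0) := \{ g \in X^* : g(w) \le N'(x_0;w) \text{ for all } w \in X\}$, where $N'(x_0;v) := \lim_{t \downarrow 0} t^{-1}(N(x_0 + tv) - N(x_0))$ is the one-sided directional derivative (which exists by convexity, the difference quotient being monotone in $t$). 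Thus $x_0 \in \smooth$ is equivalent to $\partial N(x_0)$ being a singleton.

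The key technical step, and the one I expect to be the main obstacle, is the support-function formula $N'(x_0; v) = \max_{g \in \partial N(x_0)} g(v)$. The inequality $g(v) \le N'(x_0;v)$ is immediate from the definition, so the content is attainment: given $v$, I would define a linear functional on $\mathbb{R}v$ by $tv \mapsto t\, N'(x_0;v)$, check it is dominated by the sublinear map $w \mapsto N'(x_0;w)$, and invoke Hahn--Banach (elementary in finite dimensions) to extend it to some $g$ with $g \le N'(x_0;\cdot)$ everywhere and $g(v) = N'(x_0;v)$; the triangle inequality then forces $\|g\| \le 1$ while testing against $\pm x_0$ forces $g(x_0) = \|x_0\|$, so $g$ is norming. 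With this formula in hand part (i) follows: $N$ is G\^ateaux differentiable at $x_0$ exactly when $v \mapsto N'(x_0;v)$ is linear, which by the formula happens exactly when $\partial N(x_0)$ is a singleton; and for convex functions on a finite-dimensional space G\^ateaux differentiability coincides with Fr\'echet differentiability. Part (ii) is then bookkeeping: the derivative $DN(x_0)$ is the unique element $g$ of $\partial N(x_0)$, namely $\tfrac{1}{\|x_0\|}\varphi(x_0)$; alternatively one reads it off directly from $1$-homogeneity via $DN(x_0)(x_0) = \|x_0\|$ together with $\|DN(x_0)\| = 1$.

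For part (iii) I would appeal to Rademacher's theorem: $N$ is globally Lipschitz, hence Lebesgue-almost-everywhere differentiable, so by part (i) the set $\smooth^c \setminus \{0\}$ is null; adjoining the null set $\{0\}$ leaves $\smooth^c$ of measure zero, and since a measure-zero set has empty interior, $\smooth$ is dense in $X$.

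Finally, for the continuity in part (iv) I would run a compactness argument, valid because $X$ is finite dimensional. Take $x_n \to x_0$ with $x_0 \in \smooth \cup \{0\}$. At $0$ continuity is clear since $\|\varphi(x_n)\| = \|x_n\| \to 0$. For $x_0 \neq 0$ the functionals $\varphi(x_n)$ satisfy $\|\varphi(x_n)\| = \|x_n\|$ and so form a bounded, hence relatively compact, sequence in $X^*$; any subsequential limit $f$ satisfies $\|f\| = \|x_0\|$ and $f(x_0) = \|x_0\|^2$ by continuity of the norm and joint continuity of the evaluation pairing, so $f$ is a support functional of $x_0$. Since $x_0$ is smooth this forces $f = \varphi(x_0)$, and as every convergent subsequence shares this limit, the whole sequence converges to $\varphi(x_0)$, giving continuity of $\varphi$.
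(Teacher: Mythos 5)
Your proof is correct, but there is nothing in this paper to compare it against: Proposition \ref{paper1} is stated here without proof, being quoted from the author's earlier work \cite{mypaper}. Judged on its own merits, your convex-analytic treatment is sound and self-contained. The identification of (normalised) support functionals with the subdifferential $\partial N(x_0)$, together with the max formula $N'(x_0;v) = \max_{g \in \partial N(x_0)} g(v)$ obtained by Hahn--Banach extension from the line $\mathbb{R}v$ (the domination check for negative $t$ using $N'(x_0;v)+N'(x_0;-v)\geq 0$ is exactly the point that needs care, and you have it), correctly yields both (i) and (ii); you are also right to flag the passage from G\^ateaux to Fr\'echet differentiability, since the proposition's ``differentiable'' should be read in the Fr\'echet sense and this step is automatic only because $N$ is convex and $X$ is finite dimensional. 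For (iii), the norm is $1$-Lipschitz, so Rademacher applies; one could equally invoke the almost-everywhere differentiability of convex functions and avoid Rademacher altogether, but either way the conclusion that $\smooth^c$ is null, hence has empty interior, hence $\smooth$ is dense, is correct. Your compactness argument for (iv) is the standard one and is complete: boundedness of $(\varphi(x_n))_n$ in the finite-dimensional dual, identification of every subsequential limit as a support functional of $x_0$ (the cross-term estimate $|\varphi(x_n)(x_n) - \varphi(x_n)(x_0)| \leq \|\varphi(x_n)\|\,\|x_n - x_0\|$ is what your appeal to joint continuity of the pairing amounts to), and uniqueness of the support functional at the smooth point $x_0$. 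The only detail worth making explicit is that when $x_0 \neq 0$ the terms $x_n \in \smooth \cup \{0\}$ are eventually nonzero, hence eventually smooth, so that $\varphi(x_n)$ really is a support functional for large $n$ --- a triviality, but it completes the argument.
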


If $\smooth \cup \{0\} =X$ then we say that $X$ is \textit{smooth}. We define a norm to be \textit{strictly convex} if $\|tx +(1-t) y\| < 1$ for all distinct $x,y \in S_1[0]$ and $t \in (0,1)$. The following is a useful property of strictly convex spaces.

\begin{proposition}\label{str}
Let $X$ be strictly convex then the following hold:
\begin{enumerate}[(i)]
\item \label{stritem1} $\varphi$ is injective.

\item \label{stritem2} If $x,y \in X$ are linearly independent then $\varphi(x), \varphi(y)$ are linearly independent.
\end{enumerate}
\end{proposition}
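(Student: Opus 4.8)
The plan is to prove \eqref{stritem1} directly from strict convexity by a midpoint argument, and then to deduce \eqref{stritem2} from \eqref{stritem1} together with the homogeneity of $\varphi$ noted immediately after its definition. The real content lies entirely in the first part; the second is essentially formal once the first is in hand.

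For \eqref{stritem1}, I would suppose $\varphi(x) = \varphi(y) =: f$ and aim to conclude $x = y$. If $f = 0$ then $\|x\| = \|f\| = 0 = \|y\|$, so $x = y = 0$ and we are done. Otherwise $x,y$ are nonzero smooth points and $f$ is the support functional of both, giving $\|x\| = \|y\| = \|f\| =: r > 0$ and $f(x) = f(y) = r^2$. The key step is to test $f$ against the midpoint $z := \tfrac{1}{2}(x+y)$: by linearity $f(z) = \tfrac{1}{2}(f(x)+f(y)) = r^2$, while the operator-norm inequality gives $f(z) \le \|f\|\,\|z\| = r\|z\|$. If $x \ne y$ then $\tfrac{x}{r}, \tfrac{y}{r}$ are distinct points of $S_1[0]$, so strict convexity forces $\|z\| < r$, and hence $r^2 = f(z) \le r\|z\| < r^2$, a contradiction. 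Therefore $x = y$, establishing injectivity.

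For \eqref{stritem2}, note first that linear independence of $x,y$ forces both to be nonzero, and since $\varphi(x),\varphi(y)$ are assumed defined both $x,y$ lie in $\smooth$; consequently $f := \varphi(x)$ and $g := \varphi(y)$ are nonzero, as $\|f\| = \|x\| > 0$ and $\|g\| = \|y\| > 0$. Suppose for contradiction that $f,g$ are linearly dependent. Then $g = \lambda f$ for some scalar $\lambda \ne 0$. Using the homogeneity $\varphi(\lambda x) = \lambda\varphi(x)$ (valid since $\lambda x$ is again smooth), we obtain $\varphi(\lambda x) = \lambda f = g = \varphi(y)$, and the injectivity proved in \eqref{stritem1} then yields $y = \lambda x$. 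This contradicts the linear independence of $x,y$, so $f,g$ must be linearly independent.

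The main obstacle, and the only place strict convexity enters, is the midpoint estimate in \eqref{stritem1}: one must rule out the possibility that the segment $[x,y]$ stays on the sphere $S_r[0]$, which is precisely what strict convexity forbids. I do not anticipate any difficulty in \eqref{stritem2} beyond checking that $\lambda x$ remains a smooth point so that homogeneity applies, which is immediate from the scaling behaviour of support functionals.
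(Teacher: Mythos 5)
Your proof is correct and follows essentially the same route as the paper: part (\ref{stritem1}) is the paper's argument specialised to the midpoint $t=\tfrac12$ (the paper tests $\varphi(x)$ against a general convex combination $tx+(1-t)y$ after normalising $\|x\|=\|y\|=1$, but the contradiction with strict convexity is identical), and part (\ref{stritem2}) is exactly the paper's homogeneity-plus-injectivity deduction. Your handling of the edge cases ($f=0$, and the nonvanishing of $f,g$ in part (\ref{stritem2})) is slightly more careful than the paper's, but this is cosmetic rather than a different method.
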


\begin{proof}
(\ref{stritem1}): Suppose $\varphi(x) = \varphi(y)$ for $x \neq y$, then $\| x \|=\|y\|$; as $\varphi$ is homogenous we may assume without loss of generality that $\|x\|= \|y\|=1$. For all $t \in (0,1)$ we have 
\begin{align*}
1= t\varphi(x)x +(1-t)\varphi(y) y =\varphi(x)(tx+(1-t)y) \leq \|tx +(1-t)y\| ,
\end{align*}
thus $X$ is not strictly convex. 

(\ref{stritem2}): Suppose $\varphi(x), \varphi(y)$ are linearly dependent, then $\varphi(x) = c\varphi(y)$ for some $c \in \mathbb{R}$. As $\varphi$ is homogenous it follows $\varphi(x) = \varphi(c y)$, thus by part \ref{stritem1}, $x = cy$ as required.
\end{proof}

As every point has at least one support functional we shall define for each $x \in X$ the set $\varphi[x]$ of support functionals of $x$; note that $x$ is smooth if and only if $|\varphi[x]|=1$.

\begin{proposition}\label{supportset}
For any $x \in X \setminus \{0\}$ the following holds:
\begin{enumerate}[(i)]
\item \label{supportsetitem1} $\varphi[x]$ is a compact and convex subset of $S^*_{\|x\|}[0]$.

\item \label{supportsetitem2} If $\dim X =2$ then $\varphi[x] =[f,g]$ for some $f,g \in X^*$ and $x \in \smooth$ if and only if $f = g$.
\end{enumerate}
\end{proposition}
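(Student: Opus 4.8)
The plan is to treat the two parts separately, with part (i) being a routine verification from the definition and part (ii) resting on a dimension-counting observation special to the plane.

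For part (i), I would unwind the definition $\varphi[x] = \{ f \in X^* : \|f\| = \|x\| \text{ and } f(x) = \|x\|^2 \}$. Containment in $S^*_{\|x\|}[0]$ is immediate from the condition $\|f\| = \|x\|$. For convexity, given $f,g \in \varphi[x]$ and $t \in [0,1]$, set $h = tf + (1-t)g$; then $h(x) = \|x\|^2$ by linearity and $\|h\| \leq \|x\|$ by the triangle inequality, and the sandwich $\|x\|^2 = h(x) \leq \|h\|\,\|x\| \leq \|x\|^2$ forces $\|h\| = \|x\|$, so $h \in \varphi[x]$. For compactness, both defining conditions are closed, since $f \mapsto \|f\|$ and $f \mapsto f(x)$ are continuous, so $\varphi[x]$ is a closed subset of $X^*$; it is bounded because it sits on a sphere; and closed bounded subsets of the finite-dimensional space $X^*$ are compact by Heine--Borel.

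For part (ii), the key point is that when $\dim X = 2$ the dual $X^*$ is also two-dimensional, so the sphere $S^*_{\|x\|}[0]$ is a one-dimensional curve with empty interior in $X^*$: no ball can lie on it, since $\|tf\| = t\|f\|$ is strictly increasing along each ray from $0$. As $\varphi[x]$ is convex and contained in this sphere, it cannot contain three affinely independent functionals, for their convex hull would be a nondegenerate triangle with nonempty interior, impossible on the sphere. Hence $\varphi[x]$ has affine dimension at most one, i.e. it lies on an affine line in $X^*$. A nonempty compact convex subset of a line is precisely a closed segment $[f,g]$ (allowing $f = g$), and $\varphi[x]$ is nonempty by Hahn--Banach and compact by part (i). Finally $[f,g]$ is a singleton exactly when $f = g$, and by definition $x \in \smooth$ iff $|\varphi[x]| = 1$, which gives the stated equivalence.

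The only genuine obstacle is the geometric claim in part (ii) that a convex subset of the sphere is contained in a line; everything else is bookkeeping. I expect to justify the empty-interior fact from homogeneity of the norm and to phrase the ``at most affine dimension one'' step either via the triangle argument above or, equivalently, by invoking the fact that a planar convex set with empty interior is contained in an affine line.
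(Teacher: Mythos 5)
Your proposal is correct and follows essentially the same route as the paper: part (i) via the triangle inequality, linearity, and closedness inside the compact sphere $S^*_{\|x\|}[0]$, and part (ii) by observing that a compact convex subset of the one-dimensional sphere in the two-dimensional dual must be a segment. In fact you make explicit two points the paper leaves implicit --- the sandwich $\|x\|^2 = h(x) \leq \|h\|\,\|x\|$ forcing $\|h\| = \|x\|$ in the convexity step, and the empty-interior/affine-dimension argument behind the claim that a convex subset of the sphere is a line segment --- so your write-up is, if anything, more complete.
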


\begin{proof}
(\ref{supportsetitem1}): For each $f \in \varphi[x]$ we have $\|f \|=\|x\|$ by definition thus $\varphi[x] \subset S^*_{\| x\|}[0]$. Given $f,g \in \varphi[x]$ and $t \in [0,1]$ we note that $(t f +(1-t)g)(x) = \|x\|^2$ and 
\begin{align*}
\|t f +(1-t)g\| \leq t\|f \| + (1-t)\|g\| = \|x\|,
\end{align*}
thus $t f +(1-t)g \in \varphi[x]$ and $\varphi[x]$ is convex. Finally if $(f_n)_{n \in \mathbb{N}}$ is a convergent sequence of support functionals of $x$ with limit $f$ then $\|f\|= \|x\|$ and $f(x) = \lim_{n \rightarrow \infty}f_n(x) = \|x\|^2$ thus $f \in \varphi[x]$; since this implies $\varphi[x]$ is a closed subset of the compact set $S^*_{\|x\|}[0]$ then it too is compact.

(\ref{supportsetitem2}): If $x$ is smooth then $\varphi[x] = \{ \varphi(x) \} = [ \varphi(x), \varphi(x)]$. Suppose $x$ is not smooth, then by \ref{supportsetitem1}, $\varphi[x]$ is a compact convex subset of the $1$-dimensional manifold $S^*_{\|x\|}[0]$, and hence is a line segment.
\end{proof}

We define for $S_1[0]$ the \textit{(inner) L\"{o}wner ellipsoid} $S$ of $S_1[0]$, the unique convex body of maximal volume bounded by $S_1[0]$ which has a Minkowski functional $\| \cdot \|_S: X \rightarrow \mathbb{R}_{\geq 0}$ that can be induced by an inner product. It is immediate that $\|x\|_S \geq \|x\|$ for all $x \in X$ and the Euclidean space $(X,\| \cdot \|_S)$ has unit sphere $S$. For more information on L\"{o}wner ellipsoids see \cite[Chapter 3.3]{minkowski}.

\begin{proposition}\label{abc}
Suppose $\dim X \geq 2$. For all $x \in S_1[0] \cap \smooth$ there exists $y \in S_1[0] \cap \smooth$ such that $x \neq y$ and $\varphi(x), \varphi(y)$ are linearly independent.
\end{proposition}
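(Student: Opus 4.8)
The plan is to argue by contradiction, using the continuity of the single fixed functional $\varphi(x)$ together with the connectedness of the unit sphere. Suppose the conclusion fails for some smooth $x \in S_1[0]$, i.e.\ for every smooth $y \in S_1[0]$ with $y \neq x$ the functionals $\varphi(x)$ and $\varphi(y)$ are linearly dependent. First I would reduce linear dependence to a sign: if $\varphi(y) = c\,\varphi(x)$, then since $\|\varphi(y)\| = \|y\| = 1 = \|x\| = \|\varphi(x)\|$ we must have $|c| = 1$ and hence $\varphi(y) = \pm\varphi(x)$. Writing $g := \varphi(x)$ and using $\varphi(y)(y) = \|y\|^2 = 1$, this yields $g(y) \in \{-1,1\}$ for every smooth $y \in S_1[0]$ (the case $y = x$ giving $g(x) = 1$).

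Next I would promote this from the smooth points to the whole sphere. Because support functionals are homogeneous the set $\smooth$ is a cone, and by Proposition \ref{paper1}(\ref{paper1item0}) it is dense in $X$. From this I would deduce that $S_1[0] \cap \smooth$ is dense in $S_1[0]$: otherwise some relatively open subset $V$ of $S_1[0]$ would contain no smooth point, and then the open cone $\{t v : t > 0,\ v \in V\}$ would be an open subset of $X \setminus \{0\}$ containing no smooth points, contradicting density. Since $g$ is a continuous linear functional with $|g(y)| \le \|g\|\,\|y\| = 1$ on $S_1[0]$, and $|g(y)| = 1$ on this dense set, continuity forces $g(y) \in \{-1,1\}$ for every $y \in S_1[0]$.

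Finally I would invoke connectedness. For $\dim X \geq 2$ the sphere $S_1[0]$ is connected (it is homeomorphic to a Euclidean sphere via radial projection), so the continuous function $g$ taking values in the discrete set $\{-1,1\}$ must be constant. This contradicts $g(x) = \|x\|^2 = 1$ together with $g(-x) = -1$, where $-x \in S_1[0]$ and $-x \neq x$; this contradiction completes the argument.

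I expect the main obstacle to be the middle step: passing carefully from density of $\smooth$ in $X$ (which is what Proposition \ref{paper1} supplies) to density of $S_1[0] \cap \smooth$ in the sphere, and then combining this with the bound $|g| \le 1$ so that continuity pins the value to $\{-1,1\}$ everywhere. The cone/homeomorphism observation handles this cleanly, and the inner L\"{o}wner ellipsoid is not needed for this particular statement, although it would provide an alternative, more geometric supply of contact points between a Euclidean sphere and $S_1[0]$ if one preferred that route.
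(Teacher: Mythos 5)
Your proof is correct, and it takes a genuinely different route from the paper. The paper's argument is constructive and geometric: it invokes the inner L\"{o}wner ellipsoid $S$ of $S_1[0]$ and the cited fact that there are contact points $y_1, \ldots, y_d \in S_1[0]$ lying on $S$; since $\| \cdot \|_S \geq \| \cdot \|$ with equality at the contact points, any support functional there is also a support functional for the Euclidean norm $\| \cdot \|_S$, so each $y_i$ is smooth and $\varphi(y_1), \ldots, \varphi(y_d)$ are linearly independent, and one then selects a suitable $y_i$ according to whether $x$ coincides with one of them. Your argument is instead a purely topological proof by contradiction: linear dependence together with $\|\varphi(y)\| = \|y\| = 1$ forces $\varphi(y) = \pm \varphi(x)$, hence $|g(y)| = 1$ on $S_1[0] \cap \smooth$ where $g := \varphi(x)$; density of $\smooth$ in $X$ (Proposition \ref{paper1}) upgrades, via the cone observation, to density of $S_1[0] \cap \smooth$ in $S_1[0]$; continuity then gives $|g| \equiv 1$ on all of $S_1[0]$, so $g$ is a continuous map from the connected set $S_1[0]$ (connected precisely because $\dim X \geq 2$) into the discrete set $\{-1,1\}$ taking both values $g(x) = 1$ and $g(-x) = -1$, a contradiction. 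The step you flagged as the main obstacle is indeed handled correctly: $\smooth$ is a two-sided cone because $\varphi$ is homogeneous, and the open cone over a relatively open subset of the sphere is open in $X$ (radial projection $z \mapsto z/\|z\|$ is continuous on $X \setminus \{0\}$), so a sphere-neighbourhood free of smooth points would contradict density. As for what each approach buys: the paper's route yields the stronger conclusion that there exist $d = \dim X$ smooth unit vectors whose support functionals form a basis of $X^*$, a reusable fact, at the cost of importing the nontrivial contact-point lemma for the L\"{o}wner ellipsoid; your route is self-contained and more elementary, using only Proposition \ref{paper1} and connectedness of the sphere, but it produces only the pairwise linear independence asserted in the proposition, and (as stated) it is intrinsically two-point, so it would not directly generalise to produce a full independent family.
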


\begin{proof}
By \cite[Lemma 6.1]{euclidean} there exists $y_1, \ldots, y_d \in S_1[0]$ that lie on the L\"{o}wner ellipsoid $S$ of $S_1[0]$. Suppose $f_i$ is a support functional for $y_i$ with respect to $\| \cdot \|$ and choose any $x \in S$. As $S \subset B_1[0]$ (the unit ball of $(X,\| \cdot \|)$) then $|f_i(x)| \leq 1$, thus $f$ is a support functional for $y_i$ with respect to $\| \cdot \|_S$ also. As $(X, \| \cdot \|_S)$ is Euclidean then it follows that $y_1, \ldots, y_d$ are smooth and $\varphi(y_1), \ldots, \varphi(y_d)$ are linearly independent.

If $x = y_i$ for some $i=1, \ldots, d$ then there exists $j\neq i$ such that $x \neq y_j$ and we let $y  = y_j$. If $x \neq y_i$ for all $i = 1, \ldots, d$ then $\varphi(x)$ has to be linearly independent to some $\varphi(y_i)$ and we let $y = y_i$.
\end{proof}

\subsection{Isometries of Euclidean and non-Euclidean planes} \label{Isometries of Euclidean and non-Euclidean planes}

We shall define $\Iso (X, \| \cdot \|)$ to be the \textit{group of isometries} of $(X, \| \cdot \|)$ and $\Isolin (X, \| \cdot \|)$ to be the \textit{group of linear isometries} of $X$ with the group actions being composition; we shall denote these as $\Iso (X)$ and $\Isolin (X)$ if there is no ambiguity. It can be seen by Mazur-Ulam's theorem \cite{minkowski} that all isometries of a finite dimensional normed space are affine i.e.~each isometry is the unique composition of a linear isometry followed by a translation, thus $\Iso (X)$ has the topology inherited from $A(X)$. 

It follows from the Closed Subgroup theorem \cite[Theorem 5.1.14]{manifold} that for any normed space the group of isometries is a \textit{Lie group} (a smooth finite dimensional manifold with smooth group operations) while the group of linear isometries is a compact Lie group since it is closed and bounded in $L(X)$. We denote by $T_\iota \Iso (X)$ the tangent space of the smooth manifold $\Iso (X)$ at the identity map $\iota:X \rightarrow X$.

For 2-dimensional normed spaces we can immediately categorize $\Iso (X)$ into one of two possibilities.

\begin{proposition}\label{paperiso1}
Let $X$ be a normed plane, then the following holds:
\begin{enumerate}[(i)]
\item \label{paperiso1item1} If $X$ is Euclidean then there are infinitely many linear isometries of $X$ and $T_\iota \Iso (X) = \spann \{ T_1, T_2, T_0\}$ where $T_1, T_2$ are linearly independent translations and $T_0$ is a linear map.

\item \label{paperiso1item2} If $X$ is non-Euclidean then there are a finite amount of linear isometries of $X$ and $T_\iota \Iso (X) = \spann \{ T_1, T_2\}$ where $T_1, T_2$ are linearly independent translations. 
\end{enumerate}
\end{proposition}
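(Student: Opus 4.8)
The plan is to reduce the whole classification to understanding the compact group $\Isolin(X)$ sitting inside an orthogonal group, using the Mazur--Ulam structure already invoked in the preliminaries. Since every isometry of $X$ is affine, each $\alpha \in \Iso(X)$ factors uniquely as a translation composed with a linear isometry, so $\Iso(X)$ is the semidirect product $\Isolin(X) \ltimes X$ as Lie groups. Consequently the tangent space at $\iota$ splits as the Lie algebra of $\Isolin(X)$ (whose elements are genuine \emph{linear} maps $x \mapsto Ax$) together with the tangent space of the translation subgroup. The translation part consists of the constant infinitesimal translations $x \mapsto v$, a $2$-dimensional space, so I would immediately fix a basis $T_1, T_2$ of linearly independent translations. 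Everything then comes down to computing $\dim \Isolin(X)$.

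Next I would exploit compactness of $\Isolin(X)$: averaging an arbitrary inner product against normalised Haar measure yields an inner product $\langle \cdot, \cdot \rangle$ invariant under every linear isometry, so $\Isolin(X)$ is a closed subgroup of the orthogonal group $O(2)$ of $\langle \cdot, \cdot \rangle$. I would then recall the elementary classification of closed subgroups of $O(2)$: each is either finite (cyclic or dihedral) or contains the full rotation group $SO(2)$, the latter occurring exactly when the subgroup is infinite. Thus $\Isolin(X)$ is finite unless it contains every $\langle \cdot, \cdot \rangle$-rotation.

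The crux, and what I expect to be the main obstacle, is showing that $\Isolin(X) \supseteq SO(2)$ forces $X$ to be Euclidean. Suppose every $\langle \cdot, \cdot \rangle$-rotation preserves $\| \cdot \|$. For any $x \neq 0$ the $SO(2)$-orbit of $x$ is the entire $\langle \cdot, \cdot \rangle$-ellipse $\{ y : \langle y, y \rangle = \langle x, x \rangle \}$, on which $\| \cdot \|$ is therefore constant. Since both $S_1[0]$ and any origin-centred ellipse meet each ray from $0$ in exactly one point, two distinct concentric ellipses cannot simultaneously lie inside $S_1[0]$; hence $S_1[0]$ must coincide with a single such ellipse. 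By homogeneity this makes $\| \cdot \|$ a scalar multiple of the norm of $\langle \cdot, \cdot \rangle$, so $X$ is Euclidean, contradicting the hypothesis of part (ii). This argument shows that for non-Euclidean $X$ the group $\Isolin(X)$ is finite, its Lie algebra is trivial, and therefore $T_\iota \Iso(X) = \spann\{T_1, T_2\}$.

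Finally, for the Euclidean case (i), one has $\Isolin(X) = O(2)$, which is infinite and one-dimensional, with Lie algebra spanned by the skew generator $x \mapsto T_0 x$, where $T_0 = \left(\begin{smallmatrix} 0 & -1 \\ 1 & 0 \end{smallmatrix}\right)$ is the velocity at $\iota$ of the rotation one-parameter group; this $T_0$ is a linear map, giving $T_\iota \Iso(X) = \spann\{T_1, T_2, T_0\}$. Beyond the forcing step above, the secondary point requiring care is the bookkeeping that identifies the abstract splitting $\Isolin(X) \ltimes X$ with the claimed basis: I would verify that, viewed inside $A(X)$, the translation directions really do appear as the constant affine maps $T_1, T_2$ and the rotation generator appears as the linear map $T_0$, so that the description in the statement matches exactly the decomposition produced here.
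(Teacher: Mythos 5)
Your proposal is correct, and its overall skeleton matches the paper's: both use Mazur--Ulam to write every isometry uniquely as a translation composed with a linear isometry, so that $T_\iota \Iso(X)$ splits into the translation directions plus the Lie algebra of the compact group $\Isolin(X)$, reducing everything to the size of $\Isolin(X)$. Where you genuinely diverge is at the crux: the paper does not prove that a non-Euclidean normed plane has only finitely many linear isometries --- it simply cites this as a known remark from Thompson's \emph{Minkowski geometry} (pg.~83) and immediately concludes that the tangent space at $\iota$ consists only of translations. You instead prove that fact from scratch: Haar-averaging an inner product to embed $\Isolin(X)$ as a closed subgroup of $O(2)$, invoking the classification of closed subgroups of $O(2)$ (finite, or containing $SO(2)$), and then running the orbit argument showing that $SO(2) \subseteq \Isolin(X)$ forces the unit sphere to be a single inner-product circle, i.e.~forces $X$ Euclidean. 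Each step of that chain is sound (compactness of $\Isolin(X)$ is already established in the paper's preliminaries, and your ray-intersection argument ruling out two concentric orbits inside $S_1[0]$ is airtight). What your route buys is a self-contained proof exhibiting exactly where non-Euclideanness enters, via the dichotomy for closed subgroups of $O(2)$; what the paper's route buys is brevity, outsourcing precisely this finiteness statement to the literature. Your Euclidean case (i) likewise computes $\Isolin(X)=O(2)$ and its one-dimensional Lie algebra directly, whereas the paper disposes of it by isometric isomorphism with the standard Euclidean plane; these are equivalent in content.
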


\begin{proof}
(\ref{paperiso1item1}): As all Hilbert spaces of the same dimension are isometrically isomorphic then $X$ is isometrically isomorphic to the Euclidean plane and the result follows.

(\ref{paperiso1item2}): As remarked in \cite[pg. 83]{minkowski} there are only finitely many linear isometries $\iota:=L_0$, $L_1, \ldots, L_n$ of $X$ and so by Mazur-Ulam's theorem \cite[Theorem 3.1.2]{minkowski} we have
\begin{align*}
\Iso(X) = \{ T_x \circ L_i : x \in X, ~ i=0, \ldots, n \}
\end{align*}
where $T_x(y) = x+y$ for all $y \in X$. We can now see that the tangent space at $\iota$ is exactly the space of translations and the result follows.
\end{proof}

\section{Framework and graph rigidity} \label{Framework and graph rigidity}

\subsection{Frameworks} \label{Frameworks}

We shall assume that all graphs are finite and simple i.e.~no loops or parallel edges. We will denote $V(G)$ and $E(G)$ to be the vertex and edge sets of $G$ respectively. If $H$ is a subgraph of $G$ we will represent this by $H \subseteq G$. For a set $S$ we shall denote by $K_S$ the complete graph on the set $S$; alternatively we will denote $K_n$ to be the complete graph on $n$ vertices ($n \in \mathbb{N}$). For any set $S \subset V(G)$ we denote by $G[S]$ the induced subgraph of $G$ on $S$. 

Let $X$ be a normed space. We define a \textit{(bar-joint) framework} to be a pair $(G,p)$ where $G$ is a graph and $p \in X^{V(G)}$; we shall refer to $p$ as a \textit{placement of $G$}. For all $X$ and $G$ we will gift $X^{V(G)}$ and $\mathbb{R}^{E(G)}$ the following norms:
\begin{align*}
\|\cdot \|_{V(G)} : (x_v)_{v \in V(G)} \mapsto \max_{v \in V(G)} \|x_v\| \qquad  \|\cdot \|_{E(G)} : (a_e)_{e \in E(G)} \mapsto \max_{e \in E(G)} \|a_e\|.
\end{align*}
For $x \in X^{V(G)}$, $a \in \mathbb{R}^{E(G)}$, and $H \subset G$ we define $x|_{H} := (x_v)_{v \in V(H)} \in X^{V(H)}$ and $a|_{H} := (a_e)_{e \in E(H)} \in \mathbb{R}^{E(H)}$.

A placement $p$ is in \textit{general position} if for any choice of distinct vertices $v_0, v_1, \ldots, v_n \in V(G)$ ($n \leq \dim X$) the set $\{p_{v_i} : i = 0,1 ,\ldots,n\}$ is affinely independent. For any graph $G$ we let $\mathcal{G}(G)$ be the set of placements of $G$ in general position. As $\mathcal{G}(G)$ is the complement of an algebraic set then $\mathcal{G}(G)$ is an open dense subset of $X^{V(G)}$ and $\mathcal{G}(G)^c$ has measure zero with respect to the Lebesgue measure of $X^{V(G)}$.

For frameworks $(H,q)$ and $(G,p)$ we say $(H,q)$ is a \textit{subframework} of $(G,p)$ (or $(H,q) \subseteq (G,p)$) if $H \subseteq G$ and $p_v = q_v$ for all $v \in V(H)$. If $H$ is also a spanning subgraph we say that $(H,q)$ is a \textit{spanning subframework} of $(G,p)$.

\subsection{The rigidity map and rigidity matrix} \label{The rigidity map and rigidity matrix}

We say that an edge $vw \in E(G)$ of a framework $(G,p)$ is \textit{well-positioned} if $p_v -p_w \in \smooth$; if this holds we define $\varphi_{v,w} := \varphi \left(\frac{p_v-p_w}{\|p_v-p_w\|} \right)$ to be the \textit{support functional of $vw$ for $p$}. If our placement has a superscript, i.e.~$p^\delta$, then we will define $\varphi^\delta_{v,w}$ to be the support functional of the edge $vw$ for $p^\delta$ (if it is well-positioned). If all edges of $(G,p)$ are well-positioned we say that $(G,p)$ is \textit{well-positioned} and $p$ is a \textit{well-positioned placement} of $G$. We shall denote the subset of well-positioned placements of $G$ in $X$ by the set $\mathcal{W}(G)$. 

\begin{lemma}\cite[Lemma 4.1]{mypaper}\label{wellpos}
The set $\mathcal{W}(G)$ is dense in $X^{V(G)}$ and $\mathcal{W}(G)^c$ has measure zero with respect to the Lebesgue measure of $X^{V(G)}$.
\end{lemma}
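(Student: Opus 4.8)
The plan is to reduce the statement to the fact already recorded in Proposition \ref{paper1}(\ref{paper1item0}), namely that $\smooth^c$ is a null set in $X$. I would first observe that a placement is well-positioned exactly when every one of its finitely many edges is well-positioned, so its failure is witnessed by a single edge:
\begin{align*}
\mathcal{W}(G)^c = \bigcup_{vw \in E(G)} B_{vw}, \qquad B_{vw} := \left\{ p \in X^{V(G)} : p_v - p_w \in \smooth^c \right\}.
\end{align*}
Since $E(G)$ is finite and a finite union of null sets is null, it suffices to prove that each $B_{vw}$ has measure zero in $X^{V(G)}$.

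To treat a single edge $vw$, I would pass to the linear change of coordinates on $X^{V(G)}$ that replaces the $v$-coordinate $p_v$ by the difference $p_v - p_w$ and leaves every other coordinate fixed. Identifying $X \cong \mathbb{R}^{\dim X}$, this is a block-triangular linear automorphism of $X^{V(G)}$ with identity blocks on the diagonal, hence has Jacobian of absolute value $1$ and carries null sets to null sets. Note that $B_{vw}$ is measurable, being the preimage of the measurable set $\smooth^c$ under the continuous linear map $p \mapsto p_v - p_w$. In the new coordinates $B_{vw}$ is precisely the product $\smooth^c \times X^{V(G) \setminus \{v\}}$, and since $\smooth^c$ is null in $X$ by Proposition \ref{paper1}(\ref{paper1item0}), Fubini's theorem shows this product is null; transporting back by the volume-preserving coordinate change gives that $B_{vw}$ is null. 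Ranging over the finitely many edges then shows that $\mathcal{W}(G)^c$ has Lebesgue measure zero.

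The density of $\mathcal{W}(G)$ is then a formal consequence of this: a set of measure zero has empty interior, so every nonempty open subset of $X^{V(G)}$, having positive Lebesgue measure, cannot be contained in $\mathcal{W}(G)^c$ and therefore meets $\mathcal{W}(G)$. In this argument the entire geometric input is the null-ness of $\smooth^c$ supplied by Proposition \ref{paper1}; I expect the only genuine (and mild) technical point to be the bookkeeping of the triangular change of variables and the clean application of Fubini, rather than any substantive difficulty.
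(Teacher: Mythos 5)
Your proof is correct. Note that the paper does not prove this lemma itself---it cites \cite[Lemma 4.1]{mypaper}---but its proof of the companion Lemma \ref{wellpos2} runs along exactly the lines you propose: decompose the complement of the well-positioned placements edge-by-edge, use that a finite union of null sets is null, and reduce the single-edge case to Proposition \ref{paper1}(\ref{paper1item0}); your block-triangular change of variables and Fubini argument simply makes explicit the single-edge step that the paper treats as immediate.
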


We can extend this result to placements where we fix some subset of points.

\begin{lemma}\label{wellpos2}
Let $\emptyset \neq V \subsetneq V(G)$ and $p \in X^V$ chosen such that $p_v - p_w \in \smooth$ for all $vw \in E(G)$, $v,w \in V$. Then the set
\begin{align*}
\mathcal{W}(G)_V := \{ q \in X^{V(G) \setminus V} : q \oplus p \in \mathcal{W}(G) \}
\end{align*}
is dense in $X^{V(G) \setminus V}$ and $\mathcal{W}(G)_V^c$ has measure zero with respect to the Lebesgue measure of $X^{V(G) \setminus V}$.
\end{lemma}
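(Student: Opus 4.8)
The plan is to mirror the proof of Lemma \ref{wellpos}, but carried out on the slice of placement space where the coordinates indexed by $V$ are frozen to $p$. First I would observe that $(G, q \oplus p)$ is well-positioned precisely when every edge difference lies in $\smooth$, and I would split $E(G)$ into three classes according to how many endpoints lie in $V$. For an edge $vw$ with both endpoints in $V$ the corresponding difference is $p_v - p_w$, which lies in $\smooth$ by hypothesis and is independent of $q$; such edges therefore impose no constraint. It thus suffices to control the edges with at least one endpoint in $V(G) \setminus V$.

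For each such edge $e$ I would introduce the affine edge-difference map $\pi_e : X^{V(G) \setminus V} \rightarrow X$: if $e = vw$ with $v \in V(G) \setminus V$ and $w \in V$ then $\pi_e(q) = q_v - p_w$, while if both $v,w \in V(G) \setminus V$ then $\pi_e(q) = q_v - q_w$. With this notation the set of placements for which $e$ fails to be well-positioned is exactly $\pi_e^{-1}(\smooth^c)$, and hence $\mathcal{W}(G)_V^c = \bigcup_e \pi_e^{-1}(\smooth^c)$, a finite union over the edges $e$ of the two remaining classes.

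The crux is to show that each $\pi_e^{-1}(\smooth^c)$ is null in $X^{V(G)\setminus V}$, upgrading the fact that $\smooth^c$ is null (Proposition \ref{paper1}(\ref{paper1item0})). The key observation is that $\pi_e$ factors through a single copy of $X$: it depends only on $q_v$ (when $e$ has one endpoint in $V$) or only on $q_v - q_w$ (when both endpoints lie outside $V$). Fixing every coordinate of $q$ other than $q_v$, the resulting partial map $q_v \mapsto \pi_e(q)$ is in each case a translation of $X$, so the set of bad $q_v$ is a translate of $\smooth^c$ and hence null in that copy of $X$. Fubini's theorem then yields that $\pi_e^{-1}(\smooth^c)$ is null in the product, and a finite union of null sets is null, so $\mathcal{W}(G)_V^c$ has measure zero. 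Since a null subset of a finite-dimensional space has empty interior, its complement $\mathcal{W}(G)_V$ is dense, which completes both assertions.

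I do not expect a genuine obstacle here: the argument is essentially the measure-theoretic bookkeeping behind Lemma \ref{wellpos}, restricted to a coordinate slice. The only point requiring care is the Fubini step, namely verifying that taking preimages under the higher-dimensional maps $\pi_e$ does not destroy the null-set property; this is settled cleanly once one notes that each $\pi_e$ depends on a single copy of $X$, reducing the claim to the one-factor translation-invariance of $\smooth^c$.
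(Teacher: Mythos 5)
Your proposal is correct and takes essentially the same approach as the paper: the paper's proof is an induction on the number of edges whose content is exactly your decomposition of $\mathcal{W}(G)_V^c$ into a finite union of per-edge null sets, each handled via the measure-zero property of $\smooth^c$ from Proposition \ref{paper1}. The differences are purely presentational --- the paper packages the finite union as an induction (and defers edges lying entirely outside $V$ to Lemma \ref{wellpos}), while you spell out the translation/Fubini step that the paper treats as immediate.
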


\begin{proof}
If $G$ has one edge the result can be seen to immediately follow from part \ref{paper1item0} of Lemma \ref{paper1}. Suppose this holds for all graphs with $n-1$ edges and let $G$ be a graph with $n$ edges. If there exists no edge connecting $V$ and $V(G) \setminus V$ then $\mathcal{W}(G)_V = \mathcal{W}(G[V(G)\setminus V])$ and so the result follows from Lemma \ref{wellpos}. Suppose there exists $vw \in E(G)$ such that $v \in V$ and $w \in V(G) \setminus V$. Define $G_1,G_2$ to be the subgraphs of $G$ where $V(G_1)=V(G_2)=V(G)$, $E(G_1) := E(G) \setminus \{vw\}$ and $E(G_2) := \{vw\}$. By assumption $\mathcal{W}(G_1)_V^c$ and $\mathcal{W}(G_2)_V^c$ have measure zero, thus as $\mathcal{W}(G)_V^c = \mathcal{W}(G_1)_V^c \cup \mathcal{W}(G_2)_V^c$ then it too has measure zero. As the complement of a measure zero set is dense the result follows by induction.
\end{proof}

We define the \textit{rigidity operator} of $G$ at $p$ in $X$ to be the continuous linear map
\begin{align*}
df_G(p): X^{V(G)} \rightarrow \mathbb{R}^{E(G)}, ~ x = (x_v)_{v \in V(G)} \mapsto (\varphi_{v,w}(x_v - x_w ))_{vw \in E(G)}.
\end{align*}

\begin{lemma}\cite[Lemma 4.3]{mypaper}\label{rigopcont}
The map
\begin{align*}
df_G : \mathcal{W}(G) \rightarrow L(X^{V(G)}, \mathbb{R}^{E(G)}), ~ x \mapsto df_G(x)
\end{align*}
is continuous.
\end{lemma}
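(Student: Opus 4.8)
The plan is to reduce the continuity of the operator-valued map $df_G$ to the continuity of the individual support functionals attached to each edge, and then to obtain the latter from the continuity of the dual map $\varphi$ (part \ref{paper1item2} of Proposition \ref{paper1}).

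First I would fix two well-positioned placements $p, p' \in \mathcal{W}(G)$, write $\varphi_{v,w}, \varphi'_{v,w}$ for the corresponding support functionals of an edge $vw$, and estimate $\|df_G(p) - df_G(p')\|_{\text{op}}$ directly. For any $x = (x_v)_{v \in V(G)} \in X^{V(G)}$, the $vw$-coordinate of $(df_G(p) - df_G(p'))(x)$ is $(\varphi_{v,w} - \varphi'_{v,w})(x_v - x_w)$, which is bounded in absolute value by $\|\varphi_{v,w} - \varphi'_{v,w}\| \, \|x_v - x_w\| \le 2\|\varphi_{v,w} - \varphi'_{v,w}\| \, \|x\|_{V(G)}$, using that $\|x_v - x_w\| \le \|x_v\| + \|x_w\| \le 2\|x\|_{V(G)}$. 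Taking the maximum over $E(G)$ and then the supremum over $\|x\|_{V(G)} \le 1$ gives
\begin{align*}
\|df_G(p) - df_G(p')\|_{\text{op}} \le 2 \max_{vw \in E(G)} \|\varphi_{v,w} - \varphi'_{v,w}\| .
\end{align*}
Hence it suffices to prove that, for each fixed edge $vw$, the map $p \mapsto \varphi_{v,w}$ from $\mathcal{W}(G)$ into $X^*$ is continuous.

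For the second step I would realise this map as a composition. The assignment $p \mapsto p_v - p_w$ is linear, hence continuous, and on $\mathcal{W}(G)$ it takes values in $\smooth$ (in particular $p_v \neq p_w$, since $0 \notin \smooth$), so the normalisation $y \mapsto y / \|y\|$ is defined and continuous there and lands in $\smooth \cap S_1[0]$, because smoothness is invariant under positive scaling (the support functional of $x$ scales to that of $ax$ for $a > 0$). Post-composing with $\varphi$, which is continuous by Proposition \ref{paper1}(\ref{paper1item2}), shows that $p \mapsto \varphi_{v,w} = \varphi\big((p_v - p_w)/\|p_v - p_w\|\big)$ is continuous. Combined with the operator-norm bound above, this yields the continuity of $df_G$ on $\mathcal{W}(G)$.

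The argument is essentially routine, so I do not expect a substantive obstacle; the only points demanding care are the bookkeeping of the product-space norms $\|\cdot\|_{V(G)}$ and $\|\cdot\|_{E(G)}$ in the operator-norm estimate, and verifying that as $p$ varies within $\mathcal{W}(G)$ the differences $p_v - p_w$ remain in the domain of $\varphi$ and stay smooth after normalisation, so that Proposition \ref{paper1}(\ref{paper1item2}) genuinely applies along the composition.
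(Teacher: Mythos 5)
Your proof is correct. The paper itself gives no proof of this lemma (it is quoted from \cite[Lemma 4.3]{mypaper}), but your argument — bounding $\|df_G(p)-df_G(p')\|_{\text{op}}$ by $2\max_{vw\in E(G)}\|\varphi_{v,w}-\varphi'_{v,w}\|$ and then obtaining continuity of each edge map $p\mapsto\varphi_{v,w}$ as the composition of the linear map $p\mapsto p_v-p_w$, normalisation on $X\setminus\{0\}$, and the continuous dual map $\varphi$ from Proposition \ref{paper1}(\ref{paper1item2}) — is exactly the natural argument such a proof takes, and all the details (the factor $2$ from the max-norms, nonvanishing of $p_v-p_w$ on $\mathcal{W}(G)$, invariance of smoothness under scaling) are handled correctly.
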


We say that a well-positioned framework $(G,p)$ is \textit{regular} if for all $q \in \mathcal{W}(G)$ we have $\rank df_G(p) \geq \rank df_G(q)$. We shall denote the subset of $\mathcal{W}(G)$ of regular placements of $G$ by $\mathcal{R}(G)$.

\begin{lemma}\cite[Lemma 4.4]{mypaper}\label{regopen2}
The set $\mathcal{R}(G)$ is a non-empty open subset of $\mathcal{W}(G)$.
\end{lemma}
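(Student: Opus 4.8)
The plan is to split the statement into its two assertions and treat each with a standard argument about the rank of a continuously-varying family of linear maps.

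For non-emptiness, I would observe that $p \mapsto \rank df_G(p)$ takes values in the finite set $\{0,1,\ldots,N\}$, where $N = \min\{\dim X^{V(G)}, |E(G)|\}$, since each $df_G(p)$ is a linear map from $X^{V(G)}$ to $\mathbb{R}^{E(G)}$. By Lemma \ref{wellpos} the domain $\mathcal{W}(G)$ is non-empty (indeed dense), so the bounded set of non-negative integers $\{\rank df_G(p) : p \in \mathcal{W}(G)\}$ attains a maximum value $r$. Any placement realising this maximum satisfies $\rank df_G(p) = r \geq \rank df_G(q)$ for every $q \in \mathcal{W}(G)$, which is precisely the definition of regularity; hence $\mathcal{R}(G) \neq \emptyset$.

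For openness, I would use the lower semicontinuity of rank together with Lemma \ref{rigopcont}. Fixing a basis of $X$ and the standard coordinates on $\mathbb{R}^{E(G)}$ identifies $L(X^{V(G)}, \mathbb{R}^{E(G)})$ with a matrix space on which each entry is a continuous linear functional. Composing these functionals with $df_G$, which is continuous on $\mathcal{W}(G)$ by Lemma \ref{rigopcont}, shows that every matrix entry of $df_G(p)$, and hence the determinant of any fixed square submatrix, depends continuously on $p \in \mathcal{W}(G)$. Now take $p \in \mathcal{R}(G)$ and set $r = \rank df_G(p)$. There is an $r \times r$ submatrix of $df_G(p)$ with non-vanishing determinant, and by the continuity just established this determinant stays non-zero for all $q$ in some neighbourhood $U \subseteq \mathcal{W}(G)$ of $p$, so $\rank df_G(q) \geq r$ on $U$. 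Since $p$ is regular, $r$ is the maximal rank over $\mathcal{W}(G)$, forcing $\rank df_G(q) \leq r$ as well; thus $\rank df_G(q) = r$ for all $q \in U$. Every such $q$ attains the maximal rank and is therefore regular, giving $U \subseteq \mathcal{R}(G)$, so $\mathcal{R}(G)$ is open in $\mathcal{W}(G)$.

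I expect this to be essentially routine, with the only delicate points being bookkeeping: ensuring the neighbourhood $U$ is taken inside $\mathcal{W}(G)$ so that $df_G$ is defined and continuous there (automatic once one works in the subspace topology and invokes Lemma \ref{rigopcont}), and confirming that the global maximality of $r$ is what upgrades the lower bound $\rank df_G(q) \geq r$ to an equality. The governing mechanism—that rank cannot drop near a point with a non-zero minor and cannot exceed its global maximum—is exactly what makes the rank locally constant precisely on $\mathcal{R}(G)$.
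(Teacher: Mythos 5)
Your proof is correct: maximality of the integer-valued rank function gives non-emptiness, and the non-vanishing minor argument combined with the continuity of $df_G$ from Lemma \ref{rigopcont} gives openness, with the global maximality of $r$ correctly invoked to upgrade $\rank df_G(q) \geq r$ to equality on the neighbourhood. The paper itself defers this statement to the cited reference rather than proving it in-text, but the argument there is this same standard lower-semicontinuity-of-rank reasoning, so your proposal matches the intended proof.
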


\begin{lemma}\label{regopen3}
The set $\mathcal{R}(G) \cap \mathcal{G}(G)$ is a non-empty open subset of $\mathcal{W}(G)$.
\end{lemma}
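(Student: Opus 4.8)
The plan is to split the statement into its two assertions—relative openness and non-emptiness—and to handle non-emptiness by a measure-theoretic argument that circumvents the fact that $\mathcal{W}(G)$ need not be open in $X^{V(G)}$.

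For openness I would argue entirely in the subspace topology of $\mathcal{W}(G)$. Since $\mathcal{G}(G)$ is open in $X^{V(G)}$, its trace $\mathcal{G}(G) \cap \mathcal{W}(G)$ is open in $\mathcal{W}(G)$. By Lemma \ref{regopen2} the set $\mathcal{R}(G)$ is open in $\mathcal{W}(G)$, and since $\mathcal{R}(G) \subseteq \mathcal{W}(G)$ I may write $\mathcal{R}(G) \cap \mathcal{G}(G) = \mathcal{R}(G) \cap (\mathcal{G}(G) \cap \mathcal{W}(G))$. This exhibits the set as an intersection of two relatively open subsets of $\mathcal{W}(G)$, hence relatively open, which settles the topological half of the claim.

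For non-emptiness the idea is to promote relative openness to positive Lebesgue measure. By Lemma \ref{regopen2}, $\mathcal{R}(G)$ is a non-empty set open in $\mathcal{W}(G)$, so $\mathcal{R}(G) = U \cap \mathcal{W}(G)$ for some non-empty open $U \subseteq X^{V(G)}$; in particular $U$ has positive Lebesgue measure. Because $\mathcal{W}(G)^c$ has measure zero (Lemma \ref{wellpos}), the set $\mathcal{R}(G) = U \setminus (U \cap \mathcal{W}(G)^c)$ differs from $U$ by a null set and therefore also has positive measure. Finally, since $\mathcal{G}(G)^c$ has measure zero, removing it cannot destroy positivity: $\mathcal{R}(G) \cap \mathcal{G}(G) = \mathcal{R}(G) \setminus (\mathcal{R}(G) \cap \mathcal{G}(G)^c)$ still has positive measure and is thus non-empty.

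I expect the only genuinely delicate point to be this passage from relative openness in $\mathcal{W}(G)$ to positive measure, since $\mathcal{W}(G)$ itself is merely dense with null complement rather than open, so one cannot directly assert that $\mathcal{R}(G)$ is open in the ambient space $X^{V(G)}$. The measure bookkeeping above is precisely what avoids that assumption; everything else is a routine combination of the density and null-complement statements already recorded in Lemmas \ref{wellpos} and \ref{regopen2} together with the stated openness and null-complement properties of $\mathcal{G}(G)$.
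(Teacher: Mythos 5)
Your proof is correct, and while the openness half coincides with the paper's argument (intersection of the relatively open sets $\mathcal{R}(G)$ and $\mathcal{G}(G) \cap \mathcal{W}(G)$ inside $\mathcal{W}(G)$), your non-emptiness argument runs in a different direction from the paper's. The paper works topologically: since $\mathcal{W}(G)^c$ and $\mathcal{G}(G)^c$ are both null, the set $\mathcal{G}(G) \cap \mathcal{W}(G)$ has null complement in $X^{V(G)}$, hence is dense there, hence is an open \emph{dense} subset of $\mathcal{W}(G)$; intersecting a dense set with the non-empty relatively open set $\mathcal{R}(G)$ from Lemma \ref{regopen2} immediately gives a non-empty set. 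You instead promote $\mathcal{R}(G)$ itself to a set of positive Lebesgue measure — writing $\mathcal{R}(G) = U \cap \mathcal{W}(G)$ with $U$ non-empty open, so that $\mathcal{R}(G)$ differs from $U$ by a subset of the null set $\mathcal{W}(G)^c$ — and then observe that deleting the null set $\mathcal{G}(G)^c$ preserves positivity. The two arguments are dual uses of the same underlying facts: the paper converts ``null complement'' into density and finishes with the purely topological principle that dense sets meet non-empty open sets, whereas you stay on the measure-theoretic side throughout, which makes the delicate point you flagged (that $\mathcal{W}(G)$ is merely co-null, not open) completely explicit at the cost of some mild measurability bookkeeping (completeness of Lebesgue measure, or an outer-measure estimate, to handle $U \cap \mathcal{W}(G)^c$). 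Both are sound; the paper's version is shorter because the density formulation lets the two null-complement hypotheses be absorbed in one step.
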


\begin{proof}
By Lemma \ref{wellpos}, $\mathcal{W}(G)^c$ has measure zero. As $\mathcal{G}(G)^c$ is an algebraic set then it is closed with measure zero, thus $\mathcal{G}(G) \cap \mathcal{W}(G)$ is dense in $X^{V(G)}$ and $\mathcal{G}(G) \cap \mathcal{W}(G)$ is an open dense subset of $\mathcal{W}(G)$. By Lemma \ref{regopen2}, $\mathcal{R}(G)$ is open in $\mathcal{W}(G)$ and so the result follows.
\end{proof}

For any well-positioned framework we can define the \textit{rigidity matrix of $(G,p)$ in $X$} to be the $|E(G)| \times |V(G)|$ matrix $R(G,p)$ with entries in the dual space $X^*$ given by
\begin{align*}
a_{e,v} := 
\begin{cases}
\varphi_{v,w}, & \text{if } e = vw \in E(G) \\
0, & \text{otherwise}
\end{cases}
\end{align*}
for all $(e,v) \in E(G) \times V(G)$. 

For any $|E(G)| \times |V(G)|$ matrix $A$ with entries in the dual space $X^*$ we may regard $A$ as the linear transform from $X^{V(G)}$ to $\mathbb{R}^{E(G)}$ given by
\begin{align*}
u \mapsto A (u) := \left( \sum_{w' \in V(G)} a_{(vw,w')}(u_{w'}) \right)_{vw \in E(G)}.
\end{align*}
By this definition we see that $A$ has row independence if and only if $A$ is surjective when considered as a linear transform. With this definition we note that $R(G,p)$ is a matrix representation of $df_G(p)$; we shall often use the notation $R(G,p)$ if we wish to observe properties involving the structure of the matrix and $df_G(p)$ if we wish to observe properties of the linear map.

\subsection{Infinitesimal rigidity and independence of frameworks} \label{Infinitesimal rigidity and independence of frameworks}

We define $u \in X^{V(G)}$ to be a \textit{trivial (infinitesimal) motion of $p$} if there exists $g \in T_\iota \Iso (X)$ such that $(g(p_v))_{v \in V(G)} =u$. For any placement $p$ we shall denote $\mathcal{T}(p)$ to be the the set all trivial infinitesimal motions of $p$. 

If $(G,p)$ is well-positioned we say that $u \in X^{V(G)}$ is an \textit{(infinitesimal) flex of $(G,p)$} if $df_G(p) u =0$; we will denote by $\mathcal{F}(G,p)$ the set of all infinitesimal flexes of $(G,p)$. The set $\mathcal{F}(G,p)$ is clearly a linear space as it is exactly the kernel of the rigidity operator. By \cite[Lemma 4.5]{mypaper} it follows $\mathcal{T}(p) \subseteq \mathcal{F}(G,p)$. We define a flex to be \textit{trivial} if it is also a trivial motion of its placement.

A well-positioned framework $(G,p)$ is \textit{infinitesimally rigid (in $X$)} if every flex is trivial and \textit{infinitesimally flexible (in $X$)} otherwise. We shall define a well-positioned $(G,p)$ framework to be \textit{independent} if the rigidity operator of $G$ at $p$, $df_G(p)$, is surjective and define $(G,p)$ to be \textit{dependent} otherwise. If a framework is infinitesimally rigid and independent we shall say that it is \textit{isostatic}. We shall use the convention that any framework with no edges (regardless of placement) is independent; this will include the \textit{null framework} $(K_0, p )$ where $V(K_0)=E(K_0)= \emptyset$. We note that $(K_1,p)$ is rigid and so with our assumptions it will be isostatic.

We have a few equivalent definitions for independence. We first define for any well-positioned framework $(G,p)$ an element $(a_{vw})_{vw \in E(G)} \in \mathbb{R}^{E(G)}$ to be a \textit{stress} of $(G,p)$ if it satisfies the \textit{stress condition} at each vertex $v \in V(G)$, i.e.
\begin{align*}
\sum_{w \in N_G(v)} a_{vw} \varphi_{v,w} =0.
\end{align*}

\begin{proposition}\label{equivind}
For any well-positioned framework the following are equivalent:
\begin{enumerate}[(i)]

\item \label{indyitem1} $(G,p)$ is independent.

\item \label{indyitem2} $R(G,p)$ has independent rows.

\item \label{indyitem3} $|E(G)| = \rank df_G(p)$.

\item \label{indyitem4} The only stress of $(G,p)$ is the zero stress i.e.~$a_{vw}=0$ for all $vw \in E(G)$.
\end{enumerate}
\end{proposition}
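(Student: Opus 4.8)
The plan is to prove the four equivalences by establishing a cycle of implications, exploiting the fact that all four statements are simply different encodings of the same linear-algebraic fact about the rigidity operator and its matrix representation $R(G,p)$. Since the excerpt has already set up that $R(G,p)$ is a matrix representation of $df_G(p)$, and that a matrix with entries in $X^*$ has independent rows if and only if it is surjective as a linear transform $X^{V(G)} \to \mathbb{R}^{E(G)}$, most of the work is bookkeeping. I would prove the chain $(\ref{indyitem1}) \Leftrightarrow (\ref{indyitem2})$, then $(\ref{indyitem2}) \Leftrightarrow (\ref{indyitem3})$, and finally $(\ref{indyitem2}) \Leftrightarrow (\ref{indyitem4})$, since each of these is a short self-contained step.

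The equivalence $(\ref{indyitem1}) \Leftrightarrow (\ref{indyitem2})$ is essentially definitional: independence means $df_G(p)$ is surjective, and the displayed action of a dual-entry matrix on $X^{V(G)}$ is precisely $df_G(p)$ once we substitute the entries $a_{e,v}$. The remark following the matrix definition already states that row independence is equivalent to surjectivity of the associated linear transform, so I would cite that directly. For $(\ref{indyitem2}) \Leftrightarrow (\ref{indyitem3})$, I would use the rank–nullity relationship for the linear map $df_G(p): X^{V(G)} \to \mathbb{R}^{E(G)}$: surjectivity onto $\mathbb{R}^{E(G)}$ is equivalent to $\rank df_G(p) = \dim \mathbb{R}^{E(G)} = |E(G)|$. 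The content is just that the rank of a surjective map equals the dimension of its codomain, and the rank cannot exceed $|E(G)|$.

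The step I expect to require the most care is $(\ref{indyitem2}) \Leftrightarrow (\ref{indyitem4})$, since it requires transposing one's viewpoint from rows to a left-kernel. A stress $(a_{vw})_{vw \in E(G)}$ is exactly a vector in $\mathbb{R}^{E(G)}$ whose pairing with the rows of $R(G,p)$ vanishes: I would verify that the stress condition $\sum_{w \in N_G(v)} a_{vw}\varphi_{v,w} = 0$ at each vertex $v$ is precisely the statement that $\sum_{e \in E(G)} a_e (\text{row } e) = 0$, where summing the dual-valued entries column-by-column over all edges yields the stress condition at each vertex as the $v$-th coordinate. Concretely, the $v$-th column entry of the row for edge $e = vw$ is $\varphi_{v,w}$ (and $-\varphi_{v,w}$ with the appropriate sign convention, or $\varphi_{w,v}$, depending on orientation), so the coefficient of the $v$-th column in the linear combination $\sum_e a_e (\text{row } e)$ collects exactly the terms $a_{vw}\varphi_{v,w}$ over $w \in N_G(v)$. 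Thus a nonzero stress is the same thing as a nonzero linear dependence among the rows, and the rows are independent if and only if the only stress is zero.

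Throughout I would emphasize that no geometric input beyond well-positionedness is needed — well-positionedness guarantees every $\varphi_{v,w}$ is defined, so all four statements are well-formed — and that the entire proposition is a clean translation between the operator, its matrix, the rank, and the left-kernel (the stress space). The only genuine subtlety is keeping the sign and orientation conventions in the rows of $R(G,p)$ consistent with the stress condition as written, which is why I would write out the column-collecting computation for step $(\ref{indyitem2}) \Leftrightarrow (\ref{indyitem4})$ explicitly rather than leaving it to the reader.
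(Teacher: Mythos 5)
Your proposal is correct and follows essentially the same route as the paper: identify $R(G,p)$ with $df_G(p)$ as a linear transform so that row independence equals surjectivity, get the rank condition from the image sitting inside $\mathbb{R}^{E(G)}$, and read a stress as a linear dependence among the rows. The only cosmetic difference is that you link (\ref{indyitem2}) to (\ref{indyitem3}) where the paper links (\ref{indyitem1}) to (\ref{indyitem3}), and you spell out the column-collecting computation that the paper compresses into one sentence.
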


\begin{proof}
(\ref{indyitem1} $\Leftrightarrow$ \ref{indyitem2}): If we consider $R(G,p)$ as a linear transform then it is surjective if and only if it has row independence. As $R(G,p) = df_G(p)$ when considered as a linear transform the result follows.

(\ref{indyitem1} $\Leftrightarrow$ \ref{indyitem3}): This follows immediately as $\im df_G(p) \subseteq \mathbb{R}^{E(G)}$.

(\ref{indyitem2} $\Leftrightarrow$ \ref{indyitem4}): A non-zero stress is equivalent to a linear dependence on the edges of $R(G,p)$.
\end{proof}

\begin{remark}
Let $(G,p)$ be a well-positioned framework, then we may define a subset $E \subset E(G)$ to be independent if the subframework generated on the edge set $E$ is an independent framework. Since framework independence is a property determined by matrix row independence then the power set of $E(G)$ with the independent sets as defined will be a matroid.
\end{remark}

The following gives us some necessary and sufficient conditions for infinitesimal rigidity.

\begin{theorem}\cite[Theorem 10]{maxwell}\label{maxwellpaper}
Let $(G,p)$ be well-positioned in $X$, then the following hold:
\begin{enumerate}[(i)]
\item $(G,p)$ is independent $\Rightarrow$ $|E(G)| = (\dim X) |V(G)| - \dim \mathcal{F}(G,p)$.

\item $(G,p)$ is infinitesimally rigid $\Rightarrow$ $|E(G)| \geq (\dim X) |V(G)| - \dim \mathcal{T}(p)$.
\end{enumerate}
\end{theorem}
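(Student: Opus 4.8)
The plan is to derive both statements from the rank--nullity theorem applied to the rigidity operator $df_G(p) : X^{V(G)} \to \mathbb{R}^{E(G)}$, together with the characterisations of independence and of trivial motions already established. First I would record the two dimension counts that anchor everything: the domain $X^{V(G)}$ has dimension $(\dim X)|V(G)|$, and by the very definition of an infinitesimal flex the kernel of $df_G(p)$ is precisely $\mathcal{F}(G,p)$. The dimension theorem for the continuous linear map $df_G(p)$ then gives the single identity
\begin{align*}
(\dim X)|V(G)| = \rank df_G(p) + \dim \mathcal{F}(G,p),
\end{align*}
from which both parts will follow by specialising the term $\rank df_G(p)$.

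For part (i), I would invoke Proposition \ref{equivind}, specifically the equivalence of independence with $|E(G)| = \rank df_G(p)$ (item \ref{indyitem3}). Substituting $\rank df_G(p) = |E(G)|$ into the displayed identity and rearranging yields $|E(G)| = (\dim X)|V(G)| - \dim \mathcal{F}(G,p)$ directly, with no further work.

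For part (ii), I would first upgrade the hypothesis of infinitesimal rigidity to an equality of spaces. Infinitesimal rigidity says every flex is trivial, i.e.\ $\mathcal{F}(G,p) \subseteq \mathcal{T}(p)$; combined with the inclusion $\mathcal{T}(p) \subseteq \mathcal{F}(G,p)$ noted earlier (via \cite[Lemma 4.5]{mypaper}), this forces $\mathcal{F}(G,p) = \mathcal{T}(p)$ and hence $\dim \mathcal{F}(G,p) = \dim \mathcal{T}(p)$. Feeding this into the dimension identity gives $\rank df_G(p) = (\dim X)|V(G)| - \dim \mathcal{T}(p)$. Finally, since the rank of a linear map into $\mathbb{R}^{E(G)}$ can never exceed the codimension of its codomain, we have $\rank df_G(p) \leq |E(G)|$, and combining this bound with the previous equality delivers $|E(G)| \geq (\dim X)|V(G)| - \dim \mathcal{T}(p)$.

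I do not anticipate a genuine obstacle here: once the dictionary between the operator $df_G(p)$ and the combinatorial quantity $|E(G)|$, the geometric space $\mathcal{T}(p)$, and the flex space $\mathcal{F}(G,p)$ is in place, the argument is pure linear algebra. The only point requiring a moment's care is the passage from ``every flex is trivial'' to the equality $\mathcal{F}(G,p) = \mathcal{T}(p)$, which genuinely needs the prior inclusion $\mathcal{T}(p) \subseteq \mathcal{F}(G,p)$ rather than rigidity alone; without it one would only obtain an inequality between the dimensions, and the subsequent rank bound could not be read off as cleanly.
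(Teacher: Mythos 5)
Your proof is correct: rank--nullity applied to $df_G(p)$, the equivalence $|E(G)| = \rank df_G(p)$ from Proposition \ref{equivind}, and the upgrade of rigidity to $\mathcal{F}(G,p) = \mathcal{T}(p)$ via the inclusion $\mathcal{T}(p) \subseteq \mathcal{F}(G,p)$ are exactly the needed ingredients; note that the paper itself does not prove this statement but imports it from \cite{maxwell}, and your argument is the standard one, matching the rank--nullity method the paper explicitly invokes for the closely related Proposition \ref{isostatic}. One trivial slip in part (ii): you mean that the rank cannot exceed the \emph{dimension} of the codomain $\mathbb{R}^{E(G)}$ (not its codimension), and the bound $\rank df_G(p) \leq |E(G)|$ you actually use is the right one.
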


\begin{corollary}\cite[Lemma 4.11]{mypaper}\label{papernecessary}
Let $(G,p)$ be a independent framework with $|V(G)|\geq \dim X+1$. Then for all $H \subset G$ with $|V(H)| \geq \dim X+1$ we have $|E(H)| \leq (\dim X) |V(H)| - \dim \Iso (X)$. If $(G,p)$ is isostatic then $|E(G)|=(\dim X)|V(G)|-\dim \Iso (X)$.
\end{corollary}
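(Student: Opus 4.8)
The plan is to prove the inequality for each subgraph $H$ by first passing to the subframework $(H, p|_H)$, then transferring to a general position placement of $H$ on which the space of trivial motions attains its maximal dimension $\dim \Iso(X)$; the equality in the isostatic case will then follow by combining this inequality with the Maxwell count and infinitesimal rigidity.

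First I would check that independence is inherited by subframeworks. If $(a_{vw})_{vw \in E(H)}$ is a stress of $(H, p|_H)$, then extending it by zero on $E(G) \setminus E(H)$ yields a stress of $(G,p)$: the stress condition at each $v \in V(H)$ is unchanged, and it is trivially satisfied at vertices outside $V(H)$ since no edge of $H$ meets them. As $(G,p)$ is independent, Proposition \ref{equivind} forces this stress to vanish, so $(H, p|_H)$ is itself independent; in particular $\rank df_H(p|_H) = |E(H)|$, which is the maximal rank attainable by any rigidity operator of $H$. Hence $p|_H \in \mathcal{R}(H)$ and, more importantly, \emph{every} regular placement of $H$ is independent.

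Next I would pass to a good placement. By Lemma \ref{regopen3} there is some $q \in \mathcal{R}(H) \cap \mathcal{G}(H)$, and by the previous step $(H,q)$ is independent. Since $|V(H)| \geq \dim X + 1$ and $q$ is in general position, any $\dim X + 1$ of its points are affinely independent and therefore affinely span $X$. The linear map $T_\iota \Iso(X) \to X^{V(H)}$, $g \mapsto (g(q_v))_{v \in V(H)}$, is then injective, because each $g \in T_\iota \Iso(X)$ is an affine map (isometries are affine by Mazur--Ulam) and an affine map vanishing on an affinely spanning set is identically zero. Consequently $\dim \mathcal{T}(q) = \dim T_\iota \Iso(X) = \dim \Iso(X)$. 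Applying Theorem \ref{maxwellpaper}(i) to the independent framework $(H,q)$ and using $\mathcal{T}(q) \subseteq \mathcal{F}(H,q)$ gives
\begin{align*}
|E(H)| = (\dim X)|V(H)| - \dim \mathcal{F}(H,q) \leq (\dim X)|V(H)| - \dim \Iso(X),
\end{align*}
which is the required bound, since $|E(H)|$ depends only on $H$ and not on the placement used. For the final claim, suppose $(G,p)$ is isostatic. Infinitesimal rigidity gives $\mathcal{F}(G,p) = \mathcal{T}(p)$, so Theorem \ref{maxwellpaper}(i) yields $|E(G)| = (\dim X)|V(G)| - \dim \mathcal{T}(p)$, while $\dim \mathcal{T}(p) \leq \dim \Iso(X)$ always holds. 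Taking $H = G$ in the inequality just established gives $|E(G)| \leq (\dim X)|V(G)| - \dim \Iso(X)$, and comparing the two forces $\dim \mathcal{T}(p) = \dim \Iso(X)$, hence the stated equality.

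The main obstacle is the middle step: the naive bound $\dim \mathcal{F}(H, p|_H) \geq \dim \mathcal{T}(p|_H)$ is too weak, because a degenerate sub-placement $p|_H$ whose points fail to affinely span $X$ can have $\dim \mathcal{T}(p|_H) < \dim \Iso(X)$. The crux is the observation that independence coincides with attaining the maximal rigidity-operator rank, which lets me replace $p|_H$ by a regular placement in general position without altering $|E(H)|$; only on such a placement does the evaluation map from $T_\iota \Iso(X)$ become injective, so that the space of trivial motions fills out the full tangent space and the clean count $\dim \mathcal{T} = \dim \Iso(X)$ applies.
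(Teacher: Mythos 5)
Your proof is correct. The paper does not actually prove this statement — it is quoted from \cite[Lemma 4.11]{mypaper} and framed as a corollary of Theorem \ref{maxwellpaper} — and your derivation follows exactly the route that framing suggests: subframework independence by extending stresses by zero (Proposition \ref{equivind}), replacement of $p|_H$ by a regular general-position placement via Lemma \ref{regopen3}, injectivity of the evaluation map $g \mapsto (g(q_v))_{v \in V(H)}$ on $T_\iota \Iso(X)$ to get $\dim \mathcal{T}(q) = \dim \Iso(X)$ (the same fact the paper imports from \cite{mypaper} in the proof of Corollary \ref{isostaticgraphs}), and then the Maxwell count, with the isostatic equality obtained by comparing the rigid count $|E(G)| = (\dim X)|V(G)| - \dim \mathcal{T}(p)$ against the sparsity bound.
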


The following gives an equivalence for isostaticity.

\begin{proposition}\label{isostatic}
Let $(G,p)$ be a well-positioned framework in $X$. If any two of the following properties hold then so does the third (and $(G,p)$ is isostatic):
\begin{enumerate}[(i)]
\item $|E(G)| = (\dim X)|V(G)| - \dim \mathcal{T}(p)$
\item $(G,p)$ is infinitesimally rigid
\item $(G,p)$ is independent. 
\end{enumerate}
\end{proposition}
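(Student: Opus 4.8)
The plan is to reduce all three properties to linear relations among a few dimensions and then observe that any two of those relations force the third. Write $d = \dim X$, $n = |V(G)|$, $m = |E(G)|$, and abbreviate $r := \rank df_G(p)$, $\phi := \dim \mathcal{F}(G,p)$, and $\tau := \dim \mathcal{T}(p)$. The one genuinely substantive ingredient is the rank–nullity theorem applied to the rigidity operator $df_G(p) : X^{V(G)} \to \mathbb{R}^{E(G)}$. Since $\mathcal{F}(G,p) = \ker df_G(p)$ by definition and $\dim X^{V(G)} = dn$, this yields the identity $r + \phi = dn$, equivalently $r = dn - \phi$. This is the backbone of the whole argument.

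Next I would translate each property into an equation in these quantities. Property (iii), independence, says $df_G(p)$ is surjective, i.e. $r = m$. Property (i) is literally $m = dn - \tau$. For property (ii) I would invoke the previously recorded inclusion $\mathcal{T}(p) \subseteq \mathcal{F}(G,p)$, so that infinitesimal rigidity—every flex being trivial, i.e. the set equality $\mathcal{F}(G,p) = \mathcal{T}(p)$—is equivalent to the dimension equality $\phi = \tau$.

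With these translations, the conclusion is a one-line computation in each case. Assuming (i) and (ii) gives $r = dn - \phi = dn - \tau = m$, which is (iii). Assuming (i) and (iii) gives $\phi = dn - r = dn - m = \tau$, which is (ii). Assuming (ii) and (iii) gives $m = r = dn - \phi = dn - \tau$, which is (i). Thus in every case all three properties hold simultaneously; in particular (ii) and (iii) both hold, so $(G,p)$ is infinitesimally rigid and independent, hence isostatic by definition.

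I do not anticipate any real obstacle: the statement is pure bookkeeping once the definitions are unwound against rank–nullity. The only point requiring a moment's care is the equivalence used for (ii), which depends on the inclusion $\mathcal{T}(p) \subseteq \mathcal{F}(G,p)$ established earlier to pass from the defining set equality to the dimension equality $\phi = \tau$.
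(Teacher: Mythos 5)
Your proposal is correct and follows essentially the same route as the paper: the paper's proof is simply ``apply the Rank--Nullity theorem to the rigidity operator'' together with a citation to the bookkeeping method of \cite[Lemma 2.6.1.c]{comrig}, and your argument is exactly that bookkeeping written out, including the one necessary subtlety of using $\mathcal{T}(p) \subseteq \mathcal{F}(G,p)$ to turn infinitesimal rigidity into the dimension equality $\dim \mathcal{F}(G,p) = \dim \mathcal{T}(p)$.
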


\begin{proof}
Apply the Rank-Nullity theorem to the rigidity operator of $G$ at $p$. The result follows the same method as \cite[Lemma 2.6.1.c]{comrig}.
\end{proof}

\subsection{Rigidity and independence of graphs in the plane} \label{Rigidity and independence of graphs}

Let $G$ be any graph and $k \in \{2,3\}$. If for all $H \subseteq G$ we have that
\begin{align*}
|E(H)| \leq \max \{2 |V(H)| - k, 0 \}
\end{align*}
then we say that $G$ is \textit{$(2,k)$-sparse}. If $G$ is $(2,k)$-sparse and 
\begin{align*}
|E(G)| = 2 |V(G)| - k
\end{align*}
then we say that $G$ is \textit{$(2,k)$-tight}.

We shall say a graph $G$ is \textit{rigid in $X$} if there exists $p \in X^{V(G)}$ such that $(G,p)$ is infinitesimally rigid. Likewise we shall define a graph to be \textit{independent in $X$} if there exists an independent placement of $G$ and \textit{isostatic in $X$} if there exists an isostatic placement of $G$. 

\begin{theorem}\label{papernecessary2}
Let $X$ be a normed plane. We let $k =3$ if $X$ is Euclidean and $k=2$ if $X$ is non-Euclidean. For any graph $G$ with at least two vertices the following holds:
\begin{enumerate}[(i)]
\item \label{papernecessary2item1} If $|V(G)| \leq 3$ then $G$ is rigid if and only if $X$ is Euclidean and $G = K_2$ or $K_3$.

\item \label{papernecessary2item2} If $G$ is independent then $G$ is $(2,k)$-sparse.

\item \label{papernecessary2item3} If $G$ is isostatic $G$ is $(2,k)$-tight.

\item \label{papernecessary2item4} If $G$ is rigid then $G$ contains a $(2,k)$-tight spanning subgraph.
\end{enumerate}

\end{theorem}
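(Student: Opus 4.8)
The unifying tool throughout is the Maxwell-type count of Theorem~\ref{maxwellpaper}, combined with the observation from Proposition~\ref{paperiso1} that $\dim \mathcal{T}(p) \leq \dim \Iso(X) = k$ for every placement $p$, since $\mathcal{T}(p)$ is the image of the $k$-dimensional tangent space $T_\iota \Iso(X)$ under evaluation at $p$. For part~\ref{papernecessary2item1} I would argue the forward direction by a pure dimension count: if $(G,p)$ is infinitesimally rigid then the second clause of Theorem~\ref{maxwellpaper} gives $|E(G)| \geq 2|V(G)| - \dim \mathcal{T}(p) \geq 2|V(G)| - k$. For $|V(G)| = 2$ this reads $|E(G)| \geq 4 - k$ and for $|V(G)| = 3$ it reads $|E(G)| \geq 6 - k$; since a simple graph on two (respectively three) vertices has at most one (respectively three) edges, no rigid $G$ can exist when $k = 2$, so $X$ must be Euclidean, and when $k = 3$ the inequalities force $G = K_2$ or $G = K_3$. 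The converse is immediate, as $K_2$ and $K_3$ are $(2,3)$-tight and hence isostatic, so rigid, in the Euclidean plane by Theorem~\ref{lamanstheorem}.

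For part~\ref{papernecessary2item2} I would fix an independent placement $p$ of $G$; each subgraph $H \subseteq G$ then inherits the well-positioned subframework $(H, p|_H)$, whose rigidity matrix is a row-submatrix of $R(G,p)$ and is therefore again independent. When $|V(H)| \geq 3$ the bound $|E(H)| \leq 2|V(H)| - k$ is exactly Corollary~\ref{papernecessary} (applicable once $|V(G)| \geq 3$; if $|V(G)| \leq 2$ there are no such subgraphs), and here $2|V(H)| - k \geq 0$, so this is the truncated bound. The remaining subgraphs satisfy $|V(H)| \leq 2$, hence $|E(H)| \leq 1 \leq \max\{2|V(H)| - k, 0\}$ in each sign case, giving $(2,k)$-sparsity. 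Part~\ref{papernecessary2item3} then follows by adding the equality clause of Corollary~\ref{papernecessary}, which yields $|E(G)| = 2|V(G)| - k$ as soon as $|V(G)| \geq 3$; the case $|V(G)| = 2$ is supplied by part~\ref{papernecessary2item1}, which forces $G = K_2$ in the Euclidean plane, where indeed $|E(G)| = 1 = 2|V(G)| - 3$.

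The substantive step is part~\ref{papernecessary2item4}, where I would use the matroid structure of framework independence recorded after Proposition~\ref{equivind}. Given an infinitesimally rigid $(G,p)$, let $E' \subseteq E(G)$ be a maximal independent set of edges and put $H := (V(G), E')$, a spanning subgraph. Maximality gives $\rank df_H(p) = \rank df_G(p)$, so by rank-nullity $\ker df_H(p)$ and $\ker df_G(p)$ have equal dimension; as $E' \subseteq E(G)$ forces $\ker df_H(p) \supseteq \ker df_G(p)$, the kernels coincide. Hence $\mathcal{F}(H,p) = \mathcal{F}(G,p) = \mathcal{T}(p)$, so $(H,p)$ is infinitesimally rigid, and being independent it is isostatic. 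Then $H$ is a spanning isostatic graph, and part~\ref{papernecessary2item3} shows it is $(2,k)$-tight, completing the proof.

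The arguments are mostly bookkeeping once the machinery is in place; the two points I expect to need care are isolating the low-vertex subgraphs in parts~\ref{papernecessary2item2} and~\ref{papernecessary2item3}, where $\dim \mathcal{T}(p)$ can drop below $k$ and the truncation $\max\{\cdot,0\}$ becomes active, and verifying in part~\ref{papernecessary2item4} that passing to a maximal independent edge set preserves the whole kernel---not merely the rank---so that infinitesimal rigidity, and not just the edge count, is inherited by $H$.
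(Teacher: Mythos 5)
Your proposal is correct, and for parts (\ref{papernecessary2item2}) and (\ref{papernecessary2item3}) it is essentially the paper's own argument: independence passes to subframeworks, Corollary~\ref{papernecessary} supplies both the sparsity bound and the edge count for subgraphs on at least three vertices, and the two-vertex cases are settled via part (\ref{papernecessary2item1}). (One small imprecision: $R(H,p|_H)$ is a row-\emph{and-column} submatrix of $R(G,p)$, not just a row-submatrix; but the deleted columns vanish on the retained rows, so independence does transfer, exactly as the paper asserts without proof.) The genuine differences are at the two ends. For part (\ref{papernecessary2item1}) the paper simply cites Proposition 5.7 and Theorem 5.8 of \cite{mypaper}, whereas you prove it inside the present machinery: the count of Theorem~\ref{maxwellpaper} combined with $\dim \mathcal{T}(p) \leq \dim T_\iota \Iso(X) = k$ (Proposition~\ref{paperiso1}) excludes every graph on two or three vertices when $k=2$ and forces $G = K_2$ or $K_3$ when $k=3$, with Theorem~\ref{lamanstheorem} giving the converse. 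This is sound --- $\mathcal{T}(p)$ is indeed the image of the at most $k$-dimensional space $T_\iota \Iso(X)$ under a linear evaluation map --- and it buys self-containedness, at the cost of re-deriving a result the author prefers to import. For part (\ref{papernecessary2item4}) the paper says only that it follows from (\ref{papernecessary2item3}); you supply the argument this tacitly relies on: pass to a maximal independent edge set $E'$, note that maximality forces the row spaces, hence by rank--nullity and the inclusion $\ker df_G(p) \subseteq \ker df_H(p)$ the full flex spaces, of $(G,p)$ and $(H,p)$ to coincide, conclude that $(H,p)$ is isostatic, and invoke (\ref{papernecessary2item3}). Your emphasis that the whole kernel, and not merely the rank, must be preserved is precisely the point the paper leaves implicit, and it is the right thing to check.
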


\begin{proof}
We note $k = \Iso (X)$ by Proposition \ref{paperiso1}.

(\ref{papernecessary2item1}): This follows from \cite[Propositin 5.7]{mypaper} and \cite[Theorem 5.8]{mypaper}.

(\ref{papernecessary2item2}) \& (\ref{papernecessary2item3}): Suppose $|V(G)| \geq 3$. Let $(G,p)$ be an independent placement of $G$, then any subframework $(H,q)$ is also independent. If $|V(H)| \leq 2$ then $H$ is $(2,k)$-tight, thus by Corollary \ref{papernecessary} applied to any such subframework $(H,q)$ we have that $G$ is $(2,k)$-sparse and $(2,k)$-tight if it is isostatic. Suppose $|V(G)|=2$, then no graph us isostatic if $X$ is non-Euclidean by \ref{papernecessary2item1}. If $X$ is Euclidean then we note that $K_2$ is the only isostatic graph and it is $(2,3)$-tight.

(\ref{papernecessary2item4}): This follows from \ref{papernecessary2item3}.
\end{proof}

\begin{corollary}\label{isostaticgraphs}
Let $X$ be a normed plane and $k:= \Iso (X)$. For any graph $G$ with at least 3 vertices, if two of the following hold so does the third (and $G$ is isostatic):
\begin{enumerate}[(i)]
\item $|E(G)| = 2|V(G)| - k$

\item $G$ is independent

\item $G$ is rigid.
\end{enumerate}
\end{corollary}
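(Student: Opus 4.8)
The plan is to reduce the graph-level trichotomy to the framework-level trichotomy already established in Proposition \ref{isostatic}, evaluated at a single well-chosen placement. First I would fix, using Lemma \ref{regopen3}, a placement $p_0 \in \mathcal{R}(G) \cap \mathcal{G}(G)$; such a placement is well-positioned, regular (so that $\rank df_G(p_0)$ attains its maximum over all of $\mathcal{W}(G)$), and in general position. Because $|V(G)| \geq 3$ and $p_0$ is in general position, the two translations (and, in the Euclidean case, the infinitesimal rotation) restrict to linearly independent elements of $X^{V(G)}$, so $\dim \mathcal{T}(p_0) = \dim T_\iota \Iso(X) = k$ by Proposition \ref{paperiso1}. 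With this identification, condition (i) of Proposition \ref{isostatic} evaluated at $p_0$, namely $|E(G)| = 2|V(G)| - \dim \mathcal{T}(p_0)$, coincides exactly with the graph-level edge-count condition.

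Next I would show that the graph-level rigidity and independence conditions are equivalent to the corresponding framework conditions at $p_0$. For rigidity: if $G$ is rigid then some well-positioned $(G,q)$ is infinitesimally rigid, whence $\rank df_G(q) = 2|V(G)| - \dim \mathcal{T}(q) \geq 2|V(G)| - k$; since $p_0$ is regular, $\rank df_G(p_0) \geq \rank df_G(q)$, while $\mathcal{T}(p_0) \subseteq \mathcal{F}(G,p_0)$ forces $\rank df_G(p_0) \leq 2|V(G)| - k$. Hence $\rank df_G(p_0) = 2|V(G)| - \dim \mathcal{T}(p_0)$, so $(G,p_0)$ is infinitesimally rigid; the converse is immediate since $p_0$ is then itself a rigid placement. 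For independence the argument is identical with the count ``rank equals $|E(G)|$'' in place of the rigidity count: if $G$ is independent then $\rank df_G(q) = |E(G)|$ for some $q$, so regularity gives $\rank df_G(p_0) \geq |E(G)|$, and since $\rank df_G(p_0) \leq |E(G)|$ always, $(G,p_0)$ is independent; again the converse is immediate.

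Having matched all three graph-level conditions with the three framework conditions at the fixed placement $p_0$, I would conclude by applying Proposition \ref{isostatic} to $(G,p_0)$: any two of the framework conditions imply the third and that $(G,p_0)$ is isostatic. Translating back through the equivalences of the previous paragraph yields the third graph-level condition, and the existence of the isostatic placement $p_0$ shows that $G$ is isostatic.

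The step I expect to be the main obstacle is precisely this equivalence between the global graph properties and their framework counterparts at $p_0$ --- that is, showing that rigidity and independence are genuinely generic and are detected at the single regular placement. This hinges entirely on the maximality of $\rank df_G(p_0)$ guaranteed by regularity (Lemmas \ref{regopen2} and \ref{regopen3}), together with the uniform bounds $\rank df_G \leq |E(G)|$ and $\dim \mathcal{T} \leq k$. Some care is needed because a rigid placement $q$ witnessing rigidity of $G$ need not itself satisfy $\dim \mathcal{T}(q) = k$, but this only strengthens the rank inequality $\rank df_G(q) \geq 2|V(G)| - k$ in the direction required, so the argument goes through.
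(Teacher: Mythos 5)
Your proposal is correct and follows essentially the same route as the paper: fix a placement in $\mathcal{R}(G) \cap \mathcal{G}(G)$ via Lemma \ref{regopen3}, observe that $\dim \mathcal{T}(p_0) = \dim \Iso(X) = k$, and apply Proposition \ref{isostatic}. The only difference is one of presentation: you spell out the transfer of graph-level rigidity and independence to the fixed regular placement (and derive $\dim \mathcal{T}(p_0) = k$ directly from Proposition \ref{paperiso1}), whereas the paper delegates the trivial-motion count to external citations and leaves the genericity transfer implicit.
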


\begin{proof}
By Lemma \ref{regopen3} we may choose a regular placement $p$ of $G$ in general position. By \cite[Corollary 3.10]{mypaper} and \cite[Theorem 3.14]{mypaper}, $\dim \mathcal{T}(p) = \dim \Iso (X)$. We now apply Proposition \ref{isostatic}.
\end{proof}

\begin{remark}
We note that any framework in a non-Euclidean normed plane will be \textit{full} by part \ref{paperiso1item2} of Proposition \ref{paperiso1}, i.e.~for any placement $p$ we have $\dim \mathcal{T}(p) = \dim \Iso (X) =2$. For an in-depth discussion on the topic see \cite{mypaper}.
\end{remark}

\subsection{Pseudo-rigidity matrices and approximating not well-positioned frameworks} \label{Approximation by well-positioned frameworks}

Often frameworks which are not well-positioned can be used to obtain information about well-positioned frameworks. We can apply the following method to test for independence, mainly applied in sections \ref{The rigidity of K4 in strictly convex but not smooth normed planes} and \ref{Graph extensions in the normed plane}.

Suppose $(G,p)$ is a not well-positioned framework in a normed space $(X,\| \cdot\|)$ with an open set of smooth points, then there exists a non-empty subset $F \subset E(G)$ of non-well-positioned edges. For each $vw \in F$ we will choose some $f \in X^*$ and define $\varphi_{v,w} := f$. We define $\varphi_{v,w}$ to be the \textit{pseudo-support functional of $vw$ for $p$}. Using the support functionals of the edges in $E(G) \setminus F$ and the chosen pseudo-support functionals of the edges in $F$ we define $\phi := \{\varphi_{v,w} :vw \in E(G) \}$ to be the set of support functionals and pseudo-support functionals for our framework and $R(G,p)^\phi$ to be the $|E(G)| \times |V(G)|$ \textit{pseudo-rigidity matrix} generated by our set $\phi$ in the same manner as described in Section \ref{The rigidity map and rigidity matrix}. We shall also use the notation $(G,p)^\phi$ to indicate that we are considering $(G,p)$ with the pseudo-rigidity matrix $R(G,p)^\phi$.

We define $(G,p)^\phi$ to be \textit{independent} if $R(G,p)^\phi$ has row independence and \textit{dependent} otherwise. We define a vector $a := (a_{vw})_{vw \in E(G)} \in \mathbb{R}^{E(G)}$ to be a \textit{pseudo-stress of $(G,p)^\phi$} if it satisfies the \textit{pseudo-stress condition} i.e.~for all $v \in V(G)$, $\sum_{w \in N_G(v)} a_{vw} \varphi_{v,w} =0$. Following from Proposition \ref{equivind} we can see that $(G,p)^\phi$ is independent if and only if the only pseudo-stress is $(0)_{vw \in E(G)}$.

Suppose we have a sequence $(p^n)_{n \in \mathbb{N}}$ of well-positioned placements of $G$ such that $p^n \rightarrow p$ as $n \rightarrow \infty$ and the sequences $(\varphi^n_{v,w})_{n \in \mathbb{N}}$ in $X^*$ converge for all $vw \in E(G)$, where $\varphi^n_{v,w}$ is the support functional of $vw$ in $(G,p^n)$. If $vw \in E(G) \setminus F$ then by part \ref{paper1item2} of Proposition \ref{paper1}, $\varphi^n_{v,w} \rightarrow \varphi_{v,w}$ as $n \rightarrow \infty$. We say that $(G,p)^\phi$ is the \textit{framework limit} of $(G,p^n)$ (or $(G,p^n) \rightarrow (G,p)^\phi$ as $n \rightarrow \infty$) if $\varphi^n_{v,w} \rightarrow \varphi_{v,w}$ for all $vw \in E(G)$.

\begin{proposition}\label{framelimit}
Suppose $(G,p)^\phi$ is the framework limit of the sequence of well-positioned frameworks $((G,p^n))_{n \in \mathbb{N}}$ in $X$. If $R(G,p)^\phi$ has row independence then there exists $N \in \mathbb{N}$ such that $(G,p^n)$ is independent for all $n \geq N$.
\end{proposition}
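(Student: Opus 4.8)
The plan is to exploit the fact that having linearly independent rows is an open condition on matrices, together with the hypothesis that the rigidity matrices $R(G,p^n)$ converge entrywise to $R(G,p)^\phi$. Since both matrices have entries in the finite-dimensional space $X^*$, and since row independence was defined in Section \ref{The rigidity map and rigidity matrix} via the associated linear transform, the whole question reduces to the behaviour of finitely many real coordinates under the limit $n \to \infty$.

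First I would fix a basis of $X$, so that each dual-space entry $\varphi_{v,w}$ of $R(G,p)^\phi$ and each entry $\varphi^n_{v,w}$ of $R(G,p^n)$ is recorded by $\dim X$ real coordinates. This turns both matrices into genuine real matrices of size $|E(G)| \times (\dim X)|V(G)|$, and, by the identification of $R(G,p)$ with $df_G(p)$ in Section \ref{The rigidity map and rigidity matrix}, row independence as a linear transform corresponds exactly to these real rows being linearly independent.

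Next, row independence of $R(G,p)^\phi$ means its rank equals $|E(G)|$; equivalently, there is a choice of $|E(G)|$ columns for which the resulting square submatrix $M$ has $\det M \neq 0$. Let $M^n$ denote the submatrix of $R(G,p^n)$ on the same columns. The framework limit hypothesis gives $\varphi^n_{v,w} \to \varphi_{v,w}$ for every edge $vw \in E(G)$: automatically for the well-positioned edges of $E(G) \setminus F$ by continuity of $\varphi$ (part \ref{paper1item2} of Proposition \ref{paper1}), and by assumption for the edges in $F$. In the fixed coordinates this forces the entries of $M^n$ to converge to those of $M$. As the determinant is a polynomial in the matrix entries, $\det M^n \to \det M \neq 0$, so there exists $N \in \mathbb{N}$ with $\det M^n \neq 0$ for all $n \geq N$.

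Finally, for such $n$ the matrix $R(G,p^n)$ possesses a nonvanishing $|E(G)| \times |E(G)|$ minor, so its rows are linearly independent; by Proposition \ref{equivind} the well-positioned framework $(G,p^n)$ is independent, as required. I do not anticipate a serious obstacle here: the only point needing care is the passage from ``row independence of a matrix with entries in $X^*$'' to a real determinant condition, which is settled once a basis is fixed, after which everything rests on the continuity of the determinant as a polynomial map.
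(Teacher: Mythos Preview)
Your proposal is correct and follows essentially the same line as the paper's proof: both rest on the observation that row independence (equivalently, surjectivity of the associated linear map) is an open condition, together with the entrywise convergence $R(G,p^n) \to R(G,p)^\phi$ guaranteed by the framework-limit hypothesis. The only difference is cosmetic: the paper invokes directly that the set of surjective maps in $L(X^{V(G)},\mathbb{R}^{E(G)})$ is open, whereas you fix coordinates and argue via a nonvanishing $|E(G)|\times|E(G)|$ minor and the continuity of the determinant, which is precisely one standard way of proving that openness statement.
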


\begin{proof}
First note that if we consider $|E(G)| \times |V(G)|$ matrices with entries in $X^*$ to be elements of $L(X^{V(G)}, \mathbb{R}^{E(G)})$ as described in Section \ref{The rigidity map and rigidity matrix} then they will have row independence if and only if they are surjective. As $(G,p^n) \rightarrow (G,p)^\phi$ as $n \rightarrow \infty$ then $R(G,p^n) \rightarrow R(G,p)^\phi$ entrywise as $n \rightarrow \infty$. Since the set of surjective maps of $L(X^{V(G)}, \mathbb{R}^{E(G)})$ is an open subset and $R(G,p)^\phi$ is surjective then by Lemma \ref{rigopcont} the result follows.
\end{proof}

\section{Rigidity of \texorpdfstring{$K_4$}{K4} in all normed planes}\label{Rigidity of K4 in all normed planes}

In this section we shall prove the following.

\begin{theorem}\label{keytheorem}
$K_4$ is rigid in all normed planes.
\end{theorem}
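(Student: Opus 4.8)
The plan is to prove rigidity of $K_4$ uniformly over all normed planes by partitioning, as in the section outline, according to the geometry of the unit ball into the three cases (i) $X$ not strictly convex, (ii) $X$ strictly convex but not smooth, and (iii) $X$ strictly convex and smooth. In every \emph{non-Euclidean} case my target will be a single \emph{independent} well-positioned placement of $K_4$: since $|E(K_4)| = 6 = 2|V(K_4)| - 2$ and $\dim \Iso (X) = 2$ for non-Euclidean $X$ (Proposition \ref{paperiso1}), Corollary \ref{isostaticgraphs} automatically upgrades any independent placement to an isostatic, hence rigid, one. By Proposition \ref{equivind}(\ref{indyitem4}) this reduces to exhibiting a well-positioned placement whose only stress is zero. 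The Euclidean plane lies inside case (iii), but there $K_4$ is \emph{not} independent (it is over-braced, with $6 > 2\cdot 4 - 3$), so in that case the target must instead be rigidity directly, i.e. showing $\ker df_{K_4}(p) = \mathcal{T}(p)$; I will therefore design the case (iii) argument to certify rigidity (which specialises correctly to the Euclidean norm, though that case could alternatively be dispatched classically, since $K_4 \supseteq K_4 - e$ with $K_4 - e$ being $(2,3)$-tight and hence isostatic by Theorem \ref{lamanstheorem}).

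For case (i) I would exploit a maximal segment $[a,b] \subseteq S_1[0]$ supplied by the failure of strict convexity. Every point in the relative interior of such a segment is smooth and shares one and the same support functional $f$, since its unique supporting line is the line through $[a,b]$; thus $\varphi$ is constant, equal to $f$, on the open segment $(a,b)$. I would then place the two opposite edges $12$ and $34$ in directions lying in $(a,b)$, forcing $\varphi_{1,2} = \varphi_{3,4} = f$, and position $p_3,p_4$ so that the remaining edges $13,14,23,24$ point in smooth directions in general position. Adding the vertex stress conditions at $\{1,2\}$ (resp. $\{3,4\}$) cancels the two copies of $f$, and a direct elimination then forces every stress coefficient to vanish for a suitable generic choice of the other four directions — a short $6 \times 6$ computation confirms nonsingularity. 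The one point needing care is \emph{realisability}: the four remaining functionals are not free covectors but $\varphi$-images of edge directions coming from only four points, so I would sweep the positions of $p_3,p_4$ (and the flat directions of $12,34$) and invoke openness of the nonsingularity condition to land in an admissible configuration.

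For the strictly convex cases I would lean on Proposition \ref{str}: in (iii) the dual map $\varphi$ is continuous, injective, homogeneous, and sends linearly independent vectors to linearly independent functionals, and in (ii) it retains these properties on the dense set $\smooth$. In case (iii) I would take four points in convex position and try to show the reduced $6 \times 6$ stress determinant is nonzero; because $\varphi$ is only continuous, not algebraic, Zariski-genericity is unavailable, so I would follow the rotation method of \cite{rotation}, continuously rotating one edge direction (equivalently moving the fourth point along a loop) and using a winding/intermediate-value argument on the determinant to locate a full-rank, hence rigid, placement, with Lemma \ref{regopen2} then supplying a regular one. In case (ii) I would instead run a limiting argument through Proposition \ref{framelimit}: choosing a non-smooth point $x_0$ whose support set is a nondegenerate segment $[f,g]$ (Proposition \ref{supportset}), I place edge $12$ in the direction $x_0$ with chosen pseudo-support functional $f$, keep the other five edges smooth and generic, and verify that $R(K_4,p)^\phi$ has independent rows. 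Approaching $x_0$ from the side whose support functionals converge to $f$ yields well-positioned $p^n \to p$ with $(K_4,p^n) \to (K_4,p)^\phi$, whence Proposition \ref{framelimit} makes $(K_4,p^n)$ independent for large $n$; the crucial leverage is precisely the freedom to select the limiting functional out of $[f,g]$ at the corner, which is absent in the smooth case.

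The main obstacle I anticipate is case (iii). In cases (i) and (ii) the geometric degeneracy — a flat face of $S_1[0]$, respectively a corner of the dual — hands us extra freedom that collapses the problem to a finite linear-algebra check together with either an openness or an approximation argument. In the smooth and strictly convex case there is no degeneracy to exploit and, fatally for the naive approach, $\varphi$ is merely continuous, so the locus of placements where the rigidity matrix drops rank need not be an algebraic set and cannot be avoided by a generic perturbation. Certifying that a full-rank placement exists therefore demands a genuinely topological argument, the rotation/degree method of \cite{rotation}, and coaxing that argument into producing the nonvanishing determinant while keeping the configuration well-positioned and in general position throughout the deformation is where I expect the real difficulty to lie.
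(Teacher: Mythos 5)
Your three-case decomposition is exactly the paper's, and your plans for cases (i) and (ii) follow its route in substance: in case (i) the paper also exploits a flat segment $[x_1,x_2]\subset S_1[0]$ to force repeated support functionals (it arranges \emph{three} edges to share $\varphi(x)$ and two to share $\varphi(y)$, which makes the row-independence check immediate, and it resolves your realisability worry by explicit cone arguments and Lemma \ref{notstrictlemma6} rather than an openness sweep); in case (ii) the paper likewise places the unique non-well-positioned edge in the non-smooth direction, forms a pseudo-rigidity matrix with a functional chosen from $\varphi[z]=[f,g]$, and invokes Proposition \ref{framelimit}. One repair is needed in your case (ii): you cannot promise in advance that the choice $f$ yields row independence. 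The paper's Lemma \ref{technicalk4lemma1} shows independence holds only when the chosen pseudo-functional is linearly independent of a specific combination $a\varphi(x)-b\varphi(y)$ determined by the other five edges; it may happen that $f$ fails this, and the argument works because $f,g$ are linearly independent, so \emph{at least one} of them succeeds. Hence one must run the approximation from both sides (the content of Lemma \ref{technicalk4lemma3}) and choose a posteriori, which is what the paper does.

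The genuine gap is case (iii), and you correctly sensed it is the hard one. An intermediate-value or winding argument on the reduced $6\times 6$ determinant cannot \emph{locate a non-zero}: IVT-type arguments produce zeros, and to show the determinant is not identically zero along your loop you need an input that fails in the Euclidean plane, since for the Euclidean norm that determinant \emph{is} identically zero ($K_4$ has $6>2\cdot 4-3$ edges, so it is never independent there). Your sketch never identifies such an input, so as described it cannot distinguish the two geometries. The paper's mechanism is different and supplies precisely this ingredient: it builds the configuration space $V_0(p)$ of $K_4-e$ pinned at $v_1$, shows it is a compact $C^1$-circle (Lemmas \ref{a1}--\ref{a6}), and applies the Mean Value Theorem not to a determinant but to the length $\|\alpha_{v_4}(t)\|$ of the \emph{missing} edge along the flex path. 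The non-Euclidean input is Theorem \ref{a8} (Alonso--Ben\'{i}tez \cite{alonsoben}): in a non-Euclidean plane the quantity $\|a+b\|$ subject to $\|a\|=\|b\|=1$, $\|a-b\|=\epsilon$ is non-constant. This forces $\|\alpha_{v_4}(t)\|$ to take distinct values, so its derivative is non-zero at some $s$; there the unique non-trivial pinned flex of $K_4-e$ fails to preserve the $v_1v_4$ distance, hence $(K_4,\alpha(s))$ is infinitesimally rigid. Without Theorem \ref{a8} or an equivalent characterisation of inner-product planes, your case (iii) plan has no way to certify that the rank ever reaches its maximum, and the proof does not go through.
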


This shall follow from Lemma \ref{keylemma1}, Lemma \ref{keylemma2} and Lemma \ref{keylemma3}. We shall consider three separate cases; not strictly convex normed planes (Section \ref{The rigidity of K4 in not strictly convex normed planes}), strictly convex but not smooth normed planes (Section \ref{The rigidity of K4 in strictly convex but not smooth normed planes}), and strictly convex and smooth normed planes (Section \ref{The rigidity of K4 in strictly convex and smooth normed planes}).

\subsection{The rigidity of \texorpdfstring{$K_4$}{K4} in not strictly convex normed planes} \label{The rigidity of K4 in not strictly convex normed planes}

\begin{lemma}\label{notstrictlemma1}
For any $x \in S_1[0] \cap \smooth$ the set $\varphi(x)^{-1}[\{1\}] \cap S_1[0]$ is closed and convex.
\end{lemma}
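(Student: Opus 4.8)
The plan is to treat closedness and convexity as two independent tasks, with convexity being the substantive one. For closedness, I would simply observe that $\varphi(x) \in X^*$ is a continuous linear functional, so $\varphi(x)^{-1}[\{1\}]$ is the preimage of the closed set $\{1\}$ and hence closed, while $S_1[0]$ is closed by definition; the intersection of two closed sets is closed.

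For convexity, the starting point is the definition of the support functional: since $x \in S_1[0] \cap \smooth$ we have $\|\varphi(x)\| = \|x\| = 1$ and $\varphi(x)(x) = \|x\|^2 = 1$. The crucial consequence is that, because $\varphi(x)$ has operator norm exactly $1$, every $z \in X$ satisfies the bound $\varphi(x)(z) \leq \|z\|$. Now I would take $y_1, y_2 \in \varphi(x)^{-1}[\{1\}] \cap S_1[0]$ and $t \in [0,1]$, and set $z := t y_1 + (1-t) y_2$. Linearity of $\varphi(x)$ gives $\varphi(x)(z) = t \varphi(x)(y_1) + (1-t)\varphi(x)(y_2) = 1$. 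The triangle inequality gives the upper estimate $\|z\| \leq t\|y_1\| + (1-t)\|y_2\| = 1$, whereas the norm-one bound gives the lower estimate $\|z\| \geq \varphi(x)(z) = 1$. Combining these forces $\|z\| = 1$, so $z \in S_1[0]$, and since also $\varphi(x)(z) = 1$ we conclude $z \in \varphi(x)^{-1}[\{1\}] \cap S_1[0]$, establishing convexity.

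The only genuinely subtle point — and what I expect to be the conceptual heart of the argument — is showing that the convex combination stays \emph{on} the sphere rather than drifting into the interior of the unit ball. The triangle inequality by itself only yields $\|z\| \leq 1$; the reverse inequality depends essentially on the fact that $\varphi(x)$ has dual norm exactly $1$, so that the functional value cannot exceed the norm. This sandwiching between $\varphi(x)(z) \leq \|z\|$ and the triangle bound is precisely what pins $z$ to $S_1[0]$, reflecting that the set in question is exactly the contact face of the unit ball with the supporting hyperplane $\{z : \varphi(x)(z) = 1\}$.
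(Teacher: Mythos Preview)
Your proof is correct and follows essentially the same approach as the paper: both establish convexity by sandwiching $\|ty_1+(1-t)y_2\|$ between $\varphi(x)(ty_1+(1-t)y_2)=1$ (using $\|\varphi(x)\|=1$) and the triangle inequality, and both handle closedness via continuity of $\varphi(x)$. The only difference is cosmetic---you spell out explicitly why $\|\varphi(x)\|=1$, whereas the paper leaves this implicit.
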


\begin{proof}
Choose $y, z \in \varphi(x)^{-1}[\{1\}] \cap S_1[0]$, then $\varphi(x) (ty +(1-t)z) =1$ for all $t \in [0,1]$. We further note that
\begin{align*}
1=|\varphi(x) (ty +(1-t)z)| \leq \|ty +(1-t)z \| \leq 1,
\end{align*}
thus $ty +(1-t)z \in S_1[0]$ also and $\varphi(x)^{-1}[\{1\}] \cap S_1[0]$ is convex. As $\varphi(x)$ is continuous then $\varphi(x)^{-1}[\{1\}] \cap S_1[0]$ is closed also. 
\end{proof}

If $\dim X=2$ it follows that $\varphi(x)^{-1}[\{1\}] \cap S_1[0] = [x_1, x_2]$ as $S_1[0]$ is a $1$-dimensional topological manifold homeomorphic to the circle.

\begin{lemma}\label{notstrictlemma2}
If $[x_1,x_2] \subset S_1[0]$ and $x,y \in [x_1,x_2] \cap \smooth$ then $\varphi(x)=\varphi(y)$.
\end{lemma}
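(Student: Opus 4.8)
The plan is to show that the single functional $f := \varphi(x)$ is simultaneously a support functional of every smooth point of the segment; since a smooth point has a \emph{unique} support functional, this will force $\varphi(y) = f = \varphi(x)$. First I would record the basic properties of $f$. Because $x \in S_1[0]$ is smooth, $f$ is its unique support functional, so $\|f\| = \|x\| = 1$ and $f(x) = \|x\|^2 = 1$; moreover $\|f\|=1$ gives $f(z) \le \|z\|$ for all $z$, and in particular $f(z) \le 1$ for every $z \in S_1[0] \supseteq [x_1,x_2]$.

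The heart of the argument is to upgrade this inequality to the equality $f \equiv 1$ on all of $[x_1,x_2]$. I would parametrise the segment by $\gamma(t) = x_1 + t(x_2 - x_1)$, $t \in [0,1]$, and set $g(t) = \|\gamma(t)\|$. Since $[x_1,x_2] \subseteq S_1[0]$, the function $g$ is constant equal to $1$. On the other hand, as $x$ is smooth, parts \ref{paper1item1} and \ref{paper1item3} of Proposition \ref{paper1} say that the norm is differentiable at $x$ with derivative $\tfrac{1}{\|x\|}\varphi(x) = f$; composing with the affine map $\gamma$ shows that $g$ is differentiable at the parameter value corresponding to $x$ with derivative $f(x_2 - x_1)$. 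Constancy of $g$ then yields $f(x_2 - x_1) = 0$, that is, $f(x_1) = f(x_2)$. Since $f$ is affine along the segment and $f(x) = 1$, this forces $f(x_1) = f(x_2) = 1$, and hence $f(z) = 1$ for all $z \in [x_1,x_2]$.

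It then remains to conclude. For the chosen smooth point $y \in [x_1,x_2]$ we now have $f(y) = 1 = \|y\|^2$ together with $\|f\| = 1 = \|y\|$, so $f$ is a support functional of $y$. As $y$ is smooth it has exactly one support functional, whence $\varphi(y) = f = \varphi(x)$, as required.

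The step I expect to require the most care is establishing $f(x_2 - x_1) = 0$ when $x$ is an \emph{endpoint} of the segment. At an interior point the conclusion is immediate, because an affine function bounded above by $1$ that attains the value $1$ in the interior of $[0,1]$ must be constantly $1$; but if $x$ coincides with $x_1$ or $x_2$ this reasoning fails, and one genuinely needs the differentiability of the norm at $x$ to control the derivative of $g$. The subtlety is that $g$ is only defined on $[0,1]$, so at an endpoint it carries only a one-sided derivative; the point is that this one-sided derivative must agree with the two-sided directional derivative $f(x_2 - x_1)$ supplied by smoothness of $x$, forcing it to vanish. Once this is handled, the argument applies uniformly and no case distinction between interior and boundary points of the segment is needed.
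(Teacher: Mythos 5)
Your proof is correct, and at the key step it takes a genuinely different route from the paper's. Both arguments reduce the lemma to showing that $\varphi(x)$ equals $1$ on all of $[x_1,x_2]$, and both then finish identically, invoking uniqueness of the support functional at the smooth point $y$. The paper gets the intermediate claim by writing $x = t_0 x_1 + (1-t_0)x_2$ with $t_0 \in (0,1)$ and noting that the affine function $t \mapsto \varphi(x)(t x_1 + (1-t)x_2)$ is bounded above by $1$ and attains $1$ at the \emph{interior} parameter $t_0$, hence is constant; this maximum-principle argument uses only the support inequality $\varphi(x)(z) \leq \|z\|$ and never the differentiability of the norm. You instead differentiate the constant function $t \mapsto \|x_1 + t(x_2 - x_1)\|$ at the parameter of $x$, using parts \ref{paper1item1} and \ref{paper1item3} of Proposition \ref{paper1} and the chain rule to identify this derivative with $\varphi(x)(x_2 - x_1)$, which must therefore vanish. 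What your route buys is exactly the endpoint case you single out: the paper's argument genuinely fails when $t_0 \in \{0,1\}$ (an affine function can attain its bound at an endpoint of $[0,1]$ without being constant), and its proof as written silently assumes $x$ lies in the open segment, even though the hypothesis permits $x$ to be an endpoint and smooth endpoints of segments in unit spheres do occur (e.g.\ a ``stadium''-shaped unit ball, where a flat edge meets a circular arc tangentially). Your reconciliation of the one-sided derivative of the restricted function with the two-sided derivative supplied by smoothness is the correct way to close that case, so your argument is strictly more complete. What the paper's route buys in exchange is economy of hypotheses at that step: it needs only that $x$ admits a support functional with $\varphi(x)(x)=1$, not that the norm is differentiable at $x$.
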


\begin{proof}
If $x_1=x_2$ this is immediate so assume $x_1 \neq x_2$. Choose $x := t_0 x_1 + (1-t_0) x_2 \in (x_1,x_2)$ for $t_0 \in (0,1)$ and define the convex and differentiable map $f :[0,1] \rightarrow \mathbb{R}$ where
\begin{align*}
f(t) := \varphi(x)(tx_1 +(1-t)x_2) = t \varphi(x) x_1 +(1-t)\varphi(x)x_2.
\end{align*}
We note $f(t_0)=1$ and $f'(t) = \varphi(x) x_1 - \varphi(x)x_2$, thus if $f$ is not constant then there exists $t \in[0,1]$ where $f(t) >1$; however we note
\begin{align*}
|f(t)| \leq t |\varphi(x)x_1| +(1-t)|\varphi(x)x_2| \leq 1,
\end{align*}
a contradiction. As $f$ is constant then $f(t)=f(t_0) =1$ for all $t \in [0,1]$, thus $\varphi(x)$ is a support functional for all $y \in [x_1,x_2]$ and the result follows.
\end{proof}

\begin{lemma}\label{notstrictlemma3}
Let $x,y \in S_1[0] \cap \smooth$ where $\varphi(x)^{-1}[\{1\}] \cap S_1[0] = [x_1,x_2]$ and $x_1 \neq x_2$. Define $a, b \in \mathbb{R}$ such that $y = a x_1 + b x_2$, then one of the following holds:
\begin{enumerate}[(i)]
\item \label{notstrictlemma3item1} $a, b \geq 0$ or $a, b \leq 0$ and $\varphi(x), \varphi(y)$ are linearly dependent.

\item \label{notstrictlemma3item2} $a<0 <b$ or $b < 0 <a$ and $\varphi(x), \varphi(y)$ are linearly independent.
\end{enumerate}
\end{lemma}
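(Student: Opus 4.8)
The plan is to reduce the whole statement to two elementary facts and the single identity $\varphi(x)(y)=a+b$. First I would record that $\{x_1,x_2\}$ is a basis of $X$, so that the decomposition $y=ax_1+bx_2$ is legitimate and the coefficients $a,b$ are uniquely determined. Indeed, if $x_1=cx_2$ then $\|x_1\|=\|x_2\|=1$ forces $c=\pm1$; the value $c=1$ is excluded since $x_1\neq x_2$, and $c=-1$ would put the midpoint $\tfrac12(x_1+x_2)=0$ on $[x_1,x_2]\subseteq S_1[0]$, which is absurd. Second, directly from the hypothesis $[x_1,x_2]=\varphi(x)^{-1}[\{1\}]\cap S_1[0]$ I get $\varphi(x)(x_1)=\varphi(x)(x_2)=1$, and hence $\varphi(x)(y)=a\varphi(x)(x_1)+b\varphi(x)(x_2)=a+b$. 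These observations carry essentially all of the geometric content.

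For part (\ref{notstrictlemma3item1}), consider first $a,b\geq0$; since $y\neq0$ we have $a+b>0$. Then $\tfrac{1}{a+b}y=\tfrac{a}{a+b}x_1+\tfrac{b}{a+b}x_2$ is a convex combination of $x_1,x_2$, so it lies in $[x_1,x_2]\subseteq S_1[0]$; taking norms gives $a+b=\|y\|=1$, so in fact $y\in[x_1,x_2]$. As $y$ is smooth, Lemma \ref{notstrictlemma2} yields $\varphi(y)=\varphi(x)$, so the two functionals are dependent. The case $a,b\leq0$ follows by applying the previous case to $-y=(-a)x_1+(-b)x_2$ and using homogeneity of $\varphi$, which gives $\varphi(y)=-\varphi(-y)=-\varphi(x)$; in either case $\varphi(x),\varphi(y)$ are linearly dependent.

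For part (\ref{notstrictlemma3item2}) I would argue by contraposition, showing that if $\varphi(x),\varphi(y)$ are linearly dependent then $a,b$ cannot have strictly opposite signs. Linear dependence together with $\|\varphi(x)\|=\|x\|=1$ and $\|\varphi(y)\|=\|y\|=1$ forces $\varphi(y)=\pm\varphi(x)$. If $\varphi(y)=\varphi(x)$, then $\varphi(x)(y)=\varphi(y)(y)=\|y\|^2=1$, so $y\in\varphi(x)^{-1}[\{1\}]\cap S_1[0]=[x_1,x_2]$; by uniqueness of the decomposition this forces $a,b\geq0$. If instead $\varphi(y)=-\varphi(x)$, the same computation applied to $-y$ (using $\varphi(-y)=\varphi(x)$) places $-y\in[x_1,x_2]$ and gives $a,b\leq0$. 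In both situations we land in the regime of part (\ref{notstrictlemma3item1}), contradicting $a<0<b$ or $b<0<a$.

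The argument is short and the only real care needed is bookkeeping: making sure the degenerate possibilities (one coefficient equal to zero, or $y=0$) are correctly subsumed under the "same sign" case, and handling the sign flips through the homogeneity of $\varphi$ when passing between $y$ and $-y$. I expect the mild subtlety — rather than a genuine obstacle — to be verifying that the hypothesis $[x_1,x_2]=\varphi(x)^{-1}[\{1\}]\cap S_1[0]$ pins down membership in the segment purely in terms of the signs of $a,b$ once normalised, which is exactly what lets me convert between the two dichotomies.
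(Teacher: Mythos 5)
Your proof is correct and takes essentially the same route as the paper's: both hinge on the identity $\varphi(x)(y)=a+b$, prove part (\ref{notstrictlemma3item1}) by forcing equality in the triangle inequality so that $y$ (or $-y$) ends up being supported by $\varphi(x)$, and prove part (\ref{notstrictlemma3item2}) by the identical contrapositive reduction of $\varphi(y)=\pm\varphi(x)$ to membership of $\pm y$ in $[x_1,x_2]$. One small point to make explicit: your appeal to Lemma \ref{notstrictlemma2} in part (\ref{notstrictlemma3item1}) requires not only $y\in[x_1,x_2]\cap\smooth$ but also $x\in[x_1,x_2]\cap\smooth$, which does hold since $\varphi(x)(x)=\|x\|^2=1$ places $x$ in $\varphi(x)^{-1}[\{1\}]\cap S_1[0]$; the paper sidesteps this by arguing directly that $\varphi(x)$ is a support functional of $y$ and then invoking smoothness of $y$.
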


\begin{proof}
(\ref{notstrictlemma3item1}): If $a=0$ then $y=x_2$ or $-x_2$ and $\varphi(x), \varphi(y)$ are linearly dependent; similarly if $b=0$ then $\varphi(x), \varphi(y)$ are linearly dependent. We first note that $\varphi(x)y = a+b$. If $a,b >0$ then
\begin{align*}
a+b = \varphi(x)y \leq \|y\| = \|a x_1 + bx_2 \| \leq a +b,
\end{align*}
thus $\varphi(x)$ is a support functional of $y$. If $a,b<0$ then similarly we have $\varphi(y) = - \varphi(-y)= - \varphi(x)$; in either case $\varphi(x), \varphi(y)$ are linearly dependent.

(\ref{notstrictlemma3item2}): Let $a<0 <b$ and $\varphi(x),\varphi(y)$ be linearly dependent. As $\varphi(y) = - \varphi(-y)$ we may assume $\varphi(y) = \varphi(x)$, thus $\varphi(x)y= 1$. By assumption this implies $y \in [x_1, x_2]$; it follows that there exists $t \in [0,1]$ such that
\begin{align*}
y = t x_1 + (1-t) x_2,
\end{align*}
thus $a,b \geq 0$ contradicting our assumption. We see a similar contradiction if $b < 0 <a$ and $\varphi(x),\varphi(y)$ be linearly dependent, thus the result holds. 
\end{proof}

\begin{lemma}\label{notstrictlemma4}
Let $X$ be a normed plane that is not strictly convex, then there exists $x, y \in S_1[0] \cap \smooth$ such that the following holds:
\begin{enumerate}[(i)]
\item $\varphi(x)^{-1}[\{1\}] \cap S_1[0] = [x_1,x_2]$ with $x_1 \neq x_2$.

\item $\varphi(x), \varphi(y)$ are linearly independent.

\item $y = a x_1 -b x_2$ for $a,b>0$.

\item $-a x_1 + 2b x_2 \in \smooth$.
\end{enumerate}
\end{lemma}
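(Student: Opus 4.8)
The plan is to start from the failure of strict convexity, which by definition gives two distinct points on the unit sphere whose segment lies in the sphere; extending this segment to a maximal line segment in $S_1[0]$ produces a nontrivial flat face $[x_1,x_2]$ with $x_1 \neq x_2$. Taking any smooth point $x$ in the relative interior of this face, Lemma \ref{notstrictlemma2} guarantees that $\varphi(x)$ is constant along the face, and $\varphi(x)^{-1}[\{1\}] \cap S_1[0] = [x_1,x_2]$ by the remark after Lemma \ref{notstrictlemma1}; this secures condition (i). The geometric content is that $x_1,x_2$ are themselves smooth (being the endpoints of a maximal face, they lie in the interior of the face's affine hull relative to the sphere, or else we argue they are limits of smooth interior points with $\varphi$ continuous), so we have a controlled supporting line along the whole face.

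Next I would produce the second point $y$ satisfying (ii)--(iv) simultaneously. First I would secure linear independence: by Proposition \ref{abc} applied with the smooth point $x$, there exists $y' \in S_1[0] \cap \smooth$ with $x \neq y'$ and $\varphi(x),\varphi(y')$ linearly independent, so independent directions are available. The trick is to arrange the sign pattern $y = a x_1 - b x_2$ with $a,b > 0$, which by Lemma \ref{notstrictlemma3}(\ref{notstrictlemma3item2}) is precisely the condition forcing $\varphi(x),\varphi(y)$ to be linearly independent. Since $\{x_1,x_2\}$ is a basis of $X$ (they are distinct nonzero points on the sphere, hence linearly independent once we note $x_2 \neq \pm x_1$), I would write the candidate $y'$ in coordinates $y' = \alpha x_1 + \beta x_2$ and, using that $\varphi(x),\varphi(y')$ are independent, conclude via Lemma \ref{notstrictlemma3} that $\alpha,\beta$ have strictly opposite signs. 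Passing to $-y'$ if necessary (which flips the sign of $\varphi$ but preserves smoothness and independence), I can normalize to the case $\alpha > 0 > \beta$, i.e.\ set $a = \alpha$, $b = -\beta > 0$, giving (iii).

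The main obstacle, and the only condition not handed to us directly, is (iv): ensuring $-a x_1 + 2b x_2 \in \smooth$. The point $y = a x_1 - b x_2$ lies on the sphere and is smooth by construction, and the direction $-a x_1 + 2b x_2$ is a genuinely different direction, but there is no a priori guarantee it is a smooth point. My strategy here is to exploit the freedom still remaining in the choice of $y$. The set $S_1[0] \cap \smooth$ is dense in $S_1[0]$ by part (\ref{paper1item0}) of Proposition \ref{paper1}, and the conditions (ii),(iii) are open conditions on $y$ within the relevant region of the sphere (they are strict inequalities $a < 0 < b$ after relabelling, and linear independence is open). Meanwhile the map $y = a x_1 - b x_2 \mapsto -a x_1 + 2b x_2$ is a fixed invertible linear map $L$, so $\{y : L(\tfrac{y}{\|y\|}) \in \smooth\}$ has full measure by part (\ref{paper1item0}) applied to the image of the measure-zero nonsmooth set under $L^{-1}$. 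Intersecting an open nonempty set of valid $y$ (satisfying (ii),(iii)) with this full-measure set yields a point also satisfying (iv).

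I would therefore phrase the final step as a measure-theoretic genericity argument: the candidates $y$ satisfying (i)--(iii) form a nonempty relatively open arc of the sphere, the nonsmooth points of $S_1[0]$ form a measure-zero set, and the preimage under the linear map $y \mapsto -ax_1 + 2bx_2$ of the nonsmooth points is again measure zero; so a generic $y$ in the open arc satisfies all four conditions simultaneously. The delicate point worth checking carefully is that the arc of admissible $y$ is genuinely nonempty and relatively open — this is where one must confirm that the opposite-sign coordinate region $\{\alpha > 0 > \beta\}$ meets the sphere in an arc of positive length, which follows because $x_1,x_2$ span $X$ and the sphere surrounds the origin, so it must pass through every open quadrant determined by the lines $\mathbb{R}x_1$ and $\mathbb{R}x_2$.
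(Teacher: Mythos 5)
Your overall strategy is the same as the paper's: secure (i) from a nontrivial flat face of the sphere via Lemmas \ref{notstrictlemma1} and \ref{notstrictlemma2}, use Lemma \ref{notstrictlemma3} to translate the sign pattern in (iii) into the linear independence in (ii), and then obtain (iv) by a measure argument using the facts that $\smooth^c$ is Lebesgue-null and that an invertible linear map (the paper's $T$, your $L$) preserves null sets. The paper perturbs one particular point $y'$ (obtained from Proposition \ref{abc}) inside the dense set $T^{-1}(\smooth) \cap \smooth$, using continuity of $\varphi$ to retain independence; you instead run a global genericity argument over the whole admissible region, using the dichotomy in Lemma \ref{notstrictlemma3} so that (ii) comes for free from (iii). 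That variant is legitimate, and it in fact makes your invocation of Proposition \ref{abc} redundant.

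Two points need repair. First, your parenthetical claim that the endpoints $x_1,x_2$ of the maximal face are themselves smooth is false in general: the corners of the square unit ball of $\ell_\infty^2$ are endpoints of maximal faces and have many support functionals. The ``limit of smooth interior points'' argument only shows that $\varphi(x)$ is \emph{a} support functional of $x_1$ and $x_2$, not the unique one ($\varphi$ is only continuous on $\smooth$, so no smoothness of limits can be inferred). Fortunately neither your later steps nor Lemma \ref{notstrictlemma3} use this, so it should simply be deleted. Second, your final step is phrased on the sphere: ``the candidates form a nonempty relatively open arc \dots\ so a generic $y$ in the open arc satisfies all four conditions.'' As stated this does not parse: the arc has planar Lebesgue measure zero, while the null-set statements you cite (Proposition \ref{paper1}) concern planar measure, so ``generic on the arc'' is not justified by them; moreover the candidate set is not relatively open, since smoothness is not an open condition (there are norms whose non-smooth directions are dense in the sphere). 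The fix is the one you already gesture at with $L(y/\|y\|)$: work in the plane with the nonempty open cone $C := \{a x_1 - b x_2 : a,b > 0\}$, note that $C \cap \smooth \cap L^{-1}(\smooth)$ is nonempty because $C$ is open and the other two sets have null complements, pick $y_0$ there and set $y := y_0/\|y_0\|$; homogeneity of smoothness and of $\varphi$ then yields (iii) and (iv), and Lemma \ref{notstrictlemma3} yields (ii). With these two changes your proof is correct and essentially coincides with the paper's.
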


\begin{proof}
By our assumption that $X$ is not strictly convex there exists $x \in S_1[0] \cap \smooth$ such that $\varphi(x)^{-1}[\{1\}] \cap S_1[0] = [x_1,x_2]$ with $x_1 \neq x_2$. By Proposition \ref{abc} and Lemma \ref{notstrictlemma3} there exists $y' \in S_1[0] \cap \smooth$ such that $\varphi(x), \varphi(y')$ are linearly independent and $\varphi(y)= c \varphi(x_1) - d\varphi(x_2)$ for $c,d >0$. If $-c x_1 +2d x_2$ is smooth define $a:=c$, $b := d$ and $y :=y'$. 

Suppose $-c x_1 +2d x_2$ is not smooth. Define the linear isomorphism $T \in L(X)$ where $T(x_1) = -x_1$ and $T(x_2) = 2 x_2$ and $D := T^{-1}(\smooth)$. By part \ref{paper1item1} of Proposition \ref{paper1}, $\smooth^c$ has Lebesgue measure zero. As $T^{-1}$ is linear then $D^c = T^{-1}(\smooth^c)$ must also have Lebesgue measure zero, thus $D \cap \smooth$ is a dense subset in $X$. Since $\varphi$ is continuous if we choose $y \in D \cap \smooth$ sufficiently close to $y'$ then $\varphi(x), \varphi(y)$ will be linearly independent. It then follows by Lemma \ref{notstrictlemma3} that if $y = ax_1 - bx_2$ then $a, b>0$ and by our choice of $y$ we will also have $-a x_1 + 2b x_2 \in \smooth$ as required.
\end{proof}

We define for any $x_1, x_2 \in X$ the following sets:
\begin{enumerate}[(i)]
\item The \textit{open cone},
\begin{align*}
\cone^+ (x_1,x_2) := \{ ax_1 +bx_2 : a,b > 0 \} = \{ rx : x \in (x_1,x_2) , r > 0 \}.
\end{align*}

\item The \textit{closed cone},
\begin{align*}
\cone^+ [x_1,x_2] := \{ ax_1 +bx_2 : a,b \geq 0 \} = \{ rx : x \in [x_1,x_2] , r \geq 0 \}.
\end{align*}

\item The \textit{two-sided open cone},
\begin{align*}
\cone (x_1,x_2) := \cone^+ (x_1,x_2) \cup \cone^+ (-x_1,-x_2).
\end{align*}

\item The \textit{two-sided closed cone},
\begin{align*}
\cone [x_1,x_2] := \cone^+ [x_1,x_2] \cup \cone^+ [-x_1,-x_2].
\end{align*}
\end{enumerate}
If $x_1, x_2$ are linearly independent then the (two-sided) open cone is open and the (two-sided) closed cone is cone.

\begin{lemma}\label{notstrictlemma5}
Let $x_1, x_2 \in S_1[0]$ be linearly independent in a normed plane $X$ and $f \in X^*$ be a support functional of both $x_1$ and $x_2$. Then the following holds:
\begin{enumerate}[(i)]
\item \label{notstrictlemma5item1} If $y \in \cone^+ [x_1,x_2]$ then $\|y\|f$ is a support functional for $y$.

\item \label{notstrictlemma5item2} If $y \in \cone^+ (x_1, x_2)$ then $y$ is smooth.
\end{enumerate}
\end{lemma}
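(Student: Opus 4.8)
The plan is to verify the two claims directly from the defining conditions of a support functional. Since $f$ is a support functional of both $x_1$ and $x_2$, and $\|x_1\|=\|x_2\|=1$, we immediately have $\|f\|=1$ and $f(x_1)=f(x_2)=1$; these three equalities are the only inputs the argument needs.

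For part (\ref{notstrictlemma5item1}) I would write $y = ax_1 + bx_2$ with $a,b \geq 0$ and aim to show that $g:=\|y\|f$ meets the two requirements $\|g\|=\|y\|$ and $g(y)=\|y\|^2$. The first is immediate, as $\|g\| = \|y\|\,\|f\| = \|y\|$. For the second, the key intermediate step is to establish $\|y\| = a+b$: the triangle inequality gives $\|y\| \leq a\|x_1\|+b\|x_2\| = a+b$, while $f(y) = a f(x_1) + b f(x_2) = a+b$ together with $|f(y)| \leq \|f\|\,\|y\| = \|y\|$ supplies the reverse inequality $a+b \leq \|y\|$. Then $g(y) = \|y\| f(y) = \|y\|(a+b) = \|y\|^2$, as required (the case $y=0$ being trivial, since then $g=0=\varphi(0)$).

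For part (\ref{notstrictlemma5item2}), let $y = ax_1 + bx_2$ with $a,b>0$, so that part (\ref{notstrictlemma5item1}) gives $\|y\|=a+b$. To show $y$ is smooth I would show its support functional is unique. Let $g$ be an arbitrary support functional of $y$, so $\|g\|=\|y\|=a+b$ and $g(y)=\|y\|^2=(a+b)^2$. The crucial step is an equality argument: from $g(x_i) \leq \|g\|\,\|x_i\| = a+b$ for $i=1,2$ and
\begin{align*}
(a+b)^2 = g(y) = a\,g(x_1) + b\,g(x_2) \leq a(a+b)+b(a+b) = (a+b)^2,
\end{align*}
the strict positivity of $a$ and $b$ forces $g(x_1)=g(x_2)=a+b$. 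Since $x_1,x_2$ are linearly independent, they form a basis of the plane $X$, so $g$ is the unique linear functional taking the common value $a+b$ on $x_1$ and $x_2$; hence $y$ has a unique support functional and $y \in \smooth$.

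I expect part (\ref{notstrictlemma5item1}) to be entirely routine. The only genuine content lies in the equality argument of part (\ref{notstrictlemma5item2}): recognising that \emph{every} support functional of $y$ must attain the value $\|y\|$ on both $x_1$ and $x_2$, after which two-dimensionality and linear independence pin it down uniquely. Geometrically this reflects that the whole segment $[x_1,x_2]$ lies on the unit sphere (as $f$ supports each of its points) while $y$ lies in its relative interior, so the only supporting line at $y$ is the line carrying that segment.
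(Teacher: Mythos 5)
Your proof is correct. Part (i) matches the paper's argument in substance: the paper normalizes $y$ onto the unit sphere and evaluates $f$ there, and your two-sided estimate $\|y\| = a+b$ is the same computation (it moreover justifies the paper's implicit step that the normalized point of the cone actually lies on the segment $[x_1,x_2]$, which requires knowing that segment sits on the sphere). Part (ii), however, takes a genuinely different route. Both proofs begin identically: any support functional $g$ of $y$ is forced, by the equality argument using $a,b>0$, to attain the value $\|g\|$ at both $x_1$ and $x_2$. But from there the paper argues by contradiction: assuming $y$ is non-smooth, it produces a second support functional $g \neq f$ of $y$ which then supports both $x_1$ and $x_2$; by part (i), suitable multiples of $f$ and $g$ then support \emph{every} point of the open cone, so $\cone(x_1,x_2)$ would be a non-empty open set contained in $\smooth^c$, contradicting the density of $\smooth$ (part \ref{paper1item0} of Proposition \ref{paper1}). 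You instead finish by pure linear algebra: since $x_1,x_2$ is a basis of the plane, the forced values $g(x_1)=g(x_2)=\|y\|$ determine $g$ uniquely (indeed $g = \|y\|f$), so $y$ has exactly one support functional and is smooth. Your finish is more elementary and self-contained --- it avoids the measure-theoretic density result entirely and proves slightly more, namely that $\|y\|f$ is \emph{the} support functional at every point of the open cone --- whereas the paper's finish reuses a density trick it deploys elsewhere (e.g.\ in Lemma \ref{notstrictlemma6}) at the cost of invoking Proposition \ref{paper1}. In fact, your observation exposes a shortcut the paper misses: once its $g$ is known to support both basis vectors $x_1$ and $x_2$, the conclusion $g=f$, and hence the contradiction, is immediate without any appeal to density.
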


\begin{proof}
(\ref{notstrictlemma5item1}): Let $y \in \cone^+ [x_1,x_2]$. By scaling we may assume $\|y\|=1$, thus $y = tx_1 + (1-t)x_2$ for some $t \in [0,1]$. We now note that
\begin{align*}
f(y) = t f(x_1) + (1-t) f(x_2) =1
\end{align*}
and thus $f$ is a support functional for $y$. 

(\ref{notstrictlemma5item2}): Suppose $y \in \cone^+ (x_1, x_2)$ is not smooth. By scaling we may assume $\|y\|=1$, thus $y = tx_1 + (1-t)x_2$ for some $t \in (0,1)$. As $y$ is not smooth then $y$ has support functional $g \in X^*$ with $f \neq g$. If $g$ isn't a support functional for either $x_1$ or $x_2$ then 
\begin{align*}
g(y) = t g(x_1) + (1-t) g(x_2) < 1,
\end{align*}
thus $g$ is a support functional for both $x_1, x_2$. It follows by \ref{notstrictlemma5item1} that $f,g$ are support functionals for all $x \in \cone^+ (x_1, x_2)$, thus $\cone (x_1, x_2) \subseteq \smooth^c$. As $\cone (x_1, x_2)$ is a non-empty open set this contradicts part \ref{paper1item0} of Proposition \ref{paper1}.
\end{proof}

\begin{lemma}\label{notstrictlemma6}
Let $L$ be a line in a normed plane $X$ that does not contain $0$, then the set $\smooth \cap L$ is dense in $L$.
\end{lemma}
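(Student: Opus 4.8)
The plan is to argue by contradiction, exploiting the fact that smoothness is invariant under positive scaling in order to convert a one-dimensional failure of density into a full two-dimensional open set of non-smooth points, where the measure-zero property of $\smooth^c$ can be applied. The key point is that one cannot use the measure-zero statement of part \ref{paper1item0} of Proposition \ref{paper1} directly on $L$, since $L$ is itself a null set in $X$; the homogeneity of $\smooth$ is what makes the argument work.

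First I would suppose, for contradiction, that $\smooth \cap L$ is not dense in $L$. Then some point of $L$ admits a relative neighbourhood in $L$ containing no smooth point, so there exist distinct $y_1, y_2 \in L$ with $(y_1,y_2) \cap \smooth = \emptyset$, that is, $(y_1,y_2) \subseteq \smooth^c$. I would next record that $y_1$ and $y_2$ are linearly independent: if $y_2 = c y_1$ with $c \neq 1$ then the affine line through $y_1$ and $y_2$ would pass through $0$, contradicting $0 \notin L$. (Indeed, since $L$ avoids the origin, any two distinct points of $L$ are linearly independent.)

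The main step is then to inflate this segment using homogeneity. Since $f$ is a support functional of $x$ if and only if $rf$ is a support functional of $rx$ for $r>0$, the point $rx$ is smooth exactly when $x$ is; hence $\smooth^c \setminus \{0\}$ is invariant under multiplication by positive scalars. Combining this with $(y_1,y_2) \subseteq \smooth^c$ and the description $\cone^+(y_1,y_2) = \{ rx : x \in (y_1,y_2),\ r>0 \}$ gives $\cone^+(y_1,y_2) \subseteq \smooth^c$. Because $y_1,y_2$ are linearly independent, $\cone^+(y_1,y_2)$ is a non-empty open subset of $X$, so $\smooth^c$ has non-empty interior. This contradicts part \ref{paper1item0} of Proposition \ref{paper1}, which guarantees that $\smooth^c$ has Lebesgue measure zero. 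Therefore no such segment exists and $\smooth \cap L$ is dense in $L$.

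The substance of the proof lies entirely in the inflation step; everything else is routine. I do not expect a serious obstacle, only the mild bookkeeping of verifying linear independence of the endpoints (which is exactly what $0 \notin L$ supplies) so that the resulting cone genuinely has non-empty interior and the measure-zero property of $\smooth^c$ can be brought to bear.
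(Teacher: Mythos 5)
Your proof is correct and follows essentially the same route as the paper: assume a segment $(y_1,y_2) \subseteq L$ misses $\smooth$, note that $0 \notin L$ forces $y_1,y_2$ to be linearly independent, use homogeneity of smoothness to inflate the segment to the open cone $\cone^+(y_1,y_2) \subseteq \smooth^c$, and contradict the measure-zero property of $\smooth^c$ from Proposition \ref{paper1}. Your write-up just spells out the homogeneity step a little more explicitly than the paper does.
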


\begin{proof}
Suppose otherwise, then there exists distinct $x_1, x_2 \in L$ and $r>0$ such that $(x_1, x_2)$ lies in $L \setminus \smooth$. We note that $x_1, x_2$ must be linearly independent as $0 \notin L$, thus $\cone^+(x_1 , x_2)$ is a non-empty open subset of $X$. Since $\varphi$ is homogeneous it follows that $\cone^+(x_1 , x_2) \subseteq \smooth^c$ which contradicts part \ref{paper1item0} of Proposition \ref{paper1}.
\end{proof}

We are now ready for our key lemma of the section.

\begin{lemma}\label{keylemma1}
Let $X$ be a normed plane that is not strictly convex, then $K_4$ is rigid in $X$.
\end{lemma}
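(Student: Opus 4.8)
The plan is to produce a single well-positioned placement $p$ of $K_4$ that is \emph{independent} (its rigidity matrix has independent rows) and then to upgrade independence to rigidity for free. Since $X$ is non-Euclidean we have $\dim \Iso(X) = 2$ by Proposition \ref{paperiso1}, and $K_4$ satisfies $|E(K_4)| = 6 = 2|V(K_4)| - 2$; hence once we know $K_4$ is independent in $X$, Corollary \ref{isostaticgraphs} (applied with the edge-count condition) yields that $K_4$ is rigid (indeed isostatic). By Proposition \ref{equivind}, independence of a well-positioned framework is equivalent to the vanishing of every stress, so the whole problem reduces to exhibiting one placement carrying no nonzero stress.

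For the construction I would exploit the flat segment supplied by Lemma \ref{notstrictlemma4}: a nontrivial segment $[x_1,x_2] \subset S_1[0]$ with $x_1 \neq x_2$ admitting a common support functional $f$, where $x_1, x_2$ are linearly independent. By Lemma \ref{notstrictlemma5} together with the homogeneity of $\varphi$, every direction in the two-sided open cone $\cone(x_1,x_2)$ is smooth with support functional $\pm f$, while by Lemma \ref{notstrictlemma3} a smooth direction whose $(x_1,x_2)$-coordinates have opposite signs has support functional linearly independent from $f$. I would then label the vertices $1,2,3,4$ and place them so that the three edges of the spanning path $12, 23, 34$ point into $\cone(x_1,x_2)$, making them the ``flat'' edges with support functional $\pm f$, whereas the remaining three edges $13, 14, 24$ point into the two complementary wedges, so that (once well-positioned) their support functionals are independent of $f$. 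Writing positions in the basis $(x_1,x_2)$, one may take $p_1-p_2$ and $p_3-p_4$ in the first quadrant and $p_2-p_3$ in the third, after which $p_1-p_3, p_1-p_4, p_2-p_4$ automatically fall into mixed-sign quadrants. The explicit smooth mixed-sign directions $ax_1-bx_2$ and $-ax_1+2bx_2$ guaranteed by parts (iii)--(iv) of Lemma \ref{notstrictlemma4}, together with the density of smooth points on lines (Lemma \ref{notstrictlemma6}), let me perturb the placement so that all six edges are well-positioned while the open sign conditions, being open, are preserved.

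The decisive structural point is that the flat edges must form a triangle-free subgraph. Since the stress condition at a vertex depends only on the incident support functionals, if all three edges of some triangle were flat (all support functionals $\pm f$), that triangle would inherit a spurious one-parameter stress, exactly as in the degenerate collinear case; choosing the flat edges to be the triangle-free path $12, 23, 34$ avoids this. With the placement in hand I would run the stress elimination by decomposing each vertex equation $\sum_w a_{vw}\varphi_{v,w}=0$ into its $f$-component and a transverse component. At vertices $2$ and $3$ there is a unique non-flat incident edge ($24$ and $13$ respectively), whose support functional has nonzero transverse part, forcing $a_{24}=a_{13}=0$; the transverse equation at vertex $1$ then forces $a_{14}=0$. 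Once all non-flat stresses vanish, the surviving flat edges form the path $12,23,34$, and leaf-peeling on the $f$-components forces $a_{34}, a_{12}, a_{23}$ to vanish in turn. Thus the only stress is the zero stress, $p$ is independent, and $K_4$ is rigid in $X$.

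I expect the main obstacle to be the geometric realization of the second paragraph rather than the stress computation: one must simultaneously place the four points so that exactly the path edges land in $\cone(x_1,x_2)$ (yielding a triangle-free flat set), the three remaining edges land in the opposite wedges, and every edge is smooth. This is precisely the technical content that Lemmas \ref{notstrictlemma3}--\ref{notstrictlemma6} are assembled to deliver, and checking that the open cone and wedge conditions can be met at once---while steering clear of the endpoint directions $\pm x_1, \pm x_2$, where smoothness may fail and the sign criterion of Lemma \ref{notstrictlemma3} degenerates---is the step requiring the most care.
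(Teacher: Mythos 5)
Your strategy is the same as the paper's: extract the flat arc $[x_1,x_2]$ via Lemma \ref{notstrictlemma4}, place $K_4$ so that the three edges carrying the flat support functional $f$ form a spanning path (the paper's flat path is $v_3 v_1 v_2 v_4$ rather than your $v_1v_2v_3v_4$, an inessential relabelling), arrange the other three edges in mixed-sign directions so that Lemma \ref{notstrictlemma3} makes their support functionals transverse to $f$, eliminate stresses vertex by vertex, and upgrade independence to isostaticity via Corollary \ref{isostaticgraphs}. Your stress elimination is sound given the configuration you describe, and your identification of triangle-freeness of the flat edge set as the decisive structural point is exactly right.

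There is, however, one step that fails as written: the claim that once $p_1-p_2$ and $p_3-p_4$ lie in the open first quadrant and $p_2-p_3$ in the open third quadrant (coordinates with respect to $(x_1,x_2)$), the differences $p_1-p_3$, $p_1-p_4$, $p_2-p_4$ \emph{automatically} fall into mixed-sign quadrants. They do not: taking $p_1-p_2 = p_3-p_4 = (2,2)$ and $p_2-p_3 = (-1,-1)$ gives $p_1-p_3 = p_2-p_4 = (1,1)$ and $p_1-p_4 = (3,3)$, so all six edge directions lie in $\cone^+(x_1,x_2)$, every support functional equals $f$, and the framework visibly carries nonzero stresses. The mixed-sign requirements are three additional open constraints that must be imposed by choosing magnitudes, not consequences of the path constraints; they can be met, e.g.~$p_1-p_2 = p_3-p_4 = (2,1)$ and $p_2-p_3 = (-1,-10)$ yields $p_1-p_3 = p_2-p_4 = (1,-9)$ and $p_1-p_4 = (3,-8)$, all mixed. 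This is precisely what the paper's explicit one-parameter placement $p^r$ accomplishes: it writes down coordinates for which the cone memberships and the mixed-sign pattern of the exceptional edge are verified by direct computation, and then invokes Lemma \ref{notstrictlemma6} to choose $r$ so that this edge is also smooth. With such an explicit choice inserted (and your appeal to Lemma \ref{wellpos} plus openness of the sign conditions to make the mixed-sign edges well-positioned), your argument goes through.
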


\begin{proof}
Choose $x, y \in S_1[0] \cap \smooth$ as in Lemma \ref{notstrictlemma4} and let $V(K_4) = \{v_1, v_2, v_3, v_4 \}$. Define for $r >0$ the placement $p^r$ of $K_4$ where:
\begin{enumerate}[(i)]
\item $p^r_{v_1} =0$,

\item $p^r_{v_2} = ax_1 - r y = \left(1-r\right)a x_1 + r b x_2$,

\item $p^r_{v_3} = bx_1 + r y = r a x_1 + \left(1-r\right) b x_2$,

\item $p^r_{v_4} = \left(1- 2 r\right) y = \left(1- 2 r\right) a x_1 - \left(1- 2 r\right) b x_2$.
\end{enumerate}
We note for all $0 < r < \frac{1}{3}$ the following holds:
\begin{enumerate}[(i)]
\item $p^r_{v_2} - p^r_{v_1}$ , $p^r_{v_3} - p^r_{v_1}$, $p^r_{v_2} - p^r_{v_4} \in \cone^+ (x_1, x_2)$.

\item $p^r_{v_2} - p^r_{v_3}$ and $p^r_{v_4} - p^r_{v_1}$ are positive scalar multiples of $y$.

\item $p^r_{v_4} - p^r_{v_3} = \left(1- 3r\right) a x_1 - \left(2- 3r \right) b x_2 \notin \cone [x_1, x_2]$.
\end{enumerate}
Define the line 
\begin{align*}
L := \{ a x_1 - 2b x_2 + 3 r( -a x_1 + bx_2) : r \in \mathbb{R} \},
\end{align*}
then by Lemma \ref{notstrictlemma6} it follows we may choose $r \in \left(0 , \frac{1}{3} \right)$ such that $p^r_{v_4} - p^r_{v_3}$ is smooth. Fix $r$ so that this holds and define $\varphi^r_{v,w}$ to be the support functional of $vw$ in $(K_4,p^r)$. We now note the following holds:
\begin{enumerate}[(i)]
\item $\varphi^r_{ v_2 , v_1}$ , $\varphi^r_{ v_3 , v_1}$, $\varphi^r_{ v_2 , v_4}= \varphi (x)$ (Lemma \ref{notstrictlemma5}).

\item $\varphi^r_{ v_2 , v_3}$, $\varphi^r_{ v_4 , v_1} = \varphi(y)$.

\item $\varphi^r_{ v_4 , v_3} = f$ for some $f \in S_1^*[0]$ where $f, \varphi(x)$ are linearly independent (part \ref{notstrictlemma3item2} of Lemma \ref{notstrictlemma3}).
\end{enumerate}
We now obtain the following rigidity matrix for $R(K_4,p^r)$:
\begin{align*}
  \bbordermatrix{  & \scriptstyle{v_1} & \scriptstyle{v_2}  & \scriptstyle{v_3}  &  \scriptstyle{v_4} \cr	
\scriptstyle{v_1 v_2} &    -\varphi(x) & \varphi(x) & 0           & 0           \cr
\scriptstyle{v_1 v_3} &    -\varphi(x) & 0          & \varphi(x)  & 0           \cr
\scriptstyle{v_1 v_4} &		-\varphi(y) & 0          & 0           & -\varphi(y) \cr
\scriptstyle{v_2 v_3} &		0           & \varphi(y) & -\varphi(y) & 0           \cr
\scriptstyle{v_2 v_4} &		0           & \varphi(x) & 0           & -\varphi(x) \cr
\scriptstyle{v_3 v_4} &		0           & 0          & f           & -f          \cr}
\end{align*}
As $\varphi(x),\varphi(y)$ are linearly independent and $f, \varphi(x)$ are linearly independent then it follows that $R(K_4,p^r)$ has independent rows, thus $(K_4, p^r)$ is independent. Since $K_4$ is independent in $X$ it follows by Corollary \ref{isostaticgraphs} that $K_4$ is isostatic as required.
\end{proof}

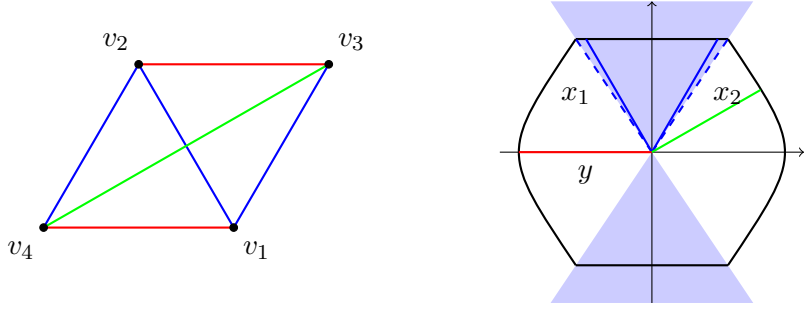
\begin{figure}
\[
\begin{tikzpicture}
\draw[thick, blue] (-5,0) -- (-3.75,2.1651) -- (-2.5,0) -- (-1.25,2.1651); 
\draw[thick, red] (-5,0) -- (-2.5,0);
\draw[thick, red] (-3.75,2.1651) -- (-1.25,2.1651);
\draw[thick, green] (-5,0) -- (-1.25,2.1651);

\node at (-5.3,-0.3) {$v_4$};
\node at (-4.05,2.4651) {$v_2$};
\node at (-2.2,-0.3) {$v_1$};
\node at (-0.95,2.4651) {$v_3$};

\draw[fill] (-5,0) circle [radius=0.05];
\draw[fill] (-3.75,2.1651) circle [radius=0.05];
\draw[fill] (-2.5,0) circle [radius=0.05];
\draw[fill] (-1.25,2.1651) circle [radius=0.05];

\fill[blue, opacity=0.2] (3,1) -- (1.666666666,3) -- (4.33333333,3) -- cycle;
\fill[blue, opacity=0.2] (3,1) -- (1.666666666,-1) -- (4.33333333,-1) -- cycle;

\draw[->] (3,-1) -- (3,3);
\draw[->] (1,1) -- (5,1);

\draw[thick] (2,-0.5) -- (4,-0.5);
\draw[thick] (2,2.5) -- (4,2.5);
\draw[thick] (2,-0.5) .. controls (1,1) .. (2,2.5);
\draw[thick] (4,-0.5) .. controls (5,1) .. (4,2.5);

\draw[thick, blue] (3,1) -- (3.86603,2.5); 
\draw[thick, blue] (3,1) -- (2.13397,2.5); 

\draw[thick, dashed, blue] (3,1) -- (2,2.5); 
\draw[thick, dashed, blue] (3,1) -- (4,2.5);

\draw[thick, red] (3,1) -- (1.25,1);

\draw[thick, green] (3,1) -- ++(30:1.65cm);

\node at (2.125,0.7) {$y$};
\node at (2,1.75) {$x_1$};
\node at (4,1.75) {$x_2$};

\end{tikzpicture}
\]
\caption{A diagram to illustrate Lemma \ref{keylemma1} applied to a not strictly convex normed plane $X$. (Left): The constructed infinitesimally rigid framework $(K_4,p^r)$. (Right): The unit ball of $X$. The edge directions from our placement have been added as their corresponding colour lines, $x_1,x_2$ have been added as blue dashed lines and $\cone[x_1, x_2]$ is shown as the blue area indicated.}
\end{figure}

\subsection{The rigidity of \texorpdfstring{$K_4$}{K4} in strictly convex but not smooth normed planes}\label{The rigidity of K4 in strictly convex but not smooth normed planes}

The following technical lemmas will be of use later. 

\begin{lemma}\label{technicalk4lemma1}
Suppose we have a placement $p$ of a $K_4$ graph with vertices $v_1, v_2, v_3, v_4$ where all edges but $v_1 v_4$ are well-positioned. Further suppose that $\varphi_{v_1, v_2} = \varphi_{v_3, v_4} = \varphi(x)$, $\varphi_{v_1, v_3} = \varphi_{v_2, v_4} = \varphi(y)$ and $\varphi_{v_2 ,v_3} = \varphi(\omega)$ where $\varphi(x), \varphi(y), \varphi(\omega)$ are pairwise independent support functions and $\varphi(\omega) = a \varphi(x) + b \varphi(y)$ for some $a,b \in \mathbb{R}$. Let $\phi$ be the set of support functionals of $(K_4,p)$ with the pseudo-support functional $\varphi_{v_1,v_4}$. If $\varphi_{v_1 ,v_4}$ and $a \varphi(x) - b \varphi(y)$ are linearly independent then $R(K_4, p)^\phi$ has row independence.
\end{lemma}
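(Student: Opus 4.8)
The plan is to use the equivalence noted immediately after the definition of pseudo-stresses (following Proposition \ref{equivind}): the matrix $R(K_4,p)^\phi$ has row independence if and only if the only pseudo-stress of $(K_4,p)^\phi$ is the zero vector. So I would take an arbitrary pseudo-stress $(a_{vw})_{vw \in E(K_4)}$ and show that every coefficient must vanish. Writing out the pseudo-stress condition $\sum_{w \in N_{K_4}(v)} a_{vw}\varphi_{v,w} = 0$ at each of the four vertices, and using $\varphi_{w,v} = -\varphi_{v,w}$ (homogeneity of $\varphi$), produces the four equations in $X^*$
\begin{align*}
&a_{v_1v_2}\varphi(x) + a_{v_1v_3}\varphi(y) + a_{v_1v_4}\varphi_{v_1,v_4} = 0, \\
&-a_{v_1v_2}\varphi(x) + a_{v_2v_3}\varphi(\omega) + a_{v_2v_4}\varphi(y) = 0, \\
&-a_{v_1v_3}\varphi(y) - a_{v_2v_3}\varphi(\omega) + a_{v_3v_4}\varphi(x) = 0, \\
&-a_{v_1v_4}\varphi_{v_1,v_4} - a_{v_2v_4}\varphi(y) - a_{v_3v_4}\varphi(x) = 0.
\end{align*}

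First I would exploit the conditions at $v_2$ and $v_3$. Since $X$ is a plane, $X^*$ is two-dimensional, so the linearly independent pair $\varphi(x),\varphi(y)$ is a basis; substituting $\varphi(\omega) = a\varphi(x) + b\varphi(y)$ into the second and third equations and comparing the coefficients of $\varphi(x)$ and $\varphi(y)$ forces
\begin{align*}
a_{v_1v_2} = a\,a_{v_2v_3}, \qquad a_{v_2v_4} = -b\,a_{v_2v_3}, \qquad a_{v_3v_4} = a\,a_{v_2v_3}, \qquad a_{v_1v_3} = -b\,a_{v_2v_3}.
\end{align*}
Thus all coefficients except $a_{v_1v_4}$ are determined by the single scalar $a_{v_2v_3}$.

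Finally, feeding these relations into the equation at $v_1$ collapses it to
\begin{align*}
a_{v_2v_3}\bigl(a\varphi(x) - b\varphi(y)\bigr) + a_{v_1v_4}\,\varphi_{v_1,v_4} = 0.
\end{align*}
At this point the hypothesis is exactly what is needed: because $\varphi_{v_1,v_4}$ and $a\varphi(x) - b\varphi(y)$ are linearly independent (hence both nonzero), the displayed relation forces $a_{v_2v_3} = 0$ and $a_{v_1v_4} = 0$, and then the relations above give that all remaining coefficients vanish as well; the equation at $v_4$ is then automatically satisfied, as it must be since the four vertex equations sum to zero. I expect the only delicate point to be the sign bookkeeping: one must track $\varphi_{w,v} = -\varphi_{v,w}$ carefully so that combining the $v_2$ and $v_3$ conditions inside the $v_1$ equation produces precisely $a\varphi(x) - b\varphi(y)$ (with the sign on $b$ reversed relative to $\varphi(\omega)$), which is what makes the stated independence hypothesis the natural and correct one rather than an independence involving $\varphi(\omega)$ itself.
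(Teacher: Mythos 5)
Your proposal is correct and follows essentially the same route as the paper's proof: both take an arbitrary pseudo-stress, use the conditions at $v_2$ and $v_3$ (the corresponding matrix columns) together with $\varphi(\omega) = a\varphi(x) + b\varphi(y)$ and the linear independence of $\varphi(x),\varphi(y)$ to express every coefficient as a multiple of $a_{v_2v_3}$, and then substitute into the condition at $v_1$ to obtain $a_{v_2v_3}\bigl(a\varphi(x)-b\varphi(y)\bigr) + a_{v_1v_4}\varphi_{v_1,v_4} = 0$, which the independence hypothesis kills. Your sign bookkeeping and the resulting relations match the paper's exactly, so there is nothing to fix.
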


\begin{proof}
We see that with the given parameters $R(K_4,p)^\phi$ is of the form
\begin{align*}
  \bbordermatrix{  & \scriptstyle{v_1} & \scriptstyle{v_2}  & \scriptstyle{v_3}  &  \scriptstyle{v_4} \cr	
\scriptstyle{v_1 v_2} &    \varphi(x)        & -\varphi(x)& 0          & 0                  \cr
\scriptstyle{v_1 v_3} &    \varphi(y)        & 0          & -\varphi(y)& 0                  \cr
\scriptstyle{v_1 v_4} &		\varphi_{v_1, v_4}& 0          & 0          & -\varphi_{v_1, v_4}\cr
\scriptstyle{v_2 v_3} &		0                 & \varphi(\omega) & -\varphi(\omega)& 0        \cr
\scriptstyle{v_2 v_4} &		0                 & \varphi(y) & 0          & -\varphi(y)        \cr
\scriptstyle{v_3 v_4} &		0                 & 0          & \varphi(x) & -\varphi(x)        \cr}
\end{align*}
Suppose $(c_{vw})_{vw \in E(G)}$ is a pseudo-stress of $(K_4,p)^\phi$. By the second column
\begin{align*}
-c_{v_1 v_2} \varphi(x) + c_{v_2 v_3} \varphi(\omega) + c_{v_2 v_4} \varphi(y) = (c_{v_2 v_3}a -c_{v_1 v_2}) \varphi(x) + (c_{v_2 v_3}b +c_{v_2 v_4}) \varphi(y) =0,
\end{align*}
thus as $\varphi(x), \varphi(y)$ are linearly independent, $c_{v_1 v_2}=c_{v_2 v_3}a$ and $c_{v_2 v_4} = -c_{v_2 v_3}b$. By the third column
\begin{align*}
-c_{v_1 v_3} \varphi(y) - c_{v_2 v_3} \varphi(\omega) + c_{v_3 v_4} \varphi(x) = -(c_{v_2 v_3}a -c_{v_3 v_4}) \varphi(x) - (c_{v_2 v_3}b +c_{v_1 v_3}) \varphi(y) =0,
\end{align*}
thus as $\varphi(x), \varphi(y)$ are linearly independent, $c_{v_3 v_4}=c_{v_2 v_3}a$ and $c_{v_1 v_3} = -c_{v_2 v_3}b$. By the first column combined with our previous results we see that
\begin{align*}
c_{v_1 v_2} \varphi(x) + c_{v_1 v_3} \varphi(y) + c_{v_1 v_4} \varphi_{v_1, v_4} = c_{v_2 v_3}(a \varphi(x) -b \varphi(y)) + c_{v_1 v_4}\varphi_{v_1, v_4} =0.
\end{align*}
Thus as $\varphi_{v_1 ,v_4}$ is linearly independent of $a \varphi(x) - b \varphi(y)$, $c_{v_2 v_3} = c_{v_1 v_4}=0$. This implies $c =0$ and thus $R(K_4,p)^\phi$ has row independence.
\end{proof}

\begin{lemma}\label{technicalk4lemma2}
For all $z \in X$ there exists $x,y \in \smooth$ so that $x +y =z$ and $x-y \in \smooth$. If $z \notin \smooth \cup \{0\}$ then $x,y$ are linearly independent.
\end{lemma}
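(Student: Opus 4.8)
The plan is to reduce the three simultaneous membership requirements to a single choice of point. Writing $y = z - x$, the three conditions $x \in \smooth$, $y \in \smooth$, and $x - y \in \smooth$ become constraints on the single variable $x$, namely
\begin{align*}
x \in \smooth, \qquad z - x \in \smooth, \qquad 2x - z \in \smooth.
\end{align*}
I would then show that each of these fails only on a Lebesgue-null set and select a valid $x$ from the (dense) complement of their union.

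For the null-set estimates: the first constraint fails exactly on $\smooth^c$, which is null by part \ref{paper1item0} of Proposition \ref{paper1}. The second fails precisely when $x$ lies in the image of $\smooth^c$ under the affine involution $s \mapsto z - s$, and the third when $x$ lies in the image of $\smooth^c$ under the affine bijection $s \mapsto \tfrac{1}{2}(z + s)$. Since invertible affine maps carry null sets to null sets, both of these bad sets are null as well. Hence the union of the three bad sets is null, its complement is dense, and any $x$ in the complement yields $x, y, x - y \in \smooth$ with $y = z - x$. Note that $x - y \in \smooth$ forces $x - y \neq 0$, so $x \neq y$ is automatic, and $x, y \in \smooth$ are both nonzero.

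For the independence clause, suppose $z \notin \smooth \cup \{0\}$, so in particular $z \neq 0$. Since $x$ and $y = z - x$ are both nonzero, they are linearly dependent if and only if they both lie on the line $\spann \{z\}$, equivalently $x \in \spann \{z\}$. Because $X$ is a plane and $z \neq 0$, the set $\spann \{z\}$ is a $1$-dimensional subspace and hence has measure zero. Adjoining it to the three bad sets above keeps the union null, so I may choose $x$ avoiding $\spann \{z\}$ as well; this $x$ produces the desired linearly independent pair $x, y$.

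The entire argument is driven by the fact that $\smooth^c$ is null together with the stability of this property under the affine changes of variable relating $x$, $y = z - x$, and $x - y = 2x - z$. The only point requiring genuine care is the independence clause: one must recognise that the offending configurations form exactly the line $\spann \{z\}$ and invoke $z \neq 0$ (guaranteed by the hypothesis $z \notin \smooth \cup \{0\}$) to know that this line is null in the plane. Beyond assembling these finitely many null sets correctly, I do not anticipate a substantial obstacle.
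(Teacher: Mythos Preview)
Your argument is correct and follows essentially the same measure-theoretic route as the paper: both exploit that $\smooth^c$ is Lebesgue-null and that this is preserved under affine bijections, then select a point in the dense complement of a finite union of such null sets. The paper differs only cosmetically---it parametrizes by $w=x-y$ rather than by $x$, treats the cases $z=0$ and $z\in\smooth$ by explicit construction, and for the independence clause observes directly that linear dependence would make $z$ a nonzero multiple of the smooth point $x$ (hence smooth), which in fact shows your fourth null set $\spann\{z\}$ is redundant since it is already contained in $\smooth^c$.
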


\begin{proof}
If $z =0$ choose any $x \in \smooth$ and define $y := -x$; similarly if $z \in \smooth$ let $x :=2z$ and $y := -z$. Now suppose $z \notin \smooth \cup \{0\}$. It follows from part \ref{paper1item0} of Proposition \ref{paper1} the sets $z + \smooth$ and $z - \smooth$ have Lebesgue measure zero complements, thus the complement of $(\smooth - z) \cap (\smooth + z)$ has Lebesgue measure zero; it follows that the set is non-empty and we may choose $w \in (\smooth - z) \cap (\smooth + z)$. If we define $x:= \frac{1}{2}(z+w)$ and $y:= \frac{1}{2}(z-w)$ then $x$, $y$, $x-y \in \smooth$ and $z =x+y$. If $x,y$ are linearly dependent then $z$ is smooth, a contradiction, thus $x,y$ are linearly independent. 
\end{proof}

\begin{lemma}\label{technicalk4lemma3}
Let $X$ be a strictly convex normed plane, $z \neq 0$ be non-smooth with $\|z \|=1$, $\varphi[z] =[f,g]$ and define $X^+ := (f-g)^{-1}(0, \infty)$, $X^-:= (f-g)^{-1}(-\infty,0)$. If $(z_n)_{n \in \mathbb{N}}$ is a sequence of smooth points that converges to $z$ with $\| z_n \|=1$, then the following properties hold:
\begin{enumerate}[(i)]
\item \label{tl1} $(\varphi(z_n))_{n \in \mathbb{N}}$ has a convergent subsequence.

\item \label{tl2} If $\varphi(z_n) \rightarrow h$ as $n \rightarrow \infty$ then $h =f$ or $g$.

\item \label{tl3} If $\varphi(z_n) \rightarrow h$ as $n \rightarrow \infty$ and $\varphi(z_n) \in X^+$ for large enough $n$ then $h=f$.

\item \label{tl4} If $\varphi(z_n) \rightarrow h$ as $n \rightarrow \infty$ and $\varphi(z_n) \in X^-$ for large enough $n$ then $h=g$.
\end{enumerate}
\end{lemma}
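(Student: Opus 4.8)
The plan is to reduce all four parts to a single inequality expressing that, for each $n$, the support functional $\varphi(z_n)$ attains its maximum over the unit ball at $z_n$, while the limit point $z$ also lies in the unit ball. First, part \ref{tl1} is immediate: since $\varphi(z_n)$ is a support functional of $z_n$ we have $\|\varphi(z_n)\|=\|z_n\|=1$, so $(\varphi(z_n))_{n\in\mathbb{N}}$ is bounded in the finite-dimensional space $X^*$ and has a convergent subsequence by Bolzano--Weierstrass. Before the other parts I would record two consequences of strict convexity. Since $f\neq g$ are unit functionals with $f(z)=g(z)=1$, they are linearly independent (if $g=\lambda f$ then $|\lambda|=1$, and $\lambda=-1$ would give $g(z)=-1$), so $\{f,g\}$ is a basis of $X^*$. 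Moreover $\varphi(z_n)\notin\{f,g\}$ for every $n$: if, say, $\varphi(z_n)=f$, then $f$ would support both $z_n$ and $z$, forcing $\|\tfrac12(z_n+z)\|=1$ with $z_n\neq z$ on the unit sphere, contradicting strict convexity. As $z_n$ is smooth its only support functional is $\varphi(z_n)$, so this yields the strict inequalities $f(z_n)<1$ and $g(z_n)<1$.

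Next I would derive the master inequality. Writing $\varphi(z_n)=\alpha_n f+\beta_n g$ and setting $u_n:=f(z_n)$, $v_n:=g(z_n)$, the bounds $\|\varphi(z_n)\|=\|z\|=1$ give $\varphi(z_n)(z)\le 1$, whereas $\varphi(z_n)(z_n)=\|z_n\|^2=1$. Subtracting and using $f(z)=g(z)=1$ gives
\begin{align*}
\alpha_n(1-u_n)+\beta_n(1-v_n)\le 0,
\end{align*}
where $1-u_n>0$ and $1-v_n>0$ by the previous paragraph. For part \ref{tl2}, if $\varphi(z_n)\to h$ then $\|h\|=1$, and since $\varphi(z_n)(z)=1+\varphi(z_n)(z-z_n)\to 1$ we also get $h(z)=1$; hence $h\in\varphi[z]=[f,g]$, say $h=Af+Bg$ with $A,B\ge 0$ and $A+B=1$. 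If $h$ lay in the open segment then $A,B>0$, so $\alpha_n,\beta_n>0$ for large $n$, making both summands in the master inequality nonnegative; they would then be forced to vanish, giving $u_n=v_n=1$ and contradicting $u_n,v_n<1$. Therefore $h\in\{f,g\}$.

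For parts \ref{tl3}--\ref{tl4} I read the hypothesis as $z_n\in X^+$, i.e.\ $(f-g)(z_n)>0$, equivalently $u_n>v_n$ (the printed ``$\varphi(z_n)\in X^+$'' cannot be meant literally, since $\varphi(z_n)\in X^*$ while $X^+\subseteq X$). Suppose $u_n>v_n$ for large $n$ but, for contradiction, $\varphi(z_n)\to g$, so $\alpha_n\to 0$ and $\beta_n\to 1$. Then $\beta_n(1-v_n)>0$, so the master inequality forces $\alpha_n<0$ and
\begin{align*}
\frac{1-v_n}{1-u_n}\le\frac{-\alpha_n}{\beta_n}\longrightarrow 0,
\end{align*}
whence $1-v_n<1-u_n$, i.e.\ $v_n>u_n$, for large $n$ --- contradicting $z_n\in X^+$. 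Combined with part \ref{tl2} this gives $h=f$, and part \ref{tl4} is identical after exchanging $f\leftrightarrow g$ and $u\leftrightarrow v$ (equivalently $X^+\leftrightarrow X^-$).

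The routine content is part \ref{tl1} and the algebraic bookkeeping in the basis $\{f,g\}$. I expect the genuine obstacle to be the \emph{orientation} in parts \ref{tl3}--\ref{tl4}: namely that it is precisely the $f$-side of $\ker(f-g)$, and not the $g$-side, on which $\varphi(z_n)\to f$. The ratio estimate above is exactly what fixes this correspondence, and strict convexity --- entering through the strict bounds $u_n,v_n<1$ --- is what makes each comparison strict rather than merely nonstrict.
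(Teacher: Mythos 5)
Your proof is correct, and your reading of the hypothesis in parts (iii)--(iv) as $z_n \in X^+$ is the intended one: the paper's own proof invokes precisely ``$z_n \in X^+$'', and the application in Lemma \ref{keylemma2} feeds in the points $p^k_{v_4}-p^k_{v_1}$, not their support functionals.

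Your route shares the paper's skeleton --- (i) by compactness of $S^*_1[0]$, (ii) by showing $h$ supports $z$ and then excluding the relative interior of $[f,g]$, (iii)--(iv) by writing $\varphi(z_n)=\alpha_n f+\beta_n g$ and arguing by contradiction --- but the engine driving (ii)--(iv) is genuinely different. The paper runs a sign case analysis in (iii): when $\alpha_n,\beta_n\geq 0$ it shows $\varphi(z_n)$ supports $z$ as well as $z_n$, contradicting strict convexity (or the smoothness of $z_n$); when $\alpha_n<0<\beta_n$ it computes directly that $\varphi(z_n)(z_n)<1$, using $z_n\in X^+$. Your single master inequality $\alpha_n(1-u_n)+\beta_n(1-v_n)\leq 0$, combined with the strict bounds $u_n,v_n<1$ (the one place strict convexity enters), replaces both cases at once: it forces $\alpha_n<0$ automatically, and the ratio estimate fixes the orientation. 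This buys uniformity --- (ii), (iii) and (iv) all fall out of one inequality, with no case split and with the role of strict convexity isolated cleanly up front. What the paper's version buys in exchange is that its contradictions are pointwise in $n$: it effectively proves that \emph{every} smooth unit vector in $X^+$ has support functional with $g$-coefficient at most $0$, whereas your estimate $-\alpha_n/\beta_n\to 0$ is asymptotic; nothing in the lemma or its application needs the stronger pointwise fact, so this costs you nothing here. One small step you should make explicit: in deducing $\varphi(z_n)\notin\{f,g\}$ (hence $u_n,v_n<1$) you need $z_n\neq z$, which holds because $z_n$ is smooth and $z$ is not --- the paper states this; you use it silently.
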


\begin{proof}
(\ref{tl1}): This holds as $S^*_1[0]$ is compact.

(\ref{tl2}): Choose any $\epsilon >0$, then we may choose $N \in \mathbb{N}$ such that for all $n \geq N$
\begin{align*}
\|h - \varphi(z_n)\| < \frac{\epsilon}{2} \quad \text{ and } \quad \|z - z_n\| < \frac{\epsilon}{2}.
\end{align*}
We now note that $h$ is a support functional for $z$ as $\|h\|=1$ and 
\begin{eqnarray*}
|1 - h(z)| &=& |\varphi(z_n)(z_n) - h(z)| \\
&\leq& |\varphi(z_n)(z_n) - \varphi(z_n)(z)| + |\varphi(z_n)(z) - h(z)| \\
&\leq& \|z_n - z \| + \|\varphi(z_n) - h\| \\
&<& \epsilon,
\end{eqnarray*}
thus $h \in [f,g]$. 

If $h$ lies in the interior of $[f,g]$ then for large enough $n \in \mathbb{N}$ we would have $\varphi(z_n)$ in the interior of $[f,g]$ (with respect to $S^*_1[0]$), thus $\varphi(z_n)$ is a support functional of $z$. If $z \neq z_n$ then we note that $[z, z_n] \in S_1[0]$ as for any $t \in [0,1]$
\begin{align*}
1 = \varphi(z_n)(t z + (1-t)z_n) \leq \|t z + (1-t)z_n\| \leq 1,
\end{align*}
however this contradicts the strict convexity of $X$. If $z=z_n$ then as $z_n$ is smooth $z$ is also smooth, however this contradicts the assumption that $z$ is non-smooth. As the only non-interior points are $f,g$ the result follows.

(\ref{tl3}): Suppose for contradiction that $\varphi(z_n) \rightarrow g$ as $n \rightarrow \infty$. As $f \neq g$ then they must be linearly independent (as otherwise $0 \in [f,g] \subset S^*_1[0]$), thus for each $n \in \mathbb{N}$ there exists $a_n,b_n \in \mathbb{R}$ such that $\varphi(z_n) = a_n f + b_n g$; since $\varphi(z_n) \rightarrow g$ then for large enough $n$ we have that $b_n >0$. We note that if $a_n, b_n  \geq 0$ for large enough $n$ then
\begin{eqnarray*}
\|\varphi(z_n)\| &=& \|a_n f + b_n g\| \\
&\leq& a_n + b_n \\
&=& a_n f(z) + b_n g(z) \\
&=& \varphi(z_n)(z) \\
&\leq& \|\varphi(z_n)\|,
\end{eqnarray*}
thus $\varphi(z_n)$ is a support functional of $z$ which as noted before either contradicts that $X$ is strictly convex or that $z_n$ is smooth and $z$ is non-smooth. Suppose that for large enough $n$ we have $a_n <0 <b_n$. We now note that
\begin{eqnarray*}
\varphi(z_n)(z_n) &=& a_n f(z_n) +b_n g(z_n) \\
&=& a_n (f-g)(z_n) + (a_n +b_n)g(z_n) \\
&<& (a_n +b_n)g(z_n) \quad \text{ as } z_n \in X^+ \\
&\leq& a_n + b_n \\
&=& \|b_n g\| - \|-a_n f\| \\
&\leq& \| a_n f + b_n g\| \\
&=& \|\varphi(z_n)\|
\end{eqnarray*}
which implies $\varphi(z_n)(z_n) <1$ contradicting that $\varphi(z_n)$ is the support functional of $z_n$ and $\| z_n \|=1$. It follows that $\varphi(z_n) \nrightarrow g$, thus $\varphi(z_n) \rightarrow f$ by \ref{tl2}. 

\ref{tl4} now follows by the same method given above.
\end{proof}

We are now ready for our key lemma. 

\begin{lemma}\label{keylemma2}
Let $X$ be a strictly convex normed plane with non-zero non-smooth points, then $K_4$ is rigid in $X$.
\end{lemma}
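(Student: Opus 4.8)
The plan is to realise an isostatic (hence infinitesimally rigid) placement of $K_4$ as a framework limit of well-positioned placements whose pseudo-rigidity matrix is already known to have independent rows. Concretely, I would build a degenerate ``parallelogram'' placement $p$ of $K_4$ in which the only non-well-positioned edge is the diagonal $v_1v_4$, pointing along a non-smooth direction $z$, while the remaining five edges point along smooth directions arranged exactly as in Lemma \ref{technicalk4lemma1}. Choosing a pseudo-support functional for $v_1v_4$ so that the hypotheses of Lemma \ref{technicalk4lemma1} hold shows $R(K_4,p)^\phi$ has independent rows. I would then approximate $(K_4,p)^\phi$ by genuine well-positioned frameworks and apply Proposition \ref{framelimit} together with Corollary \ref{isostaticgraphs}.

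For the construction, fix a non-smooth $z$ with $\|z\|=1$ and $\varphi[z]=[f,g]$, $f\neq g$, which exist by hypothesis. By Lemma \ref{technicalk4lemma2} write $z=x+y$ with $x,y,x-y\in\smooth$ and $x,y$ linearly independent, and set $\omega:=y-x$. I place
\begin{align*}
p_{v_1}=0,\qquad p_{v_2}=-x,\qquad p_{v_3}=-y,\qquad p_{v_4}=-x-y.
\end{align*}
Then $p_{v_1}-p_{v_2}=x$, $p_{v_3}-p_{v_4}=x$, $p_{v_1}-p_{v_3}=y$, $p_{v_2}-p_{v_4}=y$ and $p_{v_2}-p_{v_3}=\omega$ are all smooth, while $p_{v_1}-p_{v_4}=z$ is non-smooth. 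Thus $(K_4,p)$ has the shape of Lemma \ref{technicalk4lemma1}, its five well-positioned edges carrying support functionals proportional to $\varphi(x),\varphi(y),\varphi(\omega)$, which are pairwise linearly independent by Proposition \ref{str}. Writing $\varphi(\omega)=a\varphi(x)+b\varphi(y)$, the functional $h:=a\varphi(x)-b\varphi(y)$ is non-zero (else $\varphi(\omega)=0$), and since $f,g$ are linearly independent (the segment $[f,g]\subset S^*_1[0]$ avoids $0$) they cannot both be parallel to $h$. I take as the pseudo-support functional $\varphi_{v_1,v_4}$ whichever of $f,g$ is not parallel to $h$, so that Lemma \ref{technicalk4lemma1} applies and $R(K_4,p)^\phi$ has independent rows.

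It remains to exhibit $(K_4,p)^\phi$ as a framework limit. Fixing $v_1,v_2,v_3$ and applying Lemma \ref{wellpos2} with $V=\{v_1,v_2,v_3\}$, the set of positions $q$ for $v_4$ making all six edges well-positioned is dense. Since $(f-g)(p_{v_1}-p_{v_4})=(f-g)(z)=f(z)-g(z)=0$, the open half-plane $H:=\{q:(f-g)(p_{v_1}-q)>0\}$ (and the opposite half-plane in the case $\varphi_{v_1,v_4}=g$) has $p_{v_4}$ on its boundary, so I may pick well-positioned $q_n\in H$ with $q_n\to p_{v_4}$ and set $p^n_{v_4}:=q_n$, $p^n_{v_i}:=p_{v_i}$ otherwise. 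For the five smooth edges the directions converge to smooth limits, so continuity of $\varphi$ (Proposition \ref{paper1}) gives convergence of their support functionals to $\varphi(x),\varphi(y),\varphi(\omega)$; for $v_1v_4$ the unit direction converges to $z$ while lying in $X^+$, so Lemma \ref{technicalk4lemma3} forces $\varphi^n_{v_1,v_4}\to f$. Hence $(K_4,p^n)\to(K_4,p)^\phi$, and Proposition \ref{framelimit} yields an $N$ with $(K_4,p^n)$ independent for all $n\geq N$. As $X$ is non-Euclidean (being non-smooth) we have $|E(K_4)|=6=2|V(K_4)|-2$, so Corollary \ref{isostaticgraphs} upgrades independence to isostaticity, and $K_4$ is rigid.

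The main obstacle is this last step: keeping all six edges well-positioned simultaneously while steering the degenerate diagonal onto the non-smooth direction $z$ from the side dictated by the chosen endpoint of $\varphi[z]$. Both requirements are reconciled by the single observation that the correct side is an open half-plane having the target position $p_{v_4}$ exactly on its boundary, which lets the density supplied by Lemma \ref{wellpos2} and the one-sided limiting behaviour of Lemma \ref{technicalk4lemma3} operate together.
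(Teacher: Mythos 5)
Your proposal is correct and is essentially the paper's own argument: the same degenerate ``parallelogram'' placement of $K_4$ whose only bad edge is the diagonal in the non-smooth direction $z$, with Lemma \ref{technicalk4lemma2} supplying $x,y$, Lemma \ref{technicalk4lemma1} giving row independence of the pseudo-rigidity matrix, Lemma \ref{technicalk4lemma3} controlling the limiting support functional of the diagonal, and Proposition \ref{framelimit} plus Corollary \ref{isostaticgraphs} finishing the job. The only (harmless) difference is bookkeeping: the paper builds two approximating families, approaching $z$ from both sides of $\ker(f-g)$ via perturbations $\pm\tfrac{1}{k}x$ (which requires the observation $(f-g)(x)\neq 0$), and then argues that at least one of the two limiting pseudo-rigidity matrices is row-independent, whereas you choose in advance whichever of $f,g$ is linearly independent of $a\varphi(x)-b\varphi(y)$ and steer a single sequence into the corresponding open half-plane, using the density from Lemma \ref{wellpos2} directly.
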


\begin{proof}
We consider $K_4$ to be the complete graph on the vertex set $\{ v_1, v_2, v_3, v_4 \}$. Let $z$ be a non-zero non-smooth point of $X$ with $\|z\|=1$. By Lemma \ref{technicalk4lemma2}, we can choose smooth linearly independent $x,y \in X$ such that $z=x+y$ and $w :=x-y$ is smooth.

Define the placements $p ,q^k$ of $K_4$ for $k \in \mathbb{Z} \setminus \{0\}$ where
\begin{align*}
p_{v_1} = 0, \quad p_{v_2}= x, \quad p_{v_3}=y, \quad  p_{v_4}=x+y =z,
\end{align*}
and:
\begin{align*}
q^k_{v_1} = 0, \quad q^k_{v_2}= x + \frac{1}{k}x, \quad q^k_{v_3}=y, \quad q^k_{v_4}=x+y + \frac{1}{k}x = z + \frac{1}{k}x.
\end{align*}
By Lemma \ref{wellpos2} there exists for each $k \in \mathbb{Z} \setminus \{0\}$ a well-positioned placement $p^k$ such that $\|p^k - q^k \|_{V(K_4)} < \frac{1}{k^2}$ and $p^k_{v_1} =0$.

By part \ref{paper1item2} of Proposition \ref{paper1}, the support functionals $\varphi_{v,w}^k$ for $p^k$ satisfy the following:
\begin{enumerate}[(i)]
\item \begin{align*}
\lim_{k \rightarrow \infty} \varphi^k_{v_2,v_1} = \lim_{k \rightarrow -\infty} \varphi^k_{v_2,v_1} = \lim_{k \rightarrow \infty} \varphi^k_{v_4,v_3} = \lim_{k \rightarrow -\infty} \varphi^k_{v_4,v_3} = \frac{1}{\|x\|}\varphi(x),
\end{align*}

\item \begin{align*}
\lim_{k \rightarrow \infty} \varphi^k_{v_3,v_1} = \lim_{k \rightarrow -\infty} \varphi^k_{v_3,v_1} = \lim_{k \rightarrow \infty} \varphi^k_{v_4,v_2} = \lim_{k \rightarrow -\infty} \varphi^k_{v_4,v_2} = \frac{1}{\|y\|}\varphi(y),
\end{align*}

\item \begin{align*}
\lim_{k \rightarrow \infty} \varphi^k_{v_2,v_3} = \lim_{k \rightarrow -\infty} \varphi^k_{v_2,v_3} = \frac{1}{\|w\|}\varphi(w).
\end{align*}
\end{enumerate}

By part \ref{supportsetitem2} of Proposition \ref{supportset}, $\varphi[z] = [f,g]$ for some $f \neq g$. We now further define $X^+ := (f-g)^{-1}(0, \infty)$, $X^-:= (f-g)^{-1}(-\infty,0)$. We note that $(f-g)x \neq 0$ (as otherwise $x,z$ are linearly independent), thus without loss of generality we may assume $x \in X^+$. For each $k \in \mathbb{Z} \setminus \{0\}$ define $d_k := p^k_{v_4} -q^k_{v_4}$, then $\|d_k\|< \frac{1}{k^2}$. As
\begin{align*}
(f-g)\left( p^k_{v_4} - p^k_{v_1} \right) = (f-g)\left( z + \frac{1}{k} x + d_k \right) = \frac{1}{k}(f-g)(x) + (f-g)(w_k)
\end{align*}
and $\|f-g\| \leq 2$ it follows that
\begin{align*}
\frac{1}{k}(f-g)(x) - \frac{2}{k^2} \leq (f-g)\left( p^k_{v_4} - p^k_{v_1} \right) \leq \frac{1}{k}(f-g)(x) + \frac{2}{k^2},
\end{align*}
thus there exists $N \in \mathbb{N}$ such that if $k \geq N$ then $p^k_{v_4} - p^k_{v_1} \in X^+$ and if $k \leq -N$ then $p^k_{v_4} - p^k_{v_1} \in X^-$. By part \ref{tl1} of Lemma \ref{technicalk4lemma3} it follows that there exists a strictly increasing sequence $(n_i)_{i \in \mathbb{N}}$ in $\mathbb{N}$ such that 
\begin{align*}
\lim_{i \rightarrow \infty} \varphi^{n_i}_{v_4,v_1} = f \qquad \lim_{i \rightarrow \infty} \varphi^{-n_i}_{v_4,v_1} = g.
\end{align*}

Define $\phi_f$ to be the support functionals of $(K_4,p)$ with pseudo-support functional $\varphi_{v_4,v_1} = f$ and likewise define $\phi_g$ to be the support functionals of $(K_4,p)$ with pseudo-support functional $\varphi_{v_4,v_1} = g$. We note that $R(K_4,p^{n_i}) \rightarrow R(K_4,p)^{\phi_f}$ and $R(K_4,p^{-n_i}) \rightarrow R(K_4,p)^{\phi_g}$ as $i \rightarrow \infty$. 

There exists unique $a,b \in \mathbb{R}$ such that $\varphi(w)= a \varphi(x) +b\varphi(y)$. By Lemma \ref{technicalk4lemma1}, $R(K_4,p)^{\phi_f}$ has row independence if $f$ is linearly independent of $a \varphi(x) -b\varphi(y)$ and $R(K_4,p)^{\phi_g}$ has row independence if $g$ is linearly independent of $a \varphi(x) -b\varphi(y)$. Both $f,g$ cannot be linearly dependent to $a \varphi(x) -b\varphi(y)$ as $f,g$ are linearly independent, thus either $R(K_4,p)^{\phi_f}$ or $R(K_4,p)^{\phi_g}$ has row independence. By Lemma \ref{framelimit} this implies that for large enough $i$ we have either $(K_4,p^{n_i})$ or $(K_4,p^{-n_i})$ are independent and thus there exists an independent placement of $K_4$. It now follows by Proposition \ref{isostatic} that $K_4$ is rigid also.
\end{proof}

\begin{figure}
\[
\begin{tikzpicture}
\draw[thick] (0,0) -- (1,1) -- (0,2) -- (-1,1) -- (0,0);
\draw[thick] (1,1) -- (-1,1);
\draw[dashed, red] (0,0) -- (0,2);

\draw[fill] (0,0) circle [radius=0.05];
\draw[fill] (1,1) circle [radius=0.05];
\draw[fill] (-1,1) circle [radius=0.05];
\draw[fill] (0,2) circle [radius=0.05];

\node at (0,-0.3) {$v_1$};
\node at (-1.3,1) {$v_2$};
\node at (1.3,1) {$v_3$};
\node at (0,2.3) {$v_4$};

\draw[thick] (4,0) -- (5.2,1.2) -- (4.2,2.2) -- (3,1) -- (4,0);
\draw[thick] (5.2,1.2) -- (3,1);
\draw[thick, blue] (4,0) -- (4.2,2.2);

\draw[fill] (4,0) circle [radius=0.05];
\draw[fill] (5.2,1.2) circle [radius=0.05];
\draw[fill] (3,1) circle [radius=0.05];
\draw[fill] (4.2,2.2) circle [radius=0.05];

\draw[->, red, thick] (5.2,1.2) -- (5,1);
\draw[->, red, thick] (4.2,2.2) -- (4,2);

\node at (4,-0.3) {$v_1$};
\node at (2.7,1) {$v_2$};
\node at (5.5,1.2) {$v_3$};
\node at (4.2,2.5) {$v_4$};

\draw[thick] (-4,0) -- (-3.2,0.8) -- (-4.2,1.8) -- (-5,1) -- (-4,0);
\draw[thick] (-3.2,0.8) -- (-5,1);
\draw[thick, green] (-4,0) -- (-4.2,1.8);

\draw[fill] (-4,0) circle [radius=0.05];
\draw[fill] (-3.2,0.8) circle [radius=0.05];
\draw[fill] (-5,1) circle [radius=0.05];
\draw[fill] (-4.2,1.8) circle [radius=0.05];

\draw[->, red, thick] (-3.2,0.8) -- (-3,1);
\draw[->, red, thick] (-4.2,1.8) -- (-4,2);

\node at (-4,-0.3) {$v_1$};
\node at (-5.3,1) {$v_2$};
\node at (-2.9,0.8) {$v_3$};
\node at (-4.2,2.1) {$v_4$};

\end{tikzpicture}
\]
\caption{From left to right: $(K_4,p^{-n_i})$, $(K_4,p)$ and $(K_4,p^{n_i})$ for $i \in \mathbb{N}$. The red dashed edge indicates the edge $v_1 v_4$ of $(K_4,p)$ is not well-positioned. We note that the support functional of the green edge will approximate $g$ while the support functional of the blue edge will approximate $f$.}
\end{figure}
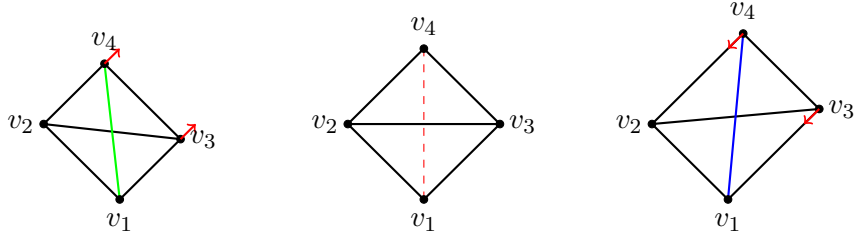

\subsection{The rigidity of \texorpdfstring{$K_4$}{K4} in strictly convex and smooth normed planes}\label{The rigidity of K4 in strictly convex and smooth normed planes}

For this section we shall define $\{v_1, v_2,v_3,v_4\}$ to be the vertex set of $K_4$ and $e := v_1 v_4$. Given a normed plane $X$ we shall fix a basis $b_1,b_2 \in S_1[0]$. 

\begin{definition}
Let $(G,p)$ be a framework in $X$. We say $(G,p)$ is in \textit{3-cycle general position} if every subframework $(H,q) \subset (G,p)$ with $H \cong K_3$ is in general position.
\end{definition}

\begin{lemma}\label{a1}
Let $(K_4-e,p)$ be in 3-cycle general position in a strictly convex normed plane $X$. Then the following holds:
\begin{enumerate}[(i)]
\item \label{a1item1} For all $q \in f^{-1}_{K_4-e}[f_{K_4}(p)]$, $(K_4-e,q)$ is in 3-cycle general position.

\item \label{a1item2} If $(K_4-e,p)$ is well-positioned, then $(K_4-e,p)$ is independent.
\end{enumerate}
\end{lemma}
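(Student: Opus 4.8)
The plan is to handle the two parts with different tools: part (\ref{a1item1}) rests on the equality case of the triangle inequality in strictly convex planes, while part (\ref{a1item2}) is a standard stress-peeling argument fed by Proposition \ref{str}.

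For part (\ref{a1item1}), I would first record that $K_4-e$ has exactly two $3$-cycles, namely $v_1v_2v_3$ and $v_2v_3v_4$, and that all three edges of each lie in $K_4-e$ (the only missing edge being $e=v_1v_4$). Consequently any $q$ in $f_{K_4-e}^{-1}[f_{K_4}(p)]$ — i.e.\ any placement with the same edge lengths as $p$ on $E(K_4-e)$ — preserves all three pairwise distances within each of the two triangles, and since those lengths are positive the adjacent vertices stay distinct. The geometric heart is the claim that in a strictly convex plane three distinct points $a,b,c$ are collinear if and only if one of the three triangle inequalities between their pairwise distances is an equality. The forward direction holds in any normed space (betweenness forces equality); for the converse I would set $u=a-b$, $v=b-c$ and use that $\|u+v\|=\|u\|+\|v\|$ with $u,v\neq 0$ forces $u/\|u\|=v/\|v\|$ by strict convexity, since otherwise $\tfrac{u+v}{\|u\|+\|v\|}$ is a proper convex combination of distinct unit vectors and hence of norm $<1$. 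Applying this: because $(K_4-e,p)$ is in $3$-cycle general position, each triangle's vertices are affinely independent, so all its triangle inequalities are strict; as $q$ preserves these distances the same strict inequalities hold at $q$, so each $q$-triangle is non-collinear and (being on distinct points) affinely independent, giving $3$-cycle general position of $(K_4-e,q)$.

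For part (\ref{a1item2}), I would show the only stress of the well-positioned framework is zero and invoke Proposition \ref{equivind}(\ref{indyitem4}). The key structural point is that in $K_4-e$ the vertices $v_1$ and $v_4$ have degree two, their incident edges lying inside the triangles $v_1v_2v_3$ and $v_2v_3v_4$ respectively. Given a stress $(a_{vw})$, the stress condition at $v_1$ reads $a_{v_1v_2}\varphi_{v_1,v_2}+a_{v_1v_3}\varphi_{v_1,v_3}=0$. Since $p_{v_1}-p_{v_2}$ and $p_{v_1}-p_{v_3}$ are linearly independent (triangle general position), Proposition \ref{str}(\ref{stritem2}) together with the homogeneity of $\varphi$ makes $\varphi_{v_1,v_2},\varphi_{v_1,v_3}$ linearly independent, forcing $a_{v_1v_2}=a_{v_1v_3}=0$. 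The identical argument at $v_4$ yields $a_{v_2v_4}=a_{v_3v_4}=0$. Finally the stress condition at $v_2$ collapses to $a_{v_2v_3}\varphi_{v_2,v_3}=0$, and since $\varphi_{v_2,v_3}\neq 0$ we obtain $a_{v_2v_3}=0$; thus the stress is trivial and $(K_4-e,p)$ is independent.

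I expect the main obstacle to be part (\ref{a1item1}): pinning down the characterisation of collinearity through the equality case of the triangle inequality and confirming that distinctness (hence the right notion of general position) is retained, which holds precisely because every edge of each triangle lies in $K_4-e$ so all the relevant positive lengths are preserved. Part (\ref{a1item2}) is then routine degree-two peeling once strict convexity supplies independence of the support functionals.
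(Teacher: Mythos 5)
Your proposal is correct and takes essentially the same approach as the paper: the heart of part (\ref{a1item1}) in both arguments is that equality in the triangle inequality for non-parallel vectors contradicts strict convexity, with preservation of edge lengths transferring the relevant condition between $p$ and $q$ (you package this as a collinearity characterisation applied contrapositively, while the paper assumes collinearity at $q$ and pulls it back to $p$ for a direct contradiction). Part (\ref{a1item2}) coincides with the paper's proof: stress-peeling at the degree-two vertices $v_1$ and $v_4$ via Proposition \ref{str}(\ref{stritem2}), followed by the stress condition at $v_2$.
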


\begin{proof}
(\ref{a1item1}): Suppose $(K_4-e,q)$ is not in 3-cycle general position, then without loss of generality we may assume $q_{v_1},q_{v_2},q_{v_3}$ lie on a line. By possibly reordering vertices we note that we have 
\begin{align*}
\|q_{v_1} - q_{v_2} \| + \|q_{v_2}-q_{v_3} \| = \| q_{v_1} - q_{v_3}\|.
\end{align*} 
Define $a_{12}= \|p_{v_1} - p_{v_2}\|$, $a_{23} = \|p_{v_2}-p_{v_3}\|$, $x_{12}= (p_{v_1} - p_{v_2})/a_{12}$ and $x_{23}= (p_{v_2} - p_{v_3})/a_{23}$. As $p$ is in general position we note $a_{12},a_{23} > 0$ and $x_{12},x_{23}$ are linearly independent. As $q \in f^{-1}_{K_4}[f_{K_4}(p)]$ then we have that
\begin{align*}
\| a_{12}x_{12} + a_{23}x_{23}\| = \| a_{12}x_{12}\| + \| a_{23}x_{23}\|.
\end{align*} 
We note that $a_{23}/(a_{12} + a_{23}) = 1 - a_{12}/(a_{12} + a_{23})$, thus if we let $t := a_{12}/(a_{12} + a_{23})$ then $t \in (0,1)$ and 
\begin{align*}
\|tx_{12} + (1-t)x_{23}\|= \frac{\| a_{12}x_{12} + a_{23}x_{23}\|}{(a_{12} + a_{23})} = \frac{\| a_{12}x_{12}\| + \| a_{23}x_{23}\|}{(a_{12} + a_{23})} = \| t x_{12}\| + \| (1-t)x_{23}\| =1
\end{align*} 
which contradicts the strict convexity of $X$.

(\ref{a1item2}): Suppose $a \in \mathbb{R}^{E(K_4)\setminus \{e\}}$ is a stress of $(K_4 -e, p)$. By observing the stress condition at $v_1$ we note
\begin{align*}
a_{v_1 v_2} \varphi_{v_1, v_2} + a_{v_1 v_3} \varphi_{v_1,v_3} = 0.
\end{align*}
As $(K_4-e,p)$ is in 3-cycle general position then by part \ref{stritem2} of Proposition \ref{str} it follows $a_{v_1 v_2}= a_{v_1 v_3} =0$. By the same method if we observe the stress condition at $v_4$ we see that $a_{v_2 v_4}= a_{v_3 v_4} =0$. We now see that the stress condition at $v_2$ is
\begin{align*}
a_{v_1 v_2} \varphi_{v_2, v_1} + a_{v_2 v_3} \varphi_{v_2,v_3} + a_{v_2 v_4} \varphi_{v_2,v_4} = a_{v_2 v_3} \varphi_{v_2,v_3} = 0,
\end{align*}
thus $a=0$ and $(K_4-e,p)$ is independent.
\end{proof}

Define for any graph $G$ and vertex $v \in V(G)$ the map
\begin{align*}
f_{G,v} : X^{V(G)} \rightarrow \mathbb{R}^{E(G)} \times X, ~ p \mapsto (f_G(p), p_v);
\end{align*}
it is immediate that $f_{G,v}$ is differentiable at $p$ if and only if $p$ is well-positioned. We note that the kernel of $df_{G,v}(p)$ is exactly the space of infinitesimal flexes $u$ of $(G,p)$ where $u_v =0$. 

\begin{lemma}\label{a2}
Let $X$ be a strictly convex and smooth normed plane and suppose $(K_4-e,p)$ is in 3-cycle general position with $p_{v_1}=0$, then $V(p) := f^{-1}_{K_4-e,v_1}[f_{K_4-e,v_1}(p)]$ is a 1-dimensional compact Hausdorff $C^1$-manifold.
\end{lemma}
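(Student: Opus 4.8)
The plan is to show that $f_{K_4-e,v_1}(p)$ is a regular value of $f_{K_4-e,v_1}$ and then invoke the regular value theorem (the $C^1$ submersion theorem). Since $\dim X^{V(K_4-e)} = 8$ while $\dim(\mathbb{R}^{E(K_4-e)} \times X) = 5+2 = 7$, a surjective derivative at every point of the fibre $V(p)$ will force $V(p)$ to be a $C^1$-submanifold of dimension $8-7 = 1$; compactness and the Hausdorff property can then be verified separately by elementary estimates.

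Before differentiating I would first check that $f_{K_4-e,v_1}$ is genuinely $C^1$ near $V(p)$, and this is exactly where smoothness of $X$ enters. For every $q \in V(p)$ and every edge $vw$ we have $\|q_v - q_w\| = \|p_v - p_w\|$, and the right-hand side is strictly positive because $(K_4-e,p)$ is in 3-cycle general position (each edge lies in a non-degenerate triangle). Hence $q_v - q_w \neq 0$, and since $X$ is smooth every non-zero vector lies in $\smooth$, so $q$ is well-positioned. Thus $V(p) \subseteq \mathcal{W}(K_4-e)$, an open set on which $f_{K_4-e,v_1}$ is $C^1$ by Lemma \ref{rigopcont}. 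Moreover each such $q$ lies in $f^{-1}_{K_4-e}[f_{K_4-e}(p)]$, so part \ref{a1item1} of Lemma \ref{a1} gives that $(K_4-e,q)$ is in 3-cycle general position, and then part \ref{a1item2} of Lemma \ref{a1} gives that $(K_4-e,q)$ is independent.

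The crux is to verify that $df_{K_4-e,v_1}(q) : u \mapsto (df_{K_4-e}(q)u,\, u_{v_1})$ is surjective for every $q \in V(p)$. By independence, $\rank df_{K_4-e}(q) = |E(K_4-e)| = 5$, so $\dim \mathcal{F}(K_4-e,q) = 8 - 5 = 3$. As recorded before the statement, $\ker df_{K_4-e,v_1}(q)$ is exactly $\{u \in \mathcal{F}(K_4-e,q) : u_{v_1} = 0\}$. Now the linear map $\mathcal{F}(K_4-e,q) \to X,\ u \mapsto u_{v_1}$ is surjective: for any $c \in X$ the constant assignment $u_v = c$ is a translational (hence trivial) flex with $u_{v_1} = c$. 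Its kernel therefore has dimension $3 - \dim X = 1$, so $\dim \ker df_{K_4-e,v_1}(q) = 1$ and $df_{K_4-e,v_1}(q)$ has rank $8-1 = 7$, i.e.\ is surjective. This argument is uniform and needs no separate treatment of the Euclidean and non-Euclidean cases. Applying the regular value theorem to the $C^1$ map $f_{K_4-e,v_1}$ on the open set $\mathcal{W}(K_4-e)$ then yields that $V(p)$ is a $C^1$-manifold of dimension $1$.

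Finally, $V(p)$ is Hausdorff as a subspace of the finite-dimensional normed space $X^{V(K_4-e)}$, and it is closed as the preimage of a point under the continuous map $f_{K_4-e,v_1}$. For boundedness, $q_{v_1}=0$ together with the edge constraints gives $\|q_{v_2}\| = \|p_{v_1}-p_{v_2}\|$, $\|q_{v_3}\| = \|p_{v_1}-p_{v_3}\|$ and $\|q_{v_4}\| \leq \|q_{v_4}-q_{v_2}\| + \|q_{v_2}\|$, all of which are fixed finite quantities; hence $V(p)$ is bounded and therefore compact. I expect the surjectivity computation in the third paragraph to be the main obstacle, since it is precisely where independence (Lemma \ref{a1}) must be combined with the structure of the trivial motions; the enabling observation that makes it available is that smoothness of $X$ keeps the whole fibre inside the well-positioned locus on which $f_{K_4-e,v_1}$ is differentiable.
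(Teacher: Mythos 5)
Your proof is correct and follows essentially the same route as the paper: use Lemma \ref{a1} to get independence of $(K_4-e,q)$ at every point $q$ of the fibre, conclude that $df_{K_4-e,v_1}(q)$ is surjective, apply the $C^1$ regular value theorem, and handle compactness and the Hausdorff property by elementary arguments. The only difference is that you spell out two steps the paper leaves implicit --- that smoothness of $X$ keeps the whole fibre inside $\mathcal{W}(K_4-e)$, and the rank count showing independence plus translational flexes give surjectivity onto $\mathbb{R}^{E(K_4-e)} \times X$ --- both of which are filled in correctly.
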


\begin{proof}
As $K_4-e$ is connected it that follows $V(p)$ is bounded. As $f_{K_4-e, v_1}$ is continuous then $V(p)$ is closed, thus $V(p)$ is compact; further as $X^{V(K_4)}$ is Hausdorff so too is $V(p)$.

Choose any $q \in V(p)$, then by part \ref{a1item1} of Lemma \ref{a1}, $(K_4-e,q)$ is in 3-cycle general position. By part \ref{a1item2} of Lemma \ref{a1}, $(K_4-e,q)$ is independent, thus for all $q \in V(p)$ we have that $df_{K_4,v_1}(q)$ is surjective i.e.~$p$ is a regular point of $f_{K_4 - e,v_1}$. It now follows from \cite[Theorem 3.5.2(ii)]{manifold} that $V(p)$ is a $C^1$-manifold with dimension $\dim \ker df_{K_4 -e,v_1}(p) =1$.
\end{proof}

We denote by $\mathbb{T}$ the circle group i.e.~the set $\{ e^{i \phi} : \phi \in (-\pi, \pi] \}$ with topology and group operation inherited from $\mathbb{C} \setminus \{0\}$. We note there exists a surjective continuous map $\theta : X \setminus \{0\} \rightarrow \mathbb{T}$ given by
\begin{align*}
x = \lambda b_1 +\mu b_2 \mapsto \frac{\lambda + \mu i}{\sqrt{\lambda^2 + \mu^2}};
\end{align*}
so long as the basis $b_1,b_2 \in X$ is fixed then $\theta$ will be well-defined. We note that if we restrict $\theta$ to $S_1[0]$ then it is a homeomorphism. Let $x, y \in X \setminus \{0\}$ be linearly independent, then $\theta(x)\theta(y)^{-1} = e^{i \phi} \neq \pm 1$; if $\phi \in (0, \pi)$ then we say $x \theta y$ while if $\phi \in (-\pi, 0)$ then we say $y \theta x$.

Choose any two linearly independent points $x,y$ in a normed plane $X$ and define $L(x,y)$ to be the unique line through $x$ and $y$. By abuse of notation we also denote by $L(x,y)$ the unique linear functional $L(x,y):X \rightarrow \mathbb{R}$ where $L(x,y)x= L(x,y)y=1$. We say that $z,z' \in X$ are on opposite sides of the line $L(x,y)$ if and only if $L(x,y)z < 1 < L(x,y)z'$ or vice versa.

\begin{lemma}\label{a3}
Let $X$, $p$ and $V(p)$ be as defined in Lemma \ref{a2}. Define the maps $f,g : V(p) \rightarrow \{-1,1\}$ where 
\begin{align*}
f(q) = 
\begin{cases}
1, & \text{if } q_{v_2} \theta q_{v_3}\\
-1, & \text{if } q_{v_3} \theta q_{v_2}
\end{cases}
\end{align*}
and 
\begin{align*}
g(q) = 
\begin{cases}
1, & \text{if } L(q_{v_2},q_{v_3})(q_{v_4}) >1\\
-1, & \text{if } L(q_{v_2},q_{v_3})(q_{v_4}) <1,
\end{cases}
\end{align*}
then $f,g$ are well-defined and continuous. 
\end{lemma}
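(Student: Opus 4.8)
The plan is to reduce both claims to one principle: a map into the two-point discrete set $\{-1,1\}$ is continuous exactly when it is locally constant, and for a continuous real-valued function that never vanishes on $V(p)$ the composite $\sgn$ of it is locally constant. So for each of $f$ and $g$ I will produce a continuous real function on $V(p)$ whose sign recovers the map, where the fact that this function never vanishes is precisely the well-definedness.

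First I would assemble the general-position data valid at every $q \in V(p)$. By definition of $f_{K_4-e,v_1}$ we have $q_{v_1} = p_{v_1} = 0$, and by part \ref{a1item1} of Lemma \ref{a1} the framework $(K_4-e,q)$ is in 3-cycle general position. Applying this to the triangle $v_1 v_2 v_3$ and using $q_{v_1}=0$ gives that $q_{v_2}, q_{v_3}$ are linearly independent (in particular nonzero), while applying it to the triangle $v_2 v_3 v_4$ gives that $q_{v_4}$ does not lie on the line through $q_{v_2}$ and $q_{v_3}$. The first fact forces $\theta(q_{v_2})\theta(q_{v_3})^{-1} = e^{i\phi}$ with $\phi \in (-\pi,0) \cup (0,\pi)$, so exactly one of $q_{v_2}\,\theta\,q_{v_3}$ and $q_{v_3}\,\theta\,q_{v_2}$ holds and $f$ is well-defined; the two facts together make $L(q_{v_2},q_{v_3})$ well-defined with $L(q_{v_2},q_{v_3})(q_{v_4}) \neq 1$, so $g$ is well-defined.

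For $f$ I observe that $f(q)=1$ iff $\phi \in (0,\pi)$ iff $\operatorname{Im}\big(\theta(q_{v_2})\theta(q_{v_3})^{-1}\big) > 0$; thus $f = \sgn \circ h_f$ where $h_f(q) := \operatorname{Im}\big(\theta(q_{v_2})\theta(q_{v_3})^{-1}\big)$. Since $\theta$ is continuous and $q_{v_2},q_{v_3}$ are nonzero, $h_f$ is continuous, and by the paragraph above it never vanishes on $V(p)$; hence $f$ is locally constant and therefore continuous. For $g$ I would make $L(q_{v_2},q_{v_3})$ explicit in the fixed basis $b_1,b_2$: writing $q_{v_i} = \lambda_i b_1 + \mu_i b_2$, the coordinates of $L(q_{v_2},q_{v_3})$ in the dual basis are the solution of the linear system with rows $(\lambda_2,\mu_2)$ and $(\lambda_3,\mu_3)$ and right-hand side $(1,1)$. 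Linear independence of $q_{v_2},q_{v_3}$ makes this system's matrix invertible, so by Cramer's rule its solution, and hence $q \mapsto L(q_{v_2},q_{v_3})(q_{v_4})$, varies continuously over $V(p)$. Therefore $g = \sgn \circ h_g$ with $h_g(q) := L(q_{v_2},q_{v_3})(q_{v_4}) - 1$ continuous and, by the previous paragraph, nowhere zero, so $g$ is continuous.

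The two general-position observations and the sign principle are routine. The step demanding the most care is the continuity of $g$: because $L(q_{v_2},q_{v_3})$ is specified only implicitly through the interpolation conditions $L(q_{v_2},q_{v_3})(q_{v_2}) = L(q_{v_2},q_{v_3})(q_{v_3}) = 1$, one must check that it depends continuously on $q$, and this is exactly what the Cramer's-rule computation delivers, the required invertibility being the linear independence furnished by 3-cycle general position.
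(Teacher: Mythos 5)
Your proof is correct and follows essentially the same route as the paper: well-definedness of $f$ and $g$ comes from 3-cycle general position via part (i) of Lemma \ref{a1} (linear independence of $q_{v_2},q_{v_3}$ and $q_{v_4}$ off the line $L(q_{v_2},q_{v_3})$), and continuity comes from the maps being locally constant. The paper asserts local constancy in one line, whereas you justify it by exhibiting $f$ and $g$ as signs of continuous nowhere-vanishing functions (with the Cramer's-rule check for $q \mapsto L(q_{v_2},q_{v_3})(q_{v_4})$); this is a welcome filling-in of detail, not a different argument.
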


\begin{proof}
We note that $f$ is not well-defined at $q$ if and only if $q_{v_2}, q_{v_3}$ are linearly dependent. By part \ref{a1item1} of Lemma \ref{a1}, as $(K_4-e,q)$ is in 3-cycle general position and $q_{v_1} =0$ then $q_{v_2}, q_{v_3}$ are linearly independent, thus $f$ is well-defined at all $q \in V(p)$. 

The map $g$ is not well-defined at $q$ if either $q_{v_2},q_{v_3}$ are linearly dependent or $q_{v_4}$ lies on $L(q_{v_2},q_{v_3})$. By part \ref{a1item1} of Lemma \ref{a1}, as either would imply $(K_4-e,q)$ is not in 3-cycle general position we have that $g$ is well-defined. 

As $f$ and $g$ are locally constant they are continuous.
\end{proof}

\begin{lemma}\cite[Proposition 31]{minkowskigeom}\label{monlemma}
Let $X$ be a strictly convex normed plane and $a,b,c \in X \setminus \{0\}$ be distinct with $\|b\|=\|c\|$. If $a \theta b$, $b \theta c$ and $a \theta c$, or $c \theta b$, $b \theta a$ and $c \theta a$, then $\|a-b\| < \|a-c\|$.
\end{lemma}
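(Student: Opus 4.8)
The plan is to reduce the statement to a question about two homothetic strictly convex curves and exploit that such curves meet in at most two points. First I would use homogeneity of the norm to rescale so that $\|b\|=\|c\|=1$, so that $b,c$ both lie on $S_1[0]$. Next I would unpack the $\theta$-hypotheses. Writing $\gamma,\beta,\alpha$ for the arguments of $c,b,a$, the three relations $a\,\theta\,b$, $b\,\theta\,c$, $a\,\theta\,c$ say precisely that $\gamma<\beta<\alpha$ with $\alpha-\gamma<\pi$; the second (mirror) case gives the same picture with the reversed orientation, and since only the distance to $a$ is at stake the argument is orientation-free, so both cases may be treated uniformly. From $\gamma<\beta<\alpha$ and spread less than $\pi$ it follows that $a$ and $c$ lie strictly on opposite sides of the line $\mathbb{R}b$, equivalently $b$ lies in the open cone $\cone^+(a,c)$, so $b=sc+ta$ with $s,t>0$.

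Set $\rho:=\|a-b\|$; the goal $\|a-c\|>\rho$ is exactly the assertion that $c$ lies strictly outside the metric disk $\bar B_\rho[a]$. This is where strict convexity enters. The circle $S_\rho[a]=a+\rho\,S_1[0]$ is a positive homothet of $S_1[0]$ with a different centre (as $a\neq 0$), so the two strictly convex curves $S_1[0]$ and $S_\rho[a]$ meet in at most two points, one of which is $b$. Hence $A:=S_1[0]\cap \bar B_\rho[a]$ is a single closed arc whose endpoints lie on $S_\rho[a]$, and $b$ is one of those endpoints. It therefore suffices to show that $c$ does not lie on $A$, and in fact lies strictly beyond the $b$-endpoint on the side away from $a$. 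Strict convexity is also what upgrades the conclusion to a strict inequality: it rules out the degenerate possibility $\|a-c\|=\rho$, which (as one sees already in the Euclidean model) would force the argument of $a$ to be the $\theta$-midpoint of $\beta$ and $\gamma$ and hence contradict $\beta<\alpha$.

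The heart of the matter, and the step I expect to be the main obstacle, is the monotonicity statement hidden in locating the near-arc $A$: as a point travels along $S_1[0]$ through $b$ away from the direction of $a$, its distance to $a$ strictly increases, so that $A$ is entered from $b$ on the $a$-side (the arc containing the argument $\alpha$) and $c$, which sits on the opposite angular side of $b$, lies outside $\bar B_\rho[a]$. I would prove this globally rather than by a local estimate, because the tempting shortcuts are genuinely false: the point where the segment $[a,c]$ crosses the ray $\mathbb{R}_{>0}b$ need not be farther from $a$ than $b$ is, and the support functional of $\bar B_\rho[a]$ at $b$ can evaluate negatively on $c-b$, so neither yields the inequality directly (both phenomena already occur for small $\|a\|$ in the Euclidean plane). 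Instead, assuming for contradiction $\|a-c\|\le\rho$, I would combine three facts — convexity of $\bar B_\rho[a]$ (forcing $[a,c]$ to meet $\mathbb{R}_{>0}b$ inside $\bar B_\rho[a]$ and the chord $[b,c]$ to lie in $\bar B_\rho[a]$), the cone relation $b=sc+ta$ with $s,t>0$, and the strict convexity of $S_1[0]$ along $[b,c]$ (so that the relative interior of the chord is interior to the unit ball) — against the bound that $S_1[0]$ and $S_\rho[a]$ share at most two points, so as to produce either a third common point of the two circles or a point of $S_1[0]$ strictly inside $\bar B_\rho[a]$ lying on the wrong side of the endpoint $b$. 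Making this contradiction airtight, i.e.\ controlling exactly on which side of $b$ the disk $\bar B_\rho[a]$ is entered, is the delicate part and is precisely where the interplay of the cone condition with strict convexity must be used carefully.
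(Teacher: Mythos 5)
Your proposal is not a proof yet, and the two places where it is open are exactly where the content of the lemma lives. (Note there is no in-paper proof to compare against: the paper quotes this lemma from \cite[Proposition 31]{minkowskigeom}, so your argument has to stand alone.) The first gap is a circularity: your central tool --- that the homothetic strictly convex curves $S_1[0]$ and $S_\rho[a]$ meet in at most two points --- is asserted without proof, and within this paper that fact is Lemma \ref{a5}, which is \emph{proved from} Lemma \ref{monlemma}; the usual route to it in the literature is likewise via the monotonicity lemma, since it amounts to unimodality of the distance to $a$ along the circle. So you would need a genuinely independent proof of the two-point property, which you do not supply. The second gap you concede yourself: even granting that tool, the claim that $A := S_1[0] \cap B_\rho[a]$ is a \emph{proper} closed arc having $b$ as an \emph{endpoint}, entered from the $a$-side, is precisely the assertion that the distance to $a$ has no local maximum at $b$ along $S_1[0]$ and increases towards $c$; nothing you wrote rules out that $A$ is all of $S_1[0]$ (i.e.\ that $b$ is a farthest point of the circle from $a$ --- one of the ways the conclusion could fail) or that $b$ lies in the relative interior of $A$. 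Your final paragraph only sketches a contradiction strategy for this and admits it is not airtight, so what you have is a plan, not a proof.

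The frustrating part is that your first step (which is correct) already yields a complete elementary proof needing no circle-intersection facts. From $b = ta + sc$ with $s,t>0$, the point $m := \tfrac{1}{t+s}b = \tfrac{t}{t+s}a + \tfrac{s}{t+s}c$ lies in the open segment $(a,c)$ and equals $\mu b$ with $\mu := \tfrac{1}{t+s} > 0$. Then $\|m-b\| = |\mu-1|\,\|b\|$, while the reverse triangle inequality gives $\|m-c\| \geq \bigl|\,\|m\|-\|c\|\,\bigr| = |\mu-1|\,\|b\|$ because $\|c\|=\|b\|$. If $\mu=1$ then $\|m-c\| = \|b-c\| > 0 = \|m-b\|$. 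If $\mu \neq 1$, equality here would be equality in the triangle inequality for two non-zero vectors, which in a strictly convex space forces $m$ and $c$, hence $b$ and $c$, to be positively proportional --- impossible for distinct points with $\|b\|=\|c\|$. Either way $\|m-c\| > \|m-b\|$, and therefore $\|a-b\| \leq \|a-m\| + \|m-b\| < \|a-m\| + \|m-c\| = \|a-c\|$, the last equality holding because $m \in (a,c)$. Both orientation cases of the hypothesis give the same cone condition, so this settles the lemma; it is essentially the standard proof of the cited proposition, and it is what your middle two paragraphs should be replaced by.
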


\begin{lemma}\label{a5}
Let $X$ be a strictly convex normed plane, $x,y \in X$ be distinct and $r_x, r_y >0$. If $S_{r_x}[x] \cap S_{r_y}[y] \neq \emptyset$ then one of the following holds:
\begin{enumerate}[(i)]
\item \label{a5item1} $S_{r_x}[x] \cap S_{r_y}[y] = \{z\}$ and $x,y,z$ are colinear.

\item \label{a5item2} $S_{r_x}[x] \cap S_{r_y}[y] = \{z_1, z_2\}$ for $z_1 \neq z_2$. Further, if $x =0$ then $z_1 \theta y$ and $y \theta z$ or vice versa, and if $x,y$ are linearly independent then $z_1, z_2$ are on opposite sides of the line $L(x,y)$.
\end{enumerate}
\end{lemma}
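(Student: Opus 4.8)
The plan is to show that, as $z$ traverses the sphere $S_{r_x}[x]$, the distance $\|z - y\|$ is a strictly unimodal function of the angular position, so that the level equation $\|z-y\| = r_y$ has at most two solutions; the monotonicity input is exactly Lemma \ref{monlemma}, and strict convexity is what upgrades it to \emph{strict} monotonicity. First I would pass to difference vectors. Every $z \in S_{r_x}[x]$ satisfies $\|z - x\| = r_x$, so the vectors $b := z - x$ all lie on the sphere $S_{r_x}[0]$ of constant norm $r_x$; writing $a := y - x \neq 0$ (nonzero since $x \neq y$) one has $\|z - y\| = \|a - b\|$. Thus the intersection $S_{r_x}[x]\cap S_{r_y}[y]$ corresponds bijectively, via $z = x + b$, to the set $\{\, b \in S_{r_x}[0] : \|a - b\| = r_y \,\}$, and it suffices to analyse the function $\rho(b) := \|a - b\|$ on $S_{r_x}[0]$.

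Next I would establish strict unimodality of $\rho$ using the angular parametrisation $\theta$ restricted to $S_{r_x}[0]$. The only points of $S_{r_x}[0]$ linearly dependent with $a$ are $b = r_x a/\|a\|$ and $b = -r_x a/\|a\|$, and a direct computation gives $\rho = \bigl|\,\|a\| - r_x\,\bigr|$ at the first and $\rho = \|a\| + r_x$ at the second. These two points split $S_{r_x}[0]$ into two open arcs, and on each arc Lemma \ref{monlemma} (applied with this $a$ and with $\|b\|=\|c\|=r_x$) shows that moving a point angularly away from the direction of $a$ towards its antipode strictly increases $\rho$. Hence $\rho$ attains its minimum $\bigl|\,\|a\|-r_x\,\bigr|$ only at $r_x a/\|a\|$, its maximum $\|a\|+r_x$ only at $-r_x a/\|a\|$, and is strictly increasing from minimum to maximum along each arc.

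Finally I would read off the dichotomy. Since the intersection is assumed nonempty, $r_y$ lies in $\bigl[\,\bigl|\,\|a\|-r_x\,\bigr|,\ \|a\|+r_x\,\bigr]$. If $r_y$ equals either endpoint, strict monotonicity forces the unique solution $b = \pm r_x a/\|a\|$, so $z - x$ is a scalar multiple of $y - x$ and $x,y,z$ are colinear, which is case \ref{a5item1}. If $r_y$ lies strictly between the endpoints, then continuity together with strict monotonicity gives exactly one solution on each of the two arcs, so $\#(S_{r_x}[x]\cap S_{r_y}[y]) = 2$, as in case \ref{a5item2}; moreover the two solution vectors $b_1,b_2$ lie on opposite angular sides of $a$ and are not parallel to $a$. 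When $x = 0$ we have $b_i = z_i$ and $a = y$, so this is precisely the statement that $z_1 \theta y$ and $y \theta z_2$ up to relabelling. When $x,y$ are linearly independent the line $L(x,y)$ avoids the origin, and $b_1,b_2$ lying on opposite angular sides of $a = y-x$ translates exactly into $z_1 = x+b_1$ and $z_2 = x+b_2$ lying on opposite sides of $L(x,y)$, i.e. $L(x,y)z_1 < 1 < L(x,y)z_2$ or vice versa.

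The main obstacle is the strict unimodality step: one must carefully match the hypotheses of Lemma \ref{monlemma} (the equal norms $\|b\|=\|c\|=r_x$ and the correct $\theta$-orderings relative to $a$) to each of the two arcs, and confirm that the extremal values of $\rho$ are attained \emph{only} at the two colinear points. Once this monotonicity is in place, the counting of intersection points and the side/ordering conclusions follow formally from continuity and the sign of $L(x,y)$.
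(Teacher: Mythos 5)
Your proposal is correct and is essentially the paper's own argument: both proofs reduce the intersection to a level set of the distance function on the circle $S_{r_x}$, invoke Lemma \ref{monlemma} to get strict unimodality along the two arcs separated by the points colinear with the centres, and then count solutions by strict monotonicity and the Intermediate Value Theorem, reading off the $\theta$-ordering and opposite-side conclusions from the angular positions of the two solutions. The only cosmetic difference is that you translate everything to origin-centred spheres first, whereas the paper keeps the centre $x$ and builds the translation into its parametrisation $\phi(t) = r_x\theta^{-1}(e^{i(t+t_0)}) + x$.
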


\begin{proof}
Let $\theta : S_1[0] \rightarrow \mathbb{T}$ be as previously described. Define the continuous map $\phi : [-\pi , \pi] \rightarrow S_{r_x}[x]$, $\phi(t) := r_x \theta^{-1} (e^{i(t +t_0)}) +x$, where $r_x \theta^{-1} (e^{it_0})$ the unique point between $x,y$ on $S_{r_x}[x]$; we note that $\phi(-\pi)= \phi(\pi)$. Now define the map $h: [-\pi , \pi] \rightarrow \mathbb{R}$, $h(t) := \| \phi(t) - y \|$, then $h(-\pi)=h(\pi)$. It follows from Lemma \ref{monlemma} that $h$ is strictly increasing on $[0, \pi]$ and strictly decreasing on $[-\pi, 0]$. 

If $\phi (0) \in S_{r_x}[x] \cap S_{r_y}[y]$ then for all $t \neq 0$,
\begin{align*}
\|\phi(t) - y \| = h(t) > h(0) = r_y,
\end{align*}
thus $S_{r_x}[x] \cap S_{r_y}[y] = \{z\}$ with $z := \phi(0)$; similarly if $\phi (\pi) \in S_{r_x}[x] \cap S_{r_y}[y]$ then $S_{r_x}[x] \cap S_{r_y}[y] = \{z\}$ with $z := \phi(\pi)$ and so \ref{a5item1} holds.

Suppose $\phi(0), \phi (\pi) \notin S_{r_x}[x] \cap S_{r_y}[y]$. We note that as $S_{r_x}[x] \cap S_{r_y}[y] \neq \emptyset$ then there exists $t_1 \in (-\pi,\pi) \setminus \{0\}$ so that $h(t_1) = r_y$. First suppose $t_1 \in (-\pi,0)$, then for all $t \in (t_1,0)$ and $t' \in (-\pi, t_1)$ we have $h(t) < h(t_1) < h(t')$, thus there are no other intersection points in $(-\pi,0)$. As $h|_{[0,\pi]}$ is strictly increasing and 
\begin{align*}
h(0) < h(t_1) = r_y < h(-\pi) = h(\pi)
\end{align*}
then by the Intermediate Value Theorem there exists a unique value $t_2 \in (0,\pi)$ so that $h(t_2) = r_y$, thus $S_{r_x}[x] \cap S_{r_y}[y] = \{\phi(t_1), \phi(t_2)\}$ with $-\pi<t_1<0<t_2<\pi$. Similarly if $t_1 \in (0,\pi)$ then $S_{r_x}[x] \cap S_{r_y}[y] = \{\phi(t_1), \phi(t_2)\}$ with $-\pi<t_2<0<t_1<\pi$. 

If $x =0$ then it is immediate that $\phi(t_1) \theta \phi(0)$ and $\phi(0) \theta \phi(t_2)$. As $\phi(0)$ is a positive scalar multiple of $y$ then $\phi(t_1) \theta y$ and $y \theta \phi(t_2)$. Now suppose $x,y$ are linearly independent, then we now note that $\phi(t_1)$ and $\phi(t_2)$ lie on opposite sides of the line through $x,y$ as $e^{i(t_1 +t_0)}$ and $e^{i(t_2 +t_0)}$ lie on opposite sides of the line through $e^{i t_0}$ and $e^{-it_0}$.
\end{proof}

\begin{lemma}\label{a5.5}
Let $X$, $p$ and $V(p)$ be as defined in Lemma \ref{a2}. Let $q^1,q^2 \in V(p)$ with $f(q^1)=f(q^2)$, $g(q^1)=g(q^2)$ and $q^1_{v_2}= q^2_{v_2}$, then $q^1=q^2$.
\end{lemma}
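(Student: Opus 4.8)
The plan is to reconstruct $q$ coordinate by coordinate in the order $v_1, v_2, v_3, v_4$, showing at each stage that the combinatorial invariants pin down the unique admissible position. Both $q^1$ and $q^2$ lie in $V(p)$, so $q^i_{v_1}=0$ and they satisfy every edge-length constraint of $K_4-e$; by hypothesis they also agree at $v_2$, so write $w_2 := q^1_{v_2} = q^2_{v_2}$. Note $w_2 \neq 0$ since $\|w_2\| = \|p_{v_2}\| > 0$ (as $p$ is in general position with $p_{v_1}=0$), and that by Lemma \ref{a1}\ref{a1item1} each $(K_4-e,q^i)$ is again in 3-cycle general position.

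First I would locate $v_3$. Setting $r_3 := \|p_{v_3}\|$ and $r_{23} := \|p_{v_2}-p_{v_3}\|$, the edges $v_1v_3$ and $v_2v_3$ force each $q^i_{v_3}$ to lie in $S_{r_3}[0] \cap S_{r_{23}}[w_2]$. I apply Lemma \ref{a5} with $x=0$ and $y=w_2$. The single-point alternative \ref{a5item1} would make $0, w_2, q^i_{v_3}$ colinear, contradicting 3-cycle general position; hence the intersection is exactly two points $z_1,z_2$ with, say, $z_1 \theta w_2$ and $w_2 \theta z_2$. Reading off the definition of $f$ from Lemma \ref{a3}, the choice $q_{v_3}=z_1$ gives $f=-1$ while $q_{v_3}=z_2$ gives $f=1$, so the two candidates are separated by $f$. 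Since $f(q^1)=f(q^2)$, I conclude $q^1_{v_3}=q^2_{v_3}=:w_3$.

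Next I locate $v_4$. With $r_{24} := \|p_{v_2}-p_{v_4}\|$ and $r_{34} := \|p_{v_3}-p_{v_4}\|$, the edges $v_2v_4$ and $v_3v_4$ place each $q^i_{v_4}$ in $S_{r_{24}}[w_2] \cap S_{r_{34}}[w_3]$. Because $q^i$ is in 3-cycle general position with $q^i_{v_1}=0$, the centres $w_2,w_3$ are linearly independent, so Lemma \ref{a5}\ref{a5item2} gives two candidate points lying on opposite sides of the line $L(w_2,w_3)$ (the degenerate case \ref{a5item1} is again excluded, as it would put $q^i_{v_4}$ on $L(w_2,w_3)$, violating general position). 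These two sides are precisely the dichotomy $L(w_2,w_3)(\cdot)>1$ versus $<1$ defining $g$, so $g(q^1)=g(q^2)$ forces $q^1_{v_4}=q^2_{v_4}$. Assembling the four coordinates yields $q^1=q^2$.

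The only delicate point is the bookkeeping that $f$ and $g$ genuinely \emph{separate} the two solutions produced by Lemma \ref{a5}, rather than merely being well-defined. For $f$ this is the translation of the angular ordering $z_1 \theta w_2 \theta z_2$ into opposite signs of $f$; for $g$ it is the identification of ``opposite sides of $L(w_2,w_3)$'' with the sign condition in its definition. The general-position hypothesis, propagated to all of $V(p)$ by Lemma \ref{a1}\ref{a1item1}, is exactly what both rules out the single-point intersections and supplies the linear independence needed to invoke part \ref{a5item2} of Lemma \ref{a5}; I expect verifying these sign correspondences to require the most care.
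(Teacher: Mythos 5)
Your proof is correct and follows essentially the same route as the paper's: both apply Lemma \ref{a5} twice --- first to the spheres centred at $0$ and $q_{v_2}$ to pin down $q_{v_3}$ via the sign of $f$, then to the spheres centred at $q_{v_2}$ and $q_{v_3}$ to pin down $q_{v_4}$ via the sign of $g$ --- with 3-cycle general position (propagated by Lemma \ref{a1}) ruling out the colinear one-point intersection in each case. The sign-separation bookkeeping you flag as delicate is exactly how the paper concludes: the alternative intersection point would flip $f$ (respectively $g$), contradicting $f(q^1)=f(q^2)$ and $g(q^1)=g(q^2)$.
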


\begin{proof}
By part \ref{a1item1} of Lemma \ref{a1}, $q^1,q^2$ are in 3-cycle general position. As $q^1_{v_1},q^1_{v_2}, q^1_{v_3}$ are not colinear then by Lemma \ref{a5} there exists exactly one other point $z \in X$ such that $\|z - q^1_{v_1}\| = \|q^1_{v_3} - q^1_{v_1}\|$ and $\|z - q^1_{v_2}\| = \|q^1_{v_3} - q^1_{v_2}\|$. We note that as $q^1_{v_1}=q^2_{v_1} =0$ and $q^1_{v_2}=q^2_{v_2}$ then $q^2_{v_3}=q^1_{v_3}$ or $q^2_{v_3} =z$. By part \ref{a5item2} of Lemma \ref{a5}, either $z \theta q^1_{v_2}$ and $q^1_{v_3} \theta z$ or vice versa. If $q^2_{v_3} =z$ then $f(q^2)= -f(q^1)$, thus $q^2_{v_3}=q^1_{v_3}$. 

Similarly, as $q^1_{v_2},q^1_{v_3}, q^1_{v_4}$ are not colinear then by Lemma \ref{a5} there exists exactly one other point $z' \in X$ such that $\|z' - q^1_{v_2}\| = \|q^1_{v_4} - q^1_{v_2}\|$ and $\|z' - q^1_{v_3}\| = \|q^1_{v_4} - q^1_{v_3}\|$. By part \ref{a5item2} of Lemma \ref{a5}, $z', q^1_{v_4}$ are on the opposite sides of $L(q^1_{v_2}, q^1_{v_3})$. If $q^2_{v_4} =z'$ then $g(q^2)= -g(q^1)$, thus $q^2_{v_4}=q^1_{v_4}$. 
\end{proof}

\begin{lemma}\label{a6}
Let $X$, $p$ and $V(p)$ be as defined in Lemma \ref{a2}. The path-connected components of $V(p)$ are exactly $f^{-1}[1] \cap g^{-1}[1]$, $f^{-1}[1] \cap g^{-1}[-1]$, $f^{-1}[-1] \cap g^{-1}[1]$ and $f^{-1}[-1] \cap g^{-1}[-1]$. Further, each $f^{-1}[i] \cap g^{-1}[j]$ component is a path-connected compact Hausdorff 1-dimensional $C^1$-manifold.
\end{lemma}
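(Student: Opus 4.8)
The plan is to exploit that the sign maps $f,g$ of Lemma \ref{a3} are continuous into the discrete set $\{-1,1\}$, so the four sets $A_{ij}:=f^{-1}[i]\cap g^{-1}[j]$ (for $i,j\in\{-1,1\}$) are simultaneously open and closed and partition $V(p)$. Consequently every path-component of $V(p)$ lies inside a single $A_{ij}$, and the whole statement reduces to showing that each $A_{ij}$ is non-empty and path-connected. The claimed manifold properties are then automatic: an open subset of the $1$-dimensional $C^1$-manifold $V(p)$ (Lemma \ref{a2}) is again a $1$-dimensional $C^1$-manifold, a closed subset of the compact Hausdorff space $V(p)$ is compact Hausdorff, and a connected $1$-manifold is path-connected.

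For connectedness I would use the projection $\pi:V(p)\to S:=S_{\|p_{v_2}\|}[0]$, $q\mapsto q_{v_2}$, which is continuous and, by Lemma \ref{a5.5}, injective on each $A_{ij}$. Since $V(p)$ is a compact boundaryless $1$-manifold, so is each clopen $A_{ij}$, hence a finite disjoint union of circles. A continuous injection of a circle into the circle $S$ is a homeomorphism onto its image (a continuous injection from a compact space into a Hausdorff space), and the only subset of $S$ homeomorphic to a circle is $S$ itself; thus every component of $A_{ij}$ maps homeomorphically, and in particular surjectively, onto $S$. If $A_{ij}$ had two distinct circle components they would both surject onto $S$ and hence share a value of $\pi$, contradicting injectivity on $A_{ij}$. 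So each non-empty $A_{ij}$ is a single circle, and is path-connected.

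The main obstacle is non-emptiness of all four sets. Writing $p\in A_{f_0,g_0}$, flipping $v_4$ to the second point of $S_{\|p_{v_4}-p_{v_2}\|}[p_{v_2}]\cap S_{\|p_{v_4}-p_{v_3}\|}[p_{v_3}]$ (available by part \ref{a5item2} of Lemma \ref{a5}, since $p$ is in $3$-cycle general position) lands in $A_{f_0,-g_0}$, so the $f_0$-branch is fine. To reach the opposite orientation I would analyse the triangle $v_1v_2v_3$ on its own: its configuration space $T$ (with $v_1$ pinned) is, by the reasoning of Lemma \ref{a2}, a compact $1$-manifold, $f$ extends to it, and the clopen/projection argument above shows $T=T^{+}\sqcup T^{-}$ with each non-empty part a single circle on which both $q_{v_2}$ and $q_{v_3}$ restrict to homeomorphisms onto their spheres. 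Both parts are non-empty: $T^{f_0}$ contains the base of $p$, while the second point of $S_{\|p_{v_3}\|}[0]\cap S_{\|p_{v_3}-p_{v_2}\|}[p_{v_2}]$ supplies a base of orientation $-f_0$.

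Finally, on the circle $T^{-f_0}$ the base map $\beta:\tau\mapsto q_{v_3}-q_{v_2}$ takes values in $S_{\|p_{v_3}-p_{v_2}\|}[0]$, and the straight-line homotopy $(\tau,s)\mapsto q_{v_3}-s\,q_{v_2}$ stays inside $X\setminus\{0\}$ because $q_{v_2}$ and $q_{v_3}$ are linearly independent (the configuration being in $3$-cycle general position with $q_{v_1}=0$, cf.\ part \ref{a1item1} of Lemma \ref{a1}); hence $\beta$ is homotopic to the degree-$\pm1$ map $q_{v_3}$ and so has non-zero winding number, making it surjective onto $S_{\|p_{v_3}-p_{v_2}\|}[0]$. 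The set of base directions for which the spheres $S_{\|p_{v_4}-p_{v_2}\|}[0]$ and $S_{\|p_{v_4}-p_{v_3}\|}[\,v\,]$ meet is non-empty and, near the base direction of $p$, carries two transverse solutions by part \ref{a5item2} of Lemma \ref{a5}, so it contains an open arc. Surjectivity of $\beta$ forces $T^{-f_0}$ to meet this arc, yielding configurations of orientation $-f_0$ with both placements of $v_4$, i.e.\ points of $A_{-f_0,1}$ and $A_{-f_0,-1}$. Thus all four $A_{ij}$ are non-empty, and together with the previous paragraphs they are exactly the path-components, each a compact Hausdorff path-connected $1$-dimensional $C^1$-manifold. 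I expect the winding-number surjectivity of $\beta$, together with the verification that the set of realizable base directions has non-empty interior, to be the delicate points.
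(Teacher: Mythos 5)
Your proposal is correct, and its path-connectedness half is essentially the paper's own argument: the paper likewise observes (via Lemma \ref{a2} and the classification of compact connected $1$-manifolds) that each path-component of a class $f^{-1}[i]\cap g^{-1}[j]$ is a circle, uses Lemma \ref{a5.5} to make the $v_2$-projection injective on the class, upgrades it to a homeomorphism onto $S_{\|p_{v_2}\|}[0]$ (the paper invokes invariance of domain where you use the fact that a circle cannot embed in a proper subset of a circle), and then notes that two disjoint components would both surject onto that sphere, contradicting injectivity. Where you genuinely diverge is non-emptiness: the paper dispatches it in one sentence, ``by multiple applications of Lemma \ref{a5}'', and this terseness hides exactly the issue you isolate — after flipping the orientation of the triangle $v_1v_2v_3$, Lemma \ref{a5} by itself does not guarantee that the two spheres which must contain $v_4$ still intersect, since non-empty intersection is its \emph{hypothesis}. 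Your detour through the pinned triangle's configuration space $T$ and the winding-number surjectivity of $\beta:\tau\mapsto q_{v_3}-q_{v_2}$ closes that gap rigorously, so your write-up proves more than the paper records for this step. Two remarks. First, your final paragraph is more elaborate than needed: once $\beta$ is surjective you may hit the exact direction, i.e.\ choose $\tau\in T^{-f_0}$ with $q_{v_3}(\tau)-q_{v_2}(\tau)=p_{v_3}-p_{v_2}$, and then $q_{v_4}:=q_{v_2}(\tau)+(p_{v_4}-p_{v_2})$ has the required distances outright; this makes the ``open arc of realizable directions'' discussion unnecessary — which is just as well, since its transversality claim is the one step that the statement of Lemma \ref{a5} does not actually support (it yields two intersection points on opposite sides of a line, not transverse crossings; stability would have to be extracted from the monotonicity in that lemma's proof). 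Second, the whole winding-number apparatus can be bypassed: in any normed plane, $S_r[a]\cap S_s[b]\neq\emptyset$ if and only if $|r-s|\leq\|a-b\|\leq r+s$ (apply the intermediate value theorem to $x\mapsto\|x-b\|$ on the connected set $S_r[a]$), and since this criterion depends only on $\|a-b\|$, which is unchanged when $v_3$ is flipped, the placement of $v_4$ survives the flip automatically. Either route is sound; yours is self-contained, the alternative is shorter.
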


\begin{proof}
By multiple applications of Lemma \ref{a5} it follows that $f^{-1}[1] \cap g^{-1}[1]$, $f^{-1}[1] \cap g^{-1}[-1]$, $f^{-1}[-1] \cap g^{-1}[1]$ and $f^{-1}[-1] \cap g^{-1}[-1]$ are non-empty sets.

Choose $i,j \in \{1,-1\}$. Suppose there exists disjoint path-connected components of $A , B \subset f^{-1}[i] \cap g^{-1}[j]$, then by Lemma \ref{a2}, $A, B$ are both path-connected compact Hausdorff 1-dimensional $C^1$-manifolds. As every path-connected compact Hausdorff 1-dimensional manifold is homeomorphic to a circle (see \cite[Theorem 5.27]{topmanlee}) we may define the homeomorphisms $\alpha : \mathbb{T} \rightarrow A$ and $\beta: \mathbb{T} \rightarrow B$. We will define $\alpha_{v_i}$, $\beta_{v_i}$ to be the $v_i$ component of $\alpha$ and $\beta$ respectively.

Suppose there exists $z_1,z_2 \in \mathbb{T}$ such that $\alpha_{v_2}(z_1) = \alpha_{v_2}(z_2)$, then by Lemma \ref{a5.5}, $\alpha(z_1)= \alpha(z_2)$, thus the map $\alpha_{v_2}: \mathbb{T} \rightarrow S_{\|p_{v_2}\|}[0]$ is injective; similarly, the map $\beta_{v_2}: \mathbb{T} \rightarrow S_{\|p_{v_2}\|}[0]$ is also injective. As $\mathbb{T}$ is compact then by the Brouwer's theorem for invariance of domain \cite[Theorem 1.18]{manifoldlee} it follows $\alpha_{v_2}, \beta_{v_2}$ are homeomorphisms, thus we may choose $z, z' \in \mathbb{T}$ so that $\alpha_{v_2}(z)= \beta_{v_2}(z')$. By Lemma \ref{a5.5} it follows $\alpha(z)=\beta(z')$ and $A,B$ are not disjoint path-connected components. 
\end{proof}

\begin{lemma}\label{a7}
Let $X$, $p$ and $V(p)$ be as defined in Lemma \ref{a2} and $V_0(p)$ be the path-connected component of $V(p)$ that contains $p$. Suppose $p_{v_4}=p_{v_2}+ p_{v_3}$, then for all $q \in V_0(p)$ we have $q_{v_4}=q_{v_2} +q_{v_3}$.
\end{lemma}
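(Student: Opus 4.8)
The plan is to exploit the fact that the hypothesis $p_{v_4}=p_{v_2}+p_{v_3}$, together with $p_{v_1}=0$, says precisely that $p_{v_1},p_{v_2},p_{v_4},p_{v_3}$ are the vertices of a parallelogram. Concretely, since $p_{v_4}-p_{v_2}=p_{v_3}$ and $p_{v_4}-p_{v_3}=p_{v_2}$, this yields the two length identities $\|p_{v_4}-p_{v_2}\|=\|p_{v_3}\|$ and $\|p_{v_4}-p_{v_3}\|=\|p_{v_2}\|$. The point of these identities is that, for \emph{any} $q\in V(p)$, the parallelogram completion $q_{v_2}+q_{v_3}$ of the triangle $(q_{v_1},q_{v_2},q_{v_3})$ automatically realises the two edge lengths at $v_4$, so it is a legitimate position for the fourth vertex.

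First I would introduce the \emph{re-completion map}
\begin{align*}
\sigma : V(p) \rightarrow X^{V(K_4-e)}, \quad \sigma(q) := (0,\,q_{v_2},\,q_{v_3},\,q_{v_2}+q_{v_3}),
\end{align*}
and verify that it maps $V(p)$ into $V(p)$. Every $q\in V(p)$ has $q_{v_1}=0$ and matches $p$ on the edge lengths $v_1v_2$, $v_1v_3$, $v_2v_3$, so $\sigma(q)$ inherits all of these; for the two edges at $v_4$ one computes $\|\sigma(q)_{v_4}-\sigma(q)_{v_2}\|=\|q_{v_3}\|=\|p_{v_3}\|=\|p_{v_4}-p_{v_2}\|$ and symmetrically $\|\sigma(q)_{v_4}-\sigma(q)_{v_3}\|=\|q_{v_2}\|=\|p_{v_2}\|=\|p_{v_4}-p_{v_3}\|$, using the parallelogram identities. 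Hence $\sigma(q)\in V(p)$, the map $\sigma$ is continuous, and $\sigma(p)=p$ because $p$ is itself a parallelogram placement.

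Next I would run a connectedness argument followed by an application of the uniqueness statement already proved. Since $\sigma$ is continuous and $V_0(p)$ is connected with $p\in V_0(p)$, the image $\sigma(V_0(p))$ is a connected subset of $V(p)$ containing $\sigma(p)=p$, and therefore $\sigma(V_0(p))\subseteq V_0(p)$. Now fix $q\in V_0(p)$ and set $q':=\sigma(q)\in V_0(p)$; by construction $q'_{v_2}=q_{v_2}$. Because $f$ and $g$ take values in the discrete set $\{-1,1\}$ and are continuous (Lemma \ref{a3}), while $V_0(p)$ is connected, both $f$ and $g$ are constant on $V_0(p)$, so $f(q')=f(q)$ and $g(q')=g(q)$. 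The three hypotheses of Lemma \ref{a5.5} are thus satisfied, giving $q'=q$; in particular $q_{v_4}=q'_{v_4}=q_{v_2}+q_{v_3}$, which is the claim.

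The only genuinely delicate point is the decision to work with the self-map $\sigma$ rather than attempting to track $q_{v_4}$ directly along a path in $V_0(p)$: the pointwise parallelogram completion of a configuration need not \emph{a priori} lie in the same path-component as $q$, so one cannot simply assert it. What closes the loop is the combination of (i) connectedness of $V_0(p)$, which forces $\sigma(V_0(p))\subseteq V_0(p)$, and (ii) the uniqueness of Lemma \ref{a5.5}, which applies precisely because $f$ and $g$ are constant on the component $V_0(p)$. Everything else reduces to the routine length verification establishing that $\sigma$ is well-defined.
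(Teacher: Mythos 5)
Your proof is correct, and its skeleton is the paper's: form the parallelogram completion $q'=\sigma(q)$ of $q$, check via the identities $\|p_{v_4}-p_{v_2}\|=\|p_{v_3}\|$ and $\|p_{v_4}-p_{v_3}\|=\|p_{v_2}\|$ that $q'\in V(p)$, and then invoke the uniqueness statement of Lemma \ref{a5.5} to force $q'=q$. The one genuine difference is how the hypothesis $g(q')=g(q)$ of Lemma \ref{a5.5} is verified. The paper never places $q'$ in $V_0(p)$: it argues by contradiction, noting that $q'\neq q$ would force $g(q')=-g(q)=-g(p)$ by Lemma \ref{a5.5}, and refutes this by the direct computation $L(q'_{v_2},q'_{v_3})(q'_{v_4})=L(q'_{v_2},q'_{v_3})(q'_{v_2}+q'_{v_3})=2>1$, which gives $g(q')=1$, together with the same computation for $p$ giving $g(p)=1$ (the values $f(q)=f(p)$, $g(q)=g(p)$ come from Lemma \ref{a6}). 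You instead observe that $\sigma$ is a continuous self-map of $V(p)$ fixing $p$, so $\sigma(V_0(p))$ is a path-connected subset of $V(p)$ containing $p$ and hence lies in $V_0(p)$; constancy of $f,g$ on the component (via Lemma \ref{a3}) then supplies the hypotheses of Lemma \ref{a5.5} directly. Both arguments are sound and of comparable length: the paper's is more elementary, needing nothing beyond the single configuration $q'$, while yours never evaluates $g$ at all, at the price of the self-map/connectedness observation. One cosmetic point: you say the image $\sigma(V_0(p))$ is \emph{connected}, but what you need (and what you in fact have, since the continuous image of a path-connected set is path-connected) is that it is \emph{path}-connected, as $V_0(p)$ is defined as a path-component; the two notions agree here anyway because $V(p)$ is a manifold by Lemma \ref{a2}.
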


\begin{proof}
Choose $q \in V_0(p)$ then by Lemma \ref{a6}, $f(q) = f(p)$ and $g(q) = g(p)$. Define $q'$ to be the placement of $K_4-e$ where $q'_{v_i}=q_{v_i}$ for $i=1,2,3$ and $q'_{v_4}=q'_{v_2} +q'_{v_3}$. We immediately note $q' \in V(p)$ and $f(q')=f(p)$. Suppose $q'\neq q$, then by Lemma \ref{a5.5} we must have $-g(q')= g(q) =g(p)$; however 
\begin{align*}
L(p_{v_2}, p_{v_3})(p_{v_4})= L(p_{v_2}, p_{v_3})(p_{v_2} + p_{v_3})= 2 >1
\end{align*}
and 
\begin{align*}
L(q'_{v_2}, q'_{v_3})(q'_{v_4})= L(q'_{v_2}, q'_{v_3})(q'_{v_2} + q'_{v_3})= 2 >1,
\end{align*}
and so $g(q')=1=g(p)$, a contradiction, thus $q'=q$ and the result follows.
\end{proof}

We will finally need the following result which will help us separate when we are dealing with Euclidean and non-Euclidean normed planes.

\begin{theorem}\cite[p. 323]{alonsoben}\label{a8}
If $X$ is a non-Euclidean normed plane then for all $0< \epsilon <2$ where $\epsilon \neq 2 \cos (k\pi/2n)$ ($n, k \in \mathbb{N}$, $1 \leq k \leq n$),
\begin{align*}
\inf \{\|a+b\|: \|a-b\|= \epsilon, ~ \|a\|=\|b\|=1\} < \sup \{\|a+b\| : \|a-b\|= \epsilon, ~ \|a\|=\|b\|=1\}.
\end{align*}
\end{theorem}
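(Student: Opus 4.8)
The plan is to prove the contrapositive: assuming $X$ is non-Euclidean, I will show that if $\|a+b\|$ is constant on $M_\epsilon := \{(a,b) : \|a\|=\|b\|=1,\ \|a-b\|=\epsilon\}$, then $\epsilon$ must equal $2\cos(k\pi/2n)$ for some admissible $k,n$; since $\inf=\sup$ over $M_\epsilon$ says exactly that $\|a+b\|$ is constant there, this gives the theorem. First I would dispose of the non--strictly--convex case: if $S_1[0]$ contains a nondegenerate segment, then two chords of length $\epsilon$ obtained by sliding an endpoint along the flat realise different values of $\|a+b\|$, so $\inf<\sup$ holds immediately. Hence I may assume $X$ is strictly convex, and by the approximation results of Section \ref{Approximation by well-positioned frameworks} it suffices to treat the smooth strictly convex case.

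The key reformulation is the substitution $m=\tfrac12(a+b)$, $d=\tfrac12(a-b)$. A pair $(a,b)\in M_\epsilon$ corresponds exactly to a pair $(m,d)$ with $\|d\|=\epsilon/2$ and $\|m+d\|=\|m-d\|=1$, i.e.~with $m$ isosceles--orthogonal to $d$, and then $\|a+b\|=2\|m\|$. Thus constancy of $\|a+b\|$ on $M_\epsilon$ is equivalent to the assertion that the locus $\Gamma_\epsilon$ of admissible midpoints $m$ lies in a single sphere $S_{c/2}[0]$; as $\Gamma_\epsilon$ is a connected continuous image, it then equals the full homothet $S_{c/2}[0]$ of the unit circle. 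This places the problem squarely in the theory of isosceles orthogonality, whose homogeneity characterises inner--product spaces, and it is the failure of that homogeneity in the non-Euclidean case that I intend to exploit.

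For the heart of the argument I would normalise by the inner L\"owner ellipsoid so that an auxiliary Euclidean structure is available, and parametrise the unit circle by the angular coordinate supplied by the homeomorphism $\theta:S_1[0]\to\mathbb{T}$ defined above. Writing the radial function of $S_1[0]$ as a Fourier series in this coordinate, the constant--sum condition for chords of length $\epsilon$ becomes a linear resonance equation on the Fourier coefficients, annihilating every nonconstant mode except those at frequencies commensurate with the ``rotation'' induced by stepping along $\epsilon$--chords. Geometrically this rotation is carried by the orientation--preserving homeomorphism $\rho_\epsilon$ of $S_1[0]$ sending $a$ to its counter-clockwise $\epsilon$--chord partner (well defined and monotone by Lemma \ref{monlemma}); the resonance amounts to $\rho_\epsilon$ having rational rotation number $p/q$, so that an orbit closes into an inscribed affine--regular $q$--gon whose step skips $p$ vertices. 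Comparing with the Euclidean reference, the chord of such a polygon subtends $2\sin(p\pi/q)=2\cos\big((q-2p)\pi/2q\big)$, which is precisely the exceptional form $2\cos(k\pi/2n)$ with $k=q-2p$ and $n=q$. Because $X$ is non-Euclidean its radial function is not that of an ellipse, so some nonconstant Fourier mode is present; the constant--sum equation then forces this mode to sit at a resonant frequency, pinning $\epsilon$ to an exceptional value. (If instead every nonconstant mode were forced to vanish, the radial function would describe an ellipse and $X$ would be Euclidean, contradicting the finiteness of $\Isolin(X)$ from part \ref{paperiso1item2} of Proposition \ref{paperiso1}.)

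The main obstacle is the precise resonance bookkeeping in the third step: verifying that a strictly convex, non-Euclidean unit circle can carry a constant--sum family of $\epsilon$--chords \emph{only} at the lengths $2\cos(k\pi/2n)$, and at no other rational rotation number, requires carefully solving the Fourier equation and ruling out nontrivial cancellations among surviving modes. A secondary technical point is transferring the conclusion from the smooth strictly convex case, where $\rho_\epsilon$ and the Fourier analysis are cleanest, back to an arbitrary non-Euclidean plane through the approximation and limiting arguments already developed in the paper.
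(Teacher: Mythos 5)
First, a structural point: the paper does not prove Theorem \ref{a8} at all. It is imported as a black-box citation from Alonso and Ben\'itez \cite[p. 323]{alonsoben}, so there is no internal proof to compare yours against; your proposal has to stand entirely on its own. It does not. The whole burden of the theorem is carried by your third step, the ``resonance bookkeeping'', and you never execute it --- indeed you flag it yourself as the main obstacle. The difficulty is not mere bookkeeping. The claim that the constant-sum condition becomes a \emph{linear} equation on the Fourier coefficients of the radial function is unjustified: the chord condition $\|a-b\|=\epsilon$ is measured in the unknown norm itself, so the partner map $\rho_\epsilon$ depends nonlinearly on the unit ball, and there is no reason the constancy condition decouples mode-by-mode. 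Likewise, the identification of a rational rotation number $p/q$ with the exceptional lengths via $2\sin(p\pi/q)=2\cos((q-2p)\pi/2q)$ is made ``by comparing with the Euclidean reference'', but chord lengths of an inscribed affine-regular $q$-gon in a non-Euclidean norm are not given by the Euclidean formula; making that comparison rigorous is essentially the content of the Alonso--Ben\'itez theorem, not a step one may borrow while proving it. What you have is a plausible research plan in the correct spirit (midpoints, isosceles orthogonality, a rotation-number dichotomy), not a proof.

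Two of your supporting reductions are also unsound as stated. The reduction to the smooth strictly convex case ``by the approximation results of Section \ref{Approximation by well-positioned frameworks}'' cannot work: those results (framework limits, pseudo-rigidity matrices, Proposition \ref{framelimit}) transfer row-independence of rigidity matrices along convergent sequences of placements in a \emph{fixed} norm, and say nothing about approximating one norm by another; moreover a strict inequality $\inf<\sup$ does not pass to a limit of norms, since the gap may shrink to zero, so even a genuine smoothing argument would transfer the conclusion in the wrong direction. And the non-strictly-convex case is not ``immediate'': if both endpoints of the chord slide along a flat segment of $S_1[0]$, then $a+b$ moves along a line and $\|a+b\|$ can perfectly well be constant --- in the $\ell_\infty$ plane with $\epsilon=1$ the pairs $a=(1,t)$, $b=(1,t-1)$, $t\in[0,1]$, all have $\|a+b\|=2$. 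That value of $\epsilon$ is exceptional, so this is not a counterexample to the theorem, but it does show that ``sliding along the flat'' does not automatically produce non-constancy; an actual argument, restricted to non-exceptional $\epsilon$ and using chords with at most one endpoint on the flat, is missing. Given that the paper itself treats this statement as a known external result, the honest options are to cite it likewise or to reconstruct the Alonso--Ben\'itez argument in full; the present proposal does neither.
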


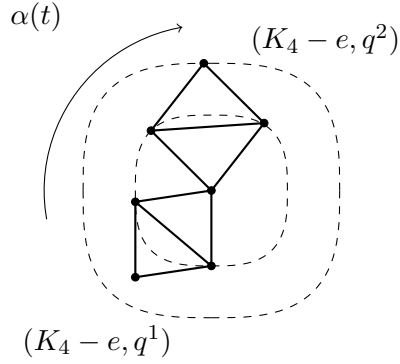
\begin{figure}
\[
\begin{tikzpicture}
\draw[thick] (0,0) -- (-0.794,0.794); 
\draw[thick] (0,0) -- (0.694,0.894); 

\draw[thick] (-0.794,0.794) -- (-0.1,1.687); 
\draw[thick] (-0.794,0.794) -- (0.694,0.894); 
\draw[thick] (0.694,0.894) -- (-0.1,1.687); 

\draw [dashed,domain=0:90] plot ({(cos(\x))^(2/3)}, {(sin(\x))^(2/3)});
\draw [dashed,domain=90:180] plot ({-(-cos(\x))^(2/3)}, {(sin(\x))^(2/3)});
\draw [dashed,domain=180:270] plot ({-(-cos(\x))^(2/3)}, {-(-sin(\x))^(2/3)});
\draw [dashed,domain=270:360] plot ({(cos(\x))^(2/3)}, {-(-sin(\x))^(2/3)});

\draw [dashed,domain=0:90] plot ({1.687*(cos(\x))^(2/3)}, {1.687*(sin(\x))^(2/3)});
\draw [dashed,domain=90:180] plot ({-1.687*(-cos(\x))^(2/3)}, {1.687*(sin(\x))^(2/3)});
\draw [dashed,domain=180:270] plot ({-1.687*(-cos(\x))^(2/3)}, {-1.687*(-sin(\x))^(2/3)});
\draw [dashed,domain=270:360] plot ({1.687*(cos(\x))^(2/3)}, {-1.687*(-sin(\x))^(2/3)});

\node at (1.5,2) {$(K_4-e,q^2)$};

\draw[fill] (0,0) circle [radius=0.05];
\draw[fill] (-0.794,0.794) circle [radius=0.05];
\draw[fill] (0.694,0.894) circle [radius=0.05];
\draw[fill] (-0.1,1.687) circle [radius=0.05];

\draw[thick] (0,0) -- (-1,-0.15); 
\draw[thick] (0,0) -- (0,-1); 

\draw[thick] (-1,-0.15) -- (0,-1); 
\draw[thick] (-1,-0.15) -- (-1,-1.15); 
\draw[thick] (0,-1) -- (-1,-1.15);

\draw[fill] (-1,-0.15) circle [radius=0.05];
\draw[fill] (0,-1) circle [radius=0.05];
\draw[fill] (-1,-1.15) circle [radius=0.05];

\node at (-1.5,-2) {$(K_4-e,q^1)$};

\node at (-2.3,2.3) {$\alpha(t)$};
\draw[<-,domain=100:190] plot ({2.2*cos(\x)}, {2.2*sin(\x)});

\end{tikzpicture}
\]
\caption{The frameworks $(K_4-e ,q^1)$ and $(K_4-e, q^2)$ in some strictly convex and smooth normed plane $X$, as described in Lemma \ref{keylemma3}. The inner dotted shape represents the unit sphere of $X$ and the outer dotted shape represents the sphere of $X$ with radius $\|q^2_{v_4}\|$. As the framework follows the differentiable path $\alpha(t)$ the distance $\| \alpha_{v_1}(t) - \alpha_{v_4}(t) \|$ is non-constant; when the derivative of $t \mapsto \| \alpha_{v_1}(t) - \alpha_{v_4}(t) \|$ is non-zero at point $s$ we add the edge $v_1 v_4$ and note $(K_4, \alpha(s))$ will be infinitesimally rigid.}\label{strconsmoothgraph}
\end{figure}

\begin{lemma}\label{keylemma3}
Let $X$ be a normed plane that is strictly convex and smooth, then $K_4$ is rigid in $X$.
\end{lemma}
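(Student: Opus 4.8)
The plan is to realise $K_4$ as $K_4-e$ together with the edge $e=v_1v_4$, and to exploit the configuration space $V(p)$ studied in Lemmas \ref{a2}--\ref{a7}. First I would dispose of the Euclidean sub-case: if $X$ is Euclidean then $K_4-e$ is $(2,3)$-tight, hence isostatic by Theorem \ref{lamanstheorem}, so $K_4 \supseteq K_4-e$ is rigid. For the remainder assume $X$ is non-Euclidean, so that $\dim \mathcal{T}(q)=2$ for every placement $q$. The key reduction is that a placement $(K_4,q)$ with $q_{v_1}=0$ is infinitesimally rigid precisely when the (essentially unique) non-trivial infinitesimal flex $u$ of $(K_4-e,q)$ with $u_{v_1}=0$ — the tangent vector to the $1$-manifold $V(p)$ at $q$ — fails to satisfy the $e$-th rigidity equation $\varphi_{v_1,v_4}(u_{v_1}-u_{v_4})=0$. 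Since $q_{v_1}=0$ and $X$ is smooth, part \ref{paper1item3} of Proposition \ref{paper1} identifies this quantity, up to a nonzero scalar, with the derivative of $t\mapsto \|\alpha_{v_4}(t)\|$ along any $C^1$ parametrisation $\alpha$ of $V(p)$ through $q$. Thus it suffices to produce a placement $p$ for which $h(q):=\|q_{v_4}\|$ is non-constant on a single path-component of $V(p)$.

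To build such a $p$ I would use Theorem \ref{a8}. Fix $\epsilon\in(0,2)$ avoiding the countably many excluded values $2\cos(k\pi/2n)$; then the infimum and supremum of $\|a+b\|$ over $\{\|a\|=\|b\|=1,\ \|a-b\|=\epsilon\}$ are distinct, so there exist pairs $(a_1,b_1),(a_2,b_2)$ on the unit sphere with $\|a_i-b_i\|=\epsilon$ and $\|a_1+b_1\|\neq\|a_2+b_2\|$. Using the swap symmetry $(a,b)\mapsto(b,a)$, which preserves both $\|a-b\|$ and $\|a+b\|$ but reverses the relation $a\,\theta\,b$, I may assume $a_i\,\theta\,b_i$ for both $i$. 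Now set $q^i_{v_1}=0$, $q^i_{v_2}=a_i$, $q^i_{v_3}=b_i$ and $q^i_{v_4}=a_i+b_i$. Because $\epsilon<2$ forces $a_i,b_i$ linearly independent and $a_i+b_i\neq0$, each $q^i$ is in $3$-cycle general position and, by smoothness, well-positioned; the parallelogram relation makes all five edge-lengths of $K_4-e$ equal to $1,1,\epsilon,1,1$, so $q^1,q^2$ lie in the same $V(p)$. Moreover both satisfy $f=1$ (the chosen branch) and $g=1$ (since $L(q^i_{v_2},q^i_{v_3})(q^i_{v_4})=2>1$), so by Lemma \ref{a6} they lie in the common path-connected component $V_0:=f^{-1}[1]\cap g^{-1}[1]$.

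Finally I would conclude. On $V_0$, Lemma \ref{a7} guarantees $q_{v_4}=q_{v_2}+q_{v_3}$, so $h(q)=\|q_{v_2}+q_{v_3}\|$ is a $C^1$ function (the coordinate $q_{v_4}$ is $C^1$ on the manifold and never vanishes, as $\|q_{v_2}-q_{v_3}\|=\epsilon<2$ prevents $q_{v_2}=-q_{v_3}$) taking the two distinct values $\|a_1+b_1\|,\|a_2+b_2\|$; being non-constant on the connected $C^1$ $1$-manifold $V_0$, it has nonzero derivative at some point $q=\alpha(s)$. The tangent vector $u=\alpha'(s)$ is then a nonzero flex of $(K_4-e,q)$ with $u_{v_1}=0$ and, by the derivative identification above, $\varphi_{v_1,v_4}(u_{v_4})\neq0$; hence $u$ is not a flex of $(K_4,q)$. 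Since $(K_4-e,q)$ is independent by Lemma \ref{a1}, the space of its flexes vanishing at $v_1$ is the $1$-dimensional tangent space $T_qV_0=\spann\{u\}$, so adding $e$ leaves no non-trivial $v_1$-pinned flex; therefore $\dim\mathcal{F}(K_4,q)=\dim\mathcal{T}(q)=2$ and $(K_4,q)$ is isostatic by Proposition \ref{isostatic}, proving $K_4$ is rigid in $X$.

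I expect the main obstacle to be the middle step: transferring the purely metric inf/sup separation of Theorem \ref{a8} into non-constancy of $h$ along one connected component of $V(p)$. The delicate points are (a) arranging both witnessing pairs on the same $\theta$-branch and with the parallelogram/$g=1$ normalisation so that Lemma \ref{a6} places them in the same component, and (b) the clean identification, via smoothness and part \ref{paper1item3} of Proposition \ref{paper1}, of the infinitesimal edge-$e$ equation with the derivative of the finite distance $\|\alpha_{v_4}(t)\|$, which is what links ``the distance varies'' to ``the added bar removes the flex''.
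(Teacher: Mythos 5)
Your proposal is correct and follows essentially the same route as the paper's proof: dispose of the Euclidean case via Theorem \ref{lamanstheorem}, then use Theorem \ref{a8} to obtain two parallelogram placements lying in the same component $f^{-1}[1]\cap g^{-1}[1]$ of the configuration space $V(p)$ with distinct values of $\|q_{v_4}\|$, and conclude that at some point the tangent flex of $K_4-e$ violates the edge-$e$ equation because the derivative of $t\mapsto\|\alpha_{v_4}(t)\|$ is nonzero there. The only cosmetic differences are that you take any two pairs with distinct $\|a+b\|$ rather than the inf/sup-attaining pairs, and you argue via local constancy of a $C^1$ function on a connected $1$-manifold where the paper invokes the Mean Value Theorem.
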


\begin{proof}
If $X$ is Euclidean this follows from Theorem \ref{lamanstheorem} so suppose $X$ is non-Euclidean.

Choose any $0 < \epsilon < 2$ so that $\epsilon \neq 2 \cos (k\pi/2n)$ for all $n, k \in \mathbb{N}$ with $1 \leq k \leq n$. By the continuity of the norm we may choose a placement $p$ of $K_4$ so that: 
\begin{enumerate}[(i)]
\item $p_{v_1}=0$,

\item $\|p_{v_2}\|=\|p_{v_3}\| =1$,

\item $p_{v_2} \theta p_{v_3}$,

\item $\|p_{v_2} - p_{v_3}\| = \epsilon$,

\item $p_{v_4}= p_{v_2}+p_{v_3}$,
\end{enumerate}
We note $f(p)=1$ as $p_{v_2} \theta p_{v_3}$, and $g(p)=1$ as 
\begin{align*}
L(p_{v_2}, p_{v_3})(p_{v_4})= L(p_{v_2}, p_{v_3})(p_{v_2} + p_{v_3})= 2 >1.
\end{align*}

We note that $(K_4-e,p)$ is in 3-cycle general position and so by Lemma \ref{a2} and Lemma \ref{a6}, $V_0(p) = f^{-1}[1] \cap g^{-1}[1]$ is a path-connected compact Hausdorff 1-dimensional $C^1$-manifold. We note that for every pair $a,b$ in $S_1[0]$ with $\|a-b\|=\epsilon$ there exists $q \in V_0(p)$ so that $q_{v_2}=a$ and $q_{v_3}=b$ or vice versa, thus there exists $q^1, q^2 \in V_0(p)$ so that
\begin{align*}
\|q^1_{v_4}\| = \inf \{\|a+b\| : \|a-b\|= \epsilon, ~ \|a\|=\|b\|=1\} \\
\|q^2_{v_4}\| = \sup\{\|a+b\| : \|a-b\|= \epsilon, ~ \|a\|=\|b\|=1\};
\end{align*}
further, by Theorem \ref{a8} we have that $\|q^2_{v_4}\|- \|q^1_{v_4}\| >0$. 

As $V_0(p)$ is a path connected $C^1$-manifold that is $C^1$-diffeomorphic to $\mathbb{T}$ we may define a $C^1$-differentiable path $\alpha:[0,1] \rightarrow V_0(p)$ where $\alpha(0)=q^1$, $\alpha(1)=q^2$ and $\alpha'(t) \neq 0$ for all $t \in [0,1]$. By Lemma \ref{a7}, $\alpha_{v_4}(t) = \alpha_{v_2}(t) + \alpha_{v_3}(t)$ for all $t \in [0,1]$; further, as $\alpha_{v_2}(t), \alpha_{v_3}$ are linearly independent, thus $\alpha_{v_4}(t) \neq 0$. As $X$ is smooth, $(K_4,\alpha(t))$ is well-positioned for all $t \in [0,1]$. By part \ref{paper1item1} and part \ref{paper1item3} of Proposition \ref{paper1}, for all $1 \leq i < j \leq 4$, $(i,j) \neq (1,4)$ and $t \in [0,1]$,
\begin{align*}
0=\frac{d}{dt} \|\alpha_{v_i}(t) - \alpha_{v_j}(t)\| = \varphi\left(\frac{\alpha_{v_i}(t) - \alpha_{v_j}(t)}{\|\alpha_{v_i}(t) - \alpha_{v_j}(t)\|} \right)(\alpha'_{v_i}(t) - \alpha'_{v_j}(t)),
\end{align*}
thus $\alpha'(t)$ is a non-trivial flex of $(K_4 - e,\alpha(t))$ with $\alpha'_{v_1}(t)=0$. By part \ref{a1item2} of Lemma \ref{a1}, $(K_4 - e,\alpha(t))$ is independent and so it follows from Theorem \ref{maxwellpaper} that $\alpha'(t)$ is the unique (up to scalar multiplication) non-trivial flex of $(K_4-e,\alpha(t))$ with $\alpha'_{v_1}(t)=0$. By the Mean Value Theorem it follows that there exists $s \in [0,1]$ so that
\begin{align*}
\varphi\left(\frac{\alpha_{v_4}(s)}{\|\alpha_{v_4}(s)\|} \right)(\alpha'_{v_4}(s)) = \frac{d}{dt}\|\alpha_{v_4}(t)\||_{t=s} = \|q^2_{v_4}\| - \|q^1_{v_4}\|>0,
\end{align*}
thus $\alpha'(s)$ is not a flex of $(K_4,\alpha(s))$. As $\mathcal{F}(K_4, \alpha(s)) \subset \mathcal{F}(K_4-e, \alpha(s))$ then $(K_4, \alpha(s))$ is infinitesimally rigid as required.
\end{proof}

\section{Graph operations for the normed plane}\label{Graph extensions in the normed plane}

In this section we shall define a set of graph operations and prove that they preserve isostaticity in non-Euclidean normed planes. The Henneberg moves and the vertex split have also been shown to preserve isostaticity in the Euclidean normed plane and can even be generalised to higher dimensions \cite{comrig} \cite{vertexsplit}, however the vertex-to-$K_4$ extension is strictly a non-Euclidean normed plane graph operation as it will not preserve $(2,3)$-sparsity.

\subsection{0-extensions} \label{0-extensions}

\begin{figure}
\[
\begin{tikzpicture}
\draw (-5,0) ellipse (0.75cm and 0.4cm); 
\draw[fill] (-5.25,0.25) circle [radius=0.05];
\draw[fill] (-4.75,0.25) circle [radius=0.05];

\draw[->, very thick] (-3.7,0) -- (-3.3,0);

\draw (-2,0) ellipse (0.75cm and 0.4cm); 
\draw[fill] (-2.25,0.25) circle [radius=0.05];
\draw[fill] (-1.75,0.25) circle [radius=0.05];
\draw[fill] (-2,0.75) circle [radius=0.05];
\draw (-2.25,0.25) -- (-2,0.75) -- (-1.75,0.25);

\draw (2,0) ellipse (0.75cm and 0.4cm); 
\draw[fill] (2.25,0.25) circle [radius=0.05];
\draw[fill] (1.75,0.25) circle [radius=0.05];
\draw[fill] (2,0) circle [radius=0.05];

\draw (1.75,0.25) -- (2.25,0.25);

\draw[->, very thick] (3.3,0) -- (3.7,0);

\draw (5,0) ellipse (0.75cm and 0.4cm); 
\draw[fill] (5.25,0.25) circle [radius=0.05];
\draw[fill] (4.75,0.25) circle [radius=0.05];
\draw[fill] (5,0) circle [radius=0.05];
\draw[fill] (5,0.75) circle [radius=0.05];

\draw (4.75,0.25) -- (5,0.75) -- (5.25,0.25);
\draw (5,0.75) -- (5,0);

\end{tikzpicture}
\]
\caption{A $0$-extension (left) and a $1$-extension (right).}\label{graph0x}
\end{figure}
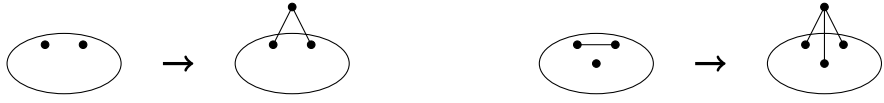

\begin{lemma}\label{zeroextopensmooth}
$0$-extensions preserve independence, dependence and isostaticity in any normed plane.
\end{lemma}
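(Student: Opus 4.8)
The plan is to treat independence and dependence together (they are complementary graph properties) and then deduce isostaticity by an edge count. Write $G'$ for the $0$-extension of $G$, obtained by adding a vertex $v_0$ joined to two distinct vertices $v_1,v_2\in V(G)$. The two implications ``$G'$ independent $\Rightarrow$ $G$ independent'' and ``$G$ dependent $\Rightarrow$ $G'$ dependent'' are immediate from the block structure of the rigidity matrix: a stress of a subframework extends to a stress of the larger framework by setting the coefficients of the new edges $v_0v_1,v_0v_2$ to $0$, and conversely restricting a stress of $(G',p')$ to $E(G)$ gives a stress of $(G,p'|_G)$. Hence if $(G',p')$ is independent then so is every restriction $(G,p'|_G)$, and if $G$ admits no independent well-positioned placement then neither does $G'$.

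The substantive direction is ``$G$ independent $\Rightarrow$ $G'$ independent''. First I would fix, using Lemma \ref{regopen3}, a placement $p\in\mathcal{R}(G)\cap\mathcal{G}(G)$; since $G$ is independent this regular placement has $\rank df_G(p)=|E(G)|$, so $(G,p)$ is independent, and since $p$ is in general position we have $p_{v_1}\neq p_{v_2}$. Write $c:=p_{v_2}-p_{v_1}\neq 0$. The goal is to choose $p_{v_0}$ so that the two new edges are well-positioned with \emph{linearly independent} support functionals $\varphi_{v_0,v_1},\varphi_{v_0,v_2}$. Granting this, the column of $R(G',p')$ indexed by $v_0$ has nonzero entries only in the rows $v_0v_1,v_0v_2$, so the stress condition at $v_0$ reads $\omega_{v_0v_1}\varphi_{v_0,v_1}+\omega_{v_0v_2}\varphi_{v_0,v_2}=0$; linear independence forces $\omega_{v_0v_1}=\omega_{v_0v_2}=0$, whereupon $\omega|_{E(G)}$ is a stress of the independent framework $(G,p)$ and hence vanishes. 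Thus $(G',p')$ has no nonzero stress and is independent by Proposition \ref{equivind}.

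To produce $p_{v_0}$ I would use Proposition \ref{abc} to pick $a,b\in S_1[0]\cap\smooth$ with $\varphi(a),\varphi(b)$ linearly independent; note this already forces $a,b$ to be linearly independent, since $\varphi$ is homogeneous. Because $\varphi$ is continuous (Proposition \ref{paper1}), linear independence of $\varphi(a),\varphi(b)$ is an open condition, and because $\smooth$ is dense (Proposition \ref{paper1}) I can perturb $a,b$ slightly within $\smooth$ so that, in addition, neither $a$ nor $b$ is parallel to $c$ --- one only needs to avoid the two directions $\pm c/\|c\|$. Then the lines $p_{v_1}+\mathbb{R}a$ and $p_{v_2}+\mathbb{R}b$ meet in a unique point $p_{v_0}$ (they are non-parallel), and $a,b\not\parallel c$ guarantee $p_{v_0}\neq p_{v_1}$ and $p_{v_0}\neq p_{v_2}$. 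Consequently $p_{v_0}-p_{v_1}$ is a nonzero multiple of $a$ and $p_{v_0}-p_{v_2}$ a nonzero multiple of $b$; both are smooth (scalar multiples of smooth points), so the new edges are well-positioned, and by homogeneity their support functionals equal $\pm\varphi(a)$ and $\pm\varphi(b)$, which are linearly independent as required.

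Finally, for isostaticity I would combine the two preservation statements with the edge count. Since $|V(G')|=|V(G)|+1$ and $|E(G')|=|E(G)|+2$, the quantity $|E|-(2|V|-k)$ is unchanged by the move, where $k=\dim\Iso(X)$. Hence if $G$ is isostatic then $G'$ is independent (by the above) and satisfies $|E(G')|=2|V(G')|-k$, so $G'$ is isostatic by Corollary \ref{isostaticgraphs}; the converse is identical. I expect the main obstacle to be exactly the construction of the third paragraph: in a plane that is not strictly convex, two nearby edge directions may share a support functional, so one cannot simply perturb $p_{v_0}$ and invoke a measure-zero argument for the independence of $\varphi_{v_0,v_1},\varphi_{v_0,v_2}$. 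Proposition \ref{abc} is precisely what rescues the argument, and the only remaining care is to keep $p_{v_0}$ away from $p_{v_1}$ and $p_{v_2}$, which the openness-plus-density perturbation handles.
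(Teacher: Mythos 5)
Your proposal is correct and follows essentially the same route as the paper's proof: a regular general-position placement of $G$ via Lemma \ref{regopen3}, Proposition \ref{abc} to obtain two smooth unit vectors with linearly independent support functionals, placement of $v_0$ at the intersection of the two lines through $p_{v_1}$ and $p_{v_2}$, the stress argument at $v_0$ forcing the new coefficients to vanish, extension of stresses by zero for the dependence direction, and the edge count together with Corollary \ref{isostaticgraphs} for isostaticity. Two minor remarks: your parenthetical claim that restricting a stress of $(G',p')$ to $E(G)$ always yields a stress of $(G,p'|_G)$ is false in general (the stress conditions at $v_1,v_2$ can fail when the support functionals of the new edges are linearly dependent), but this is harmless since the extension-by-zero fact alone gives both implications; and your perturbation of $a,b$ away from the direction of $p_{v_2}-p_{v_1}$ is a worthwhile refinement, since it rigorously justifies that the intersection point differs from $p_{v_1}$ and $p_{v_2}$, a claim the paper asserts without argument.
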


\begin{proof}
Let $G$ be an independent graph in a normed plane $X$. Since we can only apply $0$-extensions to graphs with at least two vertices we may assume that $|V(G)| \geq 2$ and define $v_1, v_2 \in V(G)$ to be the vertices where we are applying the $0$-extension. By Lemma \ref{regopen3} we may choose $p \in \mathcal{R}(G) \cap \mathcal{G}(G)$. Let $G'$ be the $0$-extension of $G$ at $v_1$, $v_2$ with added vertex $v_0$. By Proposition \ref{abc} we may choose linearly independent $y_1, y_2 \in \smooth$ such that $\|y_1\|=\|y_2\|=1$ and $\varphi(y_1), \varphi(y_2) \in X^*$ are linearly independent. Define for $i=1,2$ the lines
\begin{align*}
L_i := \{ p_{v_i} + ty_i : t \in \mathbb{R} \},
\end{align*}
then since $p_{v_1} \neq p_{v_2}$ (as $p$ is in general position) and $y_1,y_2$ are linearly independent then there exists a unique point $z \in L_1 \cap L_2$ and $z \neq p_{v_i}$ for $i=1,2$. Define $p'$ to be the placement of $G'$ that agrees with $p$ on $V(G)$ with $p'_{v_0} = z$.  We recall that $\varphi'_{v,w}$ be the support functional $vw \in E(G')$; it is immediate that if $vw \in E(G)$ then $\varphi'_{v,w}= \varphi_{v,w}$. By possibly multiplying $y_i$ by $-1$ we may assume that $\varphi'_{v_0,v_i} = \varphi(y_i)$ for $i=1,2$.

Choose any stress $a = (a_{vw})_{vw \in E(G')}$ of $(G',p')$, then by observing the stress condition at $v_0$ we note that
\begin{align*}
0 = a_{v_0 v_1} \varphi'_{v_0 , v_1} + a_{v_0 v_2} \varphi'_{v_0, v_2} = a_{v_0 v_1} \varphi(y_1) + a_{v_0 v_2} \varphi(y_2).
\end{align*}
Since $\varphi(y_1), \varphi(y_2)$ are linearly independent then $a_{v_0 v_i}=0$ for $i=1,2$ and $a|_{G}$ is a stress of $(G,p)$. It now follows that there exists a non-zero stress of $(G',p')$ if and only if there exists a non-zero stress of $(G,p)$. By Proposition \ref{equivind} we have that $(G',p')$ is independent if and only if $(G,p)$ is independent, thus $G'$ is independent if $G$ is independent. As $G \subset G'$ then $G$ is independent if $G'$ is independent; this implies that $G'$ is dependent if $G$ is dependent. As $G$ was chosen arbitrarily then it follows that $0$-extensions preserve independence and dependence. 

By Proposition \ref{Henneberg} and Proposition \ref{nonHenneberg}, $(2,k)$-tightness is preserved by $0$-extensions (for $k=2,3$), thus it follows from Corollary \ref{isostaticgraphs} that isostaticity is also preserved.
\end{proof}

\subsection{1-extensions} \label{1-extensions}

\begin{lemma}\label{oneext}
$1$-extensions preserve independence and isostaticity in any normed plane.
\end{lemma}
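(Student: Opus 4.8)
The plan is to follow the template of Lemma \ref{zeroextopensmooth}: first produce a single independent well-positioned placement of the $1$-extension $G'$, and then read off isostaticity from the edge count together with Corollary \ref{isostaticgraphs}. Write $G'$ for the graph obtained from an independent graph $G$ by deleting an edge $v_1v_2$ and adjoining a new vertex $v_0$ adjacent to $v_1$, $v_2$ and a third vertex $v_3$. By Lemma \ref{regopen3} I would fix $p\in\mathcal{R}(G)\cap\mathcal{G}(G)$, so $(G,p)$ is independent and well-positioned and $p_{v_1},p_{v_2},p_{v_3}$ are affinely independent. Set $u:=\frac{p_{v_1}-p_{v_2}}{\|p_{v_1}-p_{v_2}\|}$; this is a smooth point because $v_1v_2\in E(G)$. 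The construction places $v_0$ on the line $\ell$ through $p_{v_1}$ and $p_{v_2}$, so that both new edges $v_0v_1$ and $v_0v_2$ point along $\pm u$ and hence carry the support functional $\pm\varphi(u)$.

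The crux — and the one feature with no Euclidean counterpart — is to choose the position of $v_0$ on $\ell$ so that the third new edge satisfies $\varphi'_{v_0,v_3}\not\parallel\varphi(u)$ while remaining well-positioned. As $v_0$ ranges over $\ell\setminus\{p_{v_1},p_{v_2}\}$, the vector $p'_{v_0}-p_{v_3}$ traces a line missing the origin (by affine independence of $p_{v_1},p_{v_2},p_{v_3}$), so its directions sweep an open semicircle $A\subseteq S_1[0]$ whose two excluded endpoints are $\pm u$. A smooth direction $x\in S_1[0]$ has $\varphi(x)\parallel\varphi(u)$ precisely when $\varphi(u)(x)=\pm1$, that is when $x\in F:=\varphi(u)^{-1}[\{1\}]\cap S_1[0]$ or $x\in -F$; by Lemma \ref{notstrictlemma1} the set $F$ is a closed convex arc, and it contains no antipodal pair (since $\varphi(u)(x)=\varphi(u)(-x)=1$ is impossible), so $F$ and $-F$ are disjoint closed subsets of $S_1[0]$. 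If the connected set $A$ were contained in $F\cup(-F)$ it would lie entirely in one of them, say $A\subseteq F$; but then the endpoint $-u$ of $\overline{A}$ would lie in $F$, contradicting $\varphi(u)(-u)=-1$ and $F\cap(-F)=\emptyset$. Hence $A\setminus(F\cup(-F))$ is a nonempty open subset of $S_1[0]$, and using Lemma \ref{notstrictlemma6} on the line $\ell-p_{v_3}$ I would pick a smooth direction inside it, yielding a position $p'_{v_0}$ with $v_0v_3$ well-positioned and $\varphi'_{v_0,v_3}\not\parallel\varphi(u)$. (In the strictly convex case this step is immediate from part \ref{stritem2} of Proposition \ref{str}.)

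With this well-positioned placement $p'$ of $G'$ in hand I would run a stress transfer. Given a stress $a$ of $(G',p')$, the stress condition at $v_0$ has the form $\alpha\,\varphi(u)+a_{v_0v_3}\varphi'_{v_0,v_3}=0$ for a scalar $\alpha$ built from $a_{v_0v_1},a_{v_0v_2}$; since $\varphi(u)$ and $\varphi'_{v_0,v_3}$ are independent this forces $a_{v_0v_3}=0$ together with a single linear relation between $a_{v_0v_1}$ and $a_{v_0v_2}$. That relation is exactly the compatibility needed so that, after deleting $v_0$, the contributions of $v_0v_1$ and $v_0v_2$ at $v_1$ and at $v_2$ agree with a single scalar multiple of $\varphi(u)$ carried by the absent edge $v_1v_2$. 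Defining $b_e=a_e$ for $e\neq v_1v_2$ and $b_{v_1v_2}$ equal to this common coupling scalar then produces a stress of $(G,p)$ (the conditions at $v_1,v_2$ match by the above, the condition at $v_3$ because $a_{v_0v_3}=0$, and the remaining conditions trivially). As $(G,p)$ is independent, $b=0$, whence $a=0$; by Proposition \ref{equivind}, $(G',p')$ is independent, so $G'$ is independent in $X$.

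Finally, isostaticity follows as in Lemma \ref{zeroextopensmooth}: a $1$-extension is a Henneberg move, so by Proposition \ref{Henneberg} (for $k=3$) and Proposition \ref{nonHenneberg} (for $k=2$) it preserves $(2,k)$-tightness. Thus if $G$ is isostatic — hence $(2,k)$-tight by Theorem \ref{papernecessary2} — then $G'$ is $(2,k)$-tight and independent, and Corollary \ref{isostaticgraphs} gives that $G'$ is isostatic. The main obstacle throughout is the middle step: in a non-strictly-convex plane one must guarantee that the extra edge's support functional is independent of the deleted edge's, which is exactly what the observation that a face of the unit ball contains no antipodal pair secures.
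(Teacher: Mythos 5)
Your proof is correct, and its skeleton is the paper's own: place the new vertex $v_0$ on the line through $p_{v_1}$ and $p_{v_2}$ so that the edges $v_0v_1$, $v_0v_2$ and the deleted edge $v_1v_2$ all carry support functionals $\pm\varphi(u)$, force $a_{v_0v_3}=0$ and a single coupling relation from the stress condition at $v_0$, transfer that coupling scalar to $b_{v_1v_2}$ to obtain a stress of $(G,p)$, and finish with Proposition \ref{Henneberg}, Proposition \ref{nonHenneberg} and Corollary \ref{isostaticgraphs}. Where you genuinely diverge is the mechanism certifying that a good position for $v_0$ exists. The paper picks the direction of the third edge first: Proposition \ref{abc} (whose proof rests on the L\"{o}wner ellipsoid) gives a smooth $y$ with $\varphi(y)$ linearly independent of $\varphi_{v_1,v_2}$, and $v_0$ is placed where the line through $p_{v_3}$ in direction $y$ meets the line through $p_{v_1},p_{v_2}$, with a small perturbation of $y$ if that intersection lands on $p_{v_1}$ or $p_{v_2}$. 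You instead sweep $v_0$ along the line and rule out the possibility that every position is bad: the bad directions form $F\cup(-F)$ with $F=\varphi(u)^{-1}[\{1\}]\cap S_1[0]$ closed and convex by Lemma \ref{notstrictlemma1}, the sets $F$ and $-F$ are disjoint, the swept arc is connected with both $u$ and $-u$ in its closure, and $F$ contains only one of these two points while $-F$ contains only the other, so the arc lies in neither; Lemma \ref{notstrictlemma6} then supplies a smooth point in the nonempty open set of good positions. Your route avoids Proposition \ref{abc} entirely and stays within the elementary facts of Section \ref{The rigidity of K4 in not strictly convex normed planes} (both lemmas you cite are stated for arbitrary normed planes, not just non-strictly-convex ones), at the cost of a slightly longer connectedness argument; the paper's route is shorter but leans on the heavier ellipsoid machinery. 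One small patch: your chosen position must also avoid the points $p_{v_1}$ and $p_{v_2}$ themselves, since otherwise $v_0v_1$ or $v_0v_2$ has zero edge vector and $(G',p')$ is not well-positioned; this costs nothing (deleting two points from a nonempty open subset of the line leaves a nonempty open set in which Lemma \ref{notstrictlemma6} still finds a smooth point) and is exactly the role played by the perturbation step in the paper's proof.
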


\begin{proof}
Let $G$ be independent, then as $1$-extensions require 3 vertices we may assume $|V(G)| \geq 3$. We shall suppose $G'$ is a $1$-extension of $G$ that involves deleting the edge $v_1v_2 \in E(G)$ and adding a vertex $v_0$ connected to the end points and some other distinct vertex $v_3 \in V(G)$. By Lemma \ref{regopen3} it follows that there exists a regular (and thus independent) placement $p$ of $G$ in general position. 

By Proposition \ref{abc} there exists $y \in \smooth$, $\|y\|=1$, such that $y , p_{v_1}-p_{v_2}$ are linearly independent and $\varphi(y), \varphi_{v_1,v_2}$ are linearly independent. We note that as $y , p_{v_1}-p_{v_2}$ are linearly independent and $p_{v_1}$, $p_{v_2}$, $p_{v_3}$ are not colinear (since $(G,p)$ is in general position) then the line through $p_{v_1}, p_{v_2}$ and the line through $p_{v_3}$ in the direction $y$ must intersect uniquely at some point $z \neq p_{v_3}$. By parts \ref{paper1item0} and \ref{paper1item2} of Proposition \ref{paper1}, if $z = p_{v_i}$ for some $i=1,2$ we may perturb $y$ to some sufficiently close $y' \in \smooth$ such that the pairs $y' , p_{v_1}-p_{v_2}$ and $\varphi(y'), \varphi_{v_1,v_2}$ are linearly independent and our new intersection point $z'$ is not equal to $p_{v_i}$ for $i=1,2$; we will now assume $y$ is chosen so that this holds. 

Define $p'$ to be the placement of $G'$ where $p'_v = p_v$ for all $v \in V(G)$ and $p'_{v_0} = z$. We recall that $\varphi'_{v,w}$ be the support functional $vw \in E(G')$; it is immediate that if $vw \in E(G) \setminus \{v_1 v_2 \}$ then $\varphi'_{v,w}= \varphi_{v,w}$. We note that $\varphi'_{v_1,v_0}$, $\varphi'_{v_0, v_2}$, $\varphi'_{v_1,v_2}$ are all pairwise linearly dependent, thus there exists $f \in S^*_1[0]$ and $\sigma_{v_i, v_j} \in \{-1,1\}$ such that $\varphi'_{v_i, v_j} = \sigma_{v_i, v_j} f$ for distinct $i,j \in \{0,1,2\}$, with $\sigma_{v_j, v_i} = - \sigma_{v_i, v_j}$. We further note that, due to our choice placement, at least one of $\varphi'_{v_1,v_0}$, $\varphi'_{v_0, v_2}$ must be equal to $\varphi'_{v_1,v_2}$; we may assume by our ordering of $v_1,v_2$ and choice of $f$ that $\sigma_{v_1,v_0} = \sigma_{v_1,v_2}= 1$. We may also assume we chose $y$ such that $\varphi'_{v_0,v_3} = \varphi(y)$ and note that $\varphi(y)$ is linearly independent of $f$ by our choice of $z$.

Choose any stress $a := (a_{vw})_{vw \in E(G')}$ of $(G',p')$. If we observe $a$ at $v_0$ we note  
\begin{align*}
 a_{v_0 v_1} \varphi'_{v_0, v_1} + a_{v_0 v_2} \varphi'_{v_0, v_2} + a_{v_0 v_3} \varphi'_{v_0, v_3} = ( \sigma_{v_0,v_2}a_{v_0v_2} - a_{v_0 v_1})f + a_{v_0 v_3} \varphi(y) =0,
\end{align*}
thus since $f, \varphi(y)$ are linearly independent, $a_{v_0 v_3} = 0$ and $\sigma_{v_0,v_2}a_{v_0 v_1} = a_{v_0v_2}$. Define $b := (b_{vw})_{vw \in E(G)}$ where $b_{vw} = a_{vw}$ for all $vw \in E(G) \setminus \{v_1 v_2\}$ and $b_{v_1 v_2} = a_{v_0 v_1} = \sigma_{v_0,v_2} a_{v_0 v_2}$. For each $v \in V(G) \setminus \{v_1,v_2\}$ it is immediate that
\begin{align*}
\sum_{w \in N_{G}(v)} b_{v w} \varphi_{v, w} = \sum_{w \in N_{G'}(v)} a_{v w} \varphi'_{v, w} =0;
\end{align*}
we note that this will also hold for $v_3$ as $a_{v_0 v_3} = 0$. If we observe whether the stress condition of $b$ holds at $v_1$ we note
\begin{align*}
\sum_{w \in N_{G}(v_1)} b_{v w} \varphi_{v, w} = b_{v_1 v_2} f + \sum_{\substack{w \in N_{G}(v_1) \\ w \neq  v_2}} b_{v w} \varphi_{v, w}= a_{v_0 v_1} \varphi'_{v_1,v_0} + \sum_{\substack{w \in N_{G'}(v_1) \\ w \neq  v_0}} a_{v w} \varphi'_{v, w} = 0,
\end{align*}
while if we observe whether the stress condition of $b$ holds at $v_2$ we note
\begin{align*}
\sum_{w \in N_{G}(v_2)} b_{v w} \varphi_{v, w} = - b_{v_1 v_2} f + \sum_{\substack{w \in N_{G}(v_2) \\ w \neq  v_1}} b_{v w} \varphi_{v, w} = a_{v_0 v_2} \varphi'_{v_2,v_0} + \sum_{\substack{w \in N_{G'}(v_2) \\ w \neq  v_0}} a_{v w} \varphi'_{v, w} = 0,
\end{align*}
thus $b$ is a stress of $(G,p)$. Since $(G,p)$ is independent then $b=0$ which in turn implies $a=0$. As $a$ was chosen arbitrarily then $(G',p')$ is independent; it follows then that $1$-extensions preserve independence.

By Proposition \ref{Henneberg} and Proposition \ref{nonHenneberg}, $(2,k)$-tightness is preserved by $1$-extensions (for $k=2,3$), thus it follows from Corollary \ref{isostaticgraphs} that isostaticity is also preserved.
\end{proof}

\subsection{Vertex splitting}\label{Edge-to-K3 extensions}

A vertex split is given by the following process applied to any graph $G$ (see Figure \ref{graph3}):
\begin{enumerate}
\item Choose an edge $v_0w_0 \in E(G)$, 

\item Add a new vertex $w_0'$ to $V(G)$ and edges $v_0 w_0', w_0 w_0'$ to $E(G)$,

\item For every edge $v w_0 \in E(G)$ we may either leave it or replace it with $v w_0'$.
\end{enumerate}

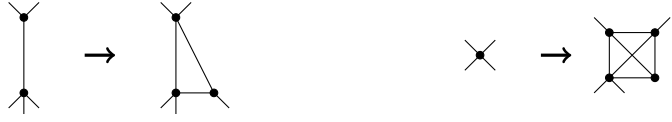
\begin{figure}
\[
\begin{tikzpicture}
\draw[fill] (-4,1) circle [radius=0.05];
\draw[fill] (-4,0) circle [radius=0.05];

\draw (-4,1) -- (-4.2,1.2); 
\draw (-4,1) -- (-3.8,1.2);

\draw (-4,0) -- (-4,1); 

\draw (-4,0) -- (-4.2,-0.2); 
\draw (-4,0) -- (-4,-0.28); 
\draw (-4,0) -- (-3.8,-0.2);

\draw[->, very thick] (-3.2,0.5) -- (-2.8,0.5); 

\draw[fill] (-2,1) circle [radius=0.05];
\draw[fill] (-2,0) circle [radius=0.05];
\draw[fill] (-1.5,0) circle [radius=0.05];

\draw (-2,1) -- (-2.2,1.2); 
\draw (-2,1) -- (-1.8,1.2);

\draw (-2,0) -- (-2,1); 
\draw (-1.5,0) -- (-2,0); 
\draw (-1.5,0) -- (-2,1); 

\draw (-2,0) -- (-2.2,-0.2); 
\draw (-2,0) -- (-2,-0.28); 

\draw (-1.5,0) -- (-1.3,-0.2);

\draw[fill] (2,0.5) circle [radius=0.05];

\draw (2,0.5) -- (2.2,0.7); 
\draw (2,0.5) -- (2.2,0.3); 
\draw (2,0.5) -- (1.8,0.7); 
\draw (2,0.5) -- (1.8,0.3);

\draw[->, very thick] (2.8,0.5) -- (3.2,0.5);

\draw[fill] (4.3,0.8) circle [radius=0.05];
\draw[fill] (4.3,0.2) circle [radius=0.05];
\draw[fill] (3.7,0.8) circle [radius=0.05];
\draw[fill] (3.7,0.2) circle [radius=0.05];

\draw (4.3,0.8) -- (4.3,0.2) -- (3.7,0.8) -- (3.7,0.2);
\draw (3.7,0.8) -- (4.3,0.8) -- (3.7,0.2) -- (4.3,0.2);

\draw (3.7,0.2) -- (3.5,0); 
\draw (3.7,0.2) -- (3.9,0); 
\draw (3.7,0.8) -- (3.5,1); 
\draw (4.3,0.8) -- (4.5,1);

\end{tikzpicture}
\]
\caption{A vertex split (left) and a vertex-to-$K_4$ extension (right).}\label{graph3}
\end{figure}

\begin{lemma}\label{edgemove}
Vertex splitting preserves independence and isostaticity in any normed plane.
\end{lemma}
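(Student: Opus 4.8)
The plan is to show that vertex splitting preserves independence; isostaticity then follows immediately, since Proposition~\ref{Henneberg} and Proposition~\ref{nonHenneberg} guarantee that the move preserves $(2,k)$-tightness, so Corollary~\ref{isostaticgraphs} upgrades independence to isostaticity. Write the split concretely: we split the edge $v_0 w_0$, introduce a new vertex $w_0'$ together with edges $v_0 w_0'$ and $w_0 w_0'$, and partition the remaining neighbours $N_G(w_0)\setminus\{v_0\}$ into a set $W_1$ left at $w_0$ and a set $W_2$ transferred to $w_0'$. Assuming $G$ is independent, I would use Lemma~\ref{regopen3} to pick a regular general-position placement $p$ of $G$; then $(G,p)$ is independent and well-positioned, and $f:=\varphi_{v_0,w_0}$ is defined.

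Rather than place $w_0'$ at a finite position, I would realise $G'$ as a framework limit. Let $p^\ast$ be the placement of $G'$ with $p^\ast_v=p_v$ for $v\in V(G)$ and $p^\ast_{w_0'}=p_{w_0}$, so that every edge of $G'$ is well-positioned except the collapsed edge $w_0 w_0'$. For the approximating sequence set $p^n_v=p_v$ and $p^n_{w_0'}=p_{w_0}+\tfrac1n u$, where $u\in\smooth$ is chosen, via Proposition~\ref{abc} applied to $(p_{v_0}-p_{w_0})/\|p_{v_0}-p_{w_0}\|$, so that $g:=\varphi(u)$ and $f$ are linearly independent. For large $n$ all edges of $(G',p^n)$ are well-positioned, the support functionals of $v_0 w_0'$ and of the edges $vw_0'$ with $v\in W_2$ converge by continuity of $\varphi$ to $f$ and to $\varphi_{v,w_0}$ respectively (their limiting directions $p_{v_0}-p_{w_0}$ and $p_v-p_{w_0}$ are smooth, being directions of edges of $(G,p)$), while $w_0 w_0'$ keeps the constant direction $-u$. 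Taking $\varphi_{w_0,w_0'}:=\varphi(-u)=-g$ as pseudo-support functional, we have $(G',p^n)\rightarrow(G',p^\ast)^\phi$, so by Proposition~\ref{framelimit} it suffices to prove that $R(G',p^\ast)^\phi$ has independent rows.

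To establish this I would transfer pseudo-stresses to genuine stresses of $(G,p)$. Given a pseudo-stress $a$ of $(G',p^\ast)^\phi$, define $b$ on $E(G)$ by $b_{w_0 v}=a_{w_0 v}$ for $v\in W_1$, $b_{w_0 v}=a_{w_0' v}$ for $v\in W_2$, $b_{v_0 w_0}=a_{v_0 w_0}+a_{v_0 w_0'}$, and $b_{vw}=a_{vw}$ on all remaining edges. Using $\varphi^\ast_{v_0,w_0}=\varphi^\ast_{v_0,w_0'}=f$ and $\varphi^\ast_{v,w_0'}=\varphi_{v,w_0}$, the stress conditions of $a$ at $w_0$ and at $w_0'$ add up to exactly the stress condition of $b$ at $w_0$ (the two $\pm a_{w_0 w_0'}g$ contributions cancel), the conditions at $v_0$ match after merging the two pivot edges, and every other vertex is untouched; hence $b$ is a stress of $(G,p)$ and vanishes by independence. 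Back-substituting $b=0$ forces $a_{w_0 v}=0$ for $v\in W_1$, $a_{w_0' v}=0$ for $v\in W_2$, and $a_{v_0 w_0}+a_{v_0 w_0'}=0$, after which the conditions at $w_0$ and $w_0'$ collapse to $-a_{v_0 w_0}f+a_{w_0 w_0'}g=0$ and $-a_{v_0 w_0'}f-a_{w_0 w_0'}g=0$; linear independence of $f$ and $g$ then yields $a_{v_0 w_0}=a_{v_0 w_0'}=a_{w_0 w_0'}=0$, so $a=0$ and $G'$ is independent.

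The main obstacle is exactly this last step. The three pivot edges $v_0 w_0$, $v_0 w_0'$, $w_0 w_0'$ form a triangle, and if $w_0'$ were placed collinearly with $v_0$ and $w_0$ then all three of their support functionals would be parallel, producing a genuine self-stress on the triangle and destroying independence. Collapsing $w_0'$ onto $w_0$ forces $v_0 w_0$ and $v_0 w_0'$ to share the functional $f$, so the only remaining freedom for breaking the triangle stress lies in the direction of $w_0 w_0'$; choosing the approach direction $u$ with $g=\varphi(u)$ independent of $f$ is precisely what resolves the degeneracy, and it is the one genuinely geometric choice in the argument, the rest being the combinatorial bookkeeping of the transfer. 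Once $G'$ is independent, Proposition~\ref{Henneberg}, Proposition~\ref{nonHenneberg} and Corollary~\ref{isostaticgraphs} give that vertex splitting preserves isostaticity as well.
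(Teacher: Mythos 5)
Your overall strategy is the same as the paper's: collapse $w_0'$ onto $w_0$, give the collapsed edge a pseudo-support functional $g$ linearly independent of $f=\varphi_{v_0,w_0}$, transfer pseudo-stresses of the collapsed framework to stresses of $(G,p)$, and pass to a framework limit via Proposition \ref{framelimit}; your stress bookkeeping is correct and agrees with the paper's. However, there is a genuine gap in the approximation step. You claim that for large $n$ all edges of $(G',p^n)$, where $p^n_{w_0'}=p_{w_0}+\tfrac1n u$ and every other vertex is left at $p$, are well-positioned, justifying this by continuity together with smoothness of the limiting directions. This does not follow: $\smooth$ is dense with measure-zero complement (Proposition \ref{paper1}), but it need not be \emph{open}, so a direction such as $p_v-p_{w_0}-\tfrac1n u$ (for $v\in W_2\cup\{v_0\}$) can fail to be smooth for every $n$ even though its limit $p_v-p_{w_0}$ is smooth. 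There are normed planes in which the non-smooth directions are dense (the non-smooth points of the unit circle are countable but can be dense, even for strictly convex norms), and in such a plane your sequence may contain no well-positioned frameworks at all; Proposition \ref{framelimit}, whose hypothesis requires the approximating frameworks to be well-positioned, then cannot be invoked.

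The repair is exactly what the paper does: instead of insisting that the vertices of $V(G)$ stay at $p$, apply Lemma \ref{wellpos2} with $V=\{w_0,w_0'\}$ to each placement $q^n$ (your $p^n$), obtaining a well-positioned placement within $\tfrac1n$ of $q^n$ that keeps $w_0$ and $w_0'$ fixed. Fixing these two vertices keeps the direction of the collapsed edge exactly equal to $u$, so its support functional equals the chosen pseudo-support functional for every $n$, while all remaining support functionals converge by continuity of $\varphi$. (Any alternative repair along your lines would need an extra argument, e.g.\ Lemma \ref{notstrictlemma6} plus a Baire-category argument on the lines $t\mapsto p_v-p_{w_0}-tu$, to choose parameters $t_n\downarrow 0$ making all edges at $w_0'$ simultaneously well-positioned; density of smooth points on each line separately is not enough for a simultaneous choice.) With this one change the rest of your argument, including the deduction of isostaticity from Proposition \ref{nonHenneberg} and Corollary \ref{isostaticgraphs}, goes through and coincides with the paper's proof.
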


\begin{proof}
Let $G$ be isostatic, then we may assume that $|V(G)| \geq 3$ and $|E(G)| \geq 1$, as if $|V(G)| =1$ or $|E(G)|=0$ we can't apply a vertex split and if $|V(G)|=2$ we are just applying a $0$-extension. By Lemma \ref{regopen3} we may choose $p$ to be a regular placement of $G$ in general position. Define $G'$ to be graph formed from $G$ by applying a vertex split to $w \in V(G)$ and $v_0 w_0 \in E(G)$ which adds $w'$. We shall define $p'$ to be the not well-positioned placement of $G'$ with $p'_{w_0'}= p'_{w_0} = p_{w_0}$ and $p'_v=p_v$ for all $v \in V(G') \setminus \{w'_0\}$. By Proposition \ref{abc}, we may choose smooth $x \in S_1[0]$ such that $\|x\|=1$, the pair $x, p_{v_0}-p_{w_0}$ are linearly independent, and the pair $\varphi(x)$, $\varphi_{v_0,w_0}$ are linearly independent. We shall define the pseudo-support functional $\varphi'_{w_0,w_0'} := \varphi(x)$ and thus define $(G',p')^\phi$ with $\phi := \{\varphi'_{v,w}: vw \in E(G') \}$.

Let $a := (a_{vw})_{vw \in E(G')}$ be a pseudo-stress of $(G',p')^\phi$. Define $b := (b_{vw})_{vw \in E(G)}$ with $b_{v_0 w_0} = a_{v_0 w_0} + a_{v_0 w_0'}$, $b_{v w_0'}= a_{v w_0}$ if $v \neq v_0$ and $b_{vw} = a_{vw}$ for all other edges of $G$. We shall now show $b$ is a stress of $(G,p)$. We first note that for any $v \in V(G) \setminus \{v_0, w_0\}$ the stress condition of $b$ at $v$ holds as the pseudo-stress of $a$ holds at $v$, and the stress condition of $b$ at $v_0$ holds as 
\begin{align*}
b_{v_0 w_0} \varphi_{v_0,w_0} = a_{v_0 w_0} \varphi_{v_0,w_0} + a_{v_0 w'_0} \varphi'_{v_0,w'_0};
\end{align*}
further, if we observe the stress condition of $b$ at $w_0$ we note
\begin{align*}
\sum_{v \in N_G(w_0)} b_{w_0 v} \varphi_{w_0, v} = \sum_{v \in N_{G'}(w_0) } a_{w_0 v} \varphi_{w_0, v}' + \sum_{v \in N_{G'}(w_0') } a_{w_0' v} \varphi_{w_0', v}' =0 + 0 =0,
\end{align*}
thus $b$ is a stress of $(G,p)$. As $(G,p)$ is independent then $b=0$, thus $a_{vw} = 0$ for all edges $vw \neq w_0 w'_0 , v_0 w_0, v_0 w'_0$ of $G'$, and $a_{v_0 w_0} + a_{v_0 w'_0} =0$. We note by observing the pseudo-stress condition of $a$ at $w_0$,
\begin{align*}
0 = \sum_{v \in N_{G'}(w_0)} a_{w_0 v} \varphi'_{w_0, v} = a_{w_0 w_0'} \varphi'_{w_0, w_0'} + a_{v_0 w_0} \varphi'_{w_0, v_0} = a_{w_0 w_0'} \varphi(x) + a_{v_0 w_0} \varphi_{w_0, v_0},
\end{align*}
thus $a_{v_0 w_0} = a_{w_0 w'_0} =0$; similarly, by observing the pseudo-stress condition of $a$ we note $a_{v_0 w'_0} = 0$. It now follows $a=0$, thus $R(G',p')^\phi$ has row independence.
%
%
%

Define $q^n \in X^{V(G')}$ to be the placement of $G'$ that agrees with $p'$ on $V(G)$ with $q^n_{w'_0} = p'_{w_0} -\frac{1}{n} x$. By Lemma \ref{wellpos2} we may choose $p^n \in \mathcal{W}(G)$ such that $\|p^n -q^n \|_{V(G)} < \frac{1}{n}$, $p^n_{w_0}= q^n_{w_0}$ and $p^n_{w'_0}= q^n_{w'_0}$. By our choice of $p^n$ we have that $\varphi^n_{w_0,w'_0} \rightarrow \varphi'_{w_0,w'_0}$ as $n \rightarrow \infty$, and by part \ref{paper1item2} of Proposition \ref{paper1}, $\varphi^n_{v,w} \rightarrow \varphi'_{v,w}$ as $n \rightarrow \infty$ for all $vw \in E(G') \setminus \{w_0w_0'\}$. This implies $(G',p^n) \rightarrow (G',p')^\phi$ as $n \rightarrow \infty$ and so by Proposition \ref{framelimit} we thus have that $G'$ is independent also. 

Suppose $G$ is isostatic, then by Corollary \ref{isostaticgraphs}, $G$ is $(2,k)$-tight for $k=2$ if $X$ non-Euclidean and $k=3$ if $X$ is Euclidean. By Proposition \ref{nonHenneberg}, $G'$ is $(2,k)$-tight, thus by Corollary \ref{isostaticgraphs}, $G'$ isostatic as required.
\end{proof}

\subsection{Vertex-to-$K_4$ extensions}\label{Vertex-to-K4 extensions}

The vertex-to-$K_4$ extension is given by the following process applied to any graph $G$ (see Figure \ref{graph3}):
\begin{enumerate}
\item Choose a vertex $v_0 \in V(G)$, 

\item Add the vertices $v_1,v_2, v_3, v_4$ to $V(G)$ and edges $v_i v_j$ to $E(G)$, $1 \leq i < j \leq 4$,

\item Delete $v_0$ and replace any edge $v_0 w \in E(G)$ with $v_i w$ for some $i=1,2,3,4$.
\end{enumerate}

\begin{lemma}\label{vertexmove}
Vertex-to-$K_4$ moves preserve isostaticity in any non-Euclidean normed plane.
\end{lemma}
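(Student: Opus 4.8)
The plan is to reduce the claim to showing that $G'$ is \emph{independent} in $X$. Since vertex-to-$K_4$ extensions preserve $(2,2)$-tightness by Proposition \ref{nonHenneberg}, and $G$ is $(2,2)$-tight by Corollary \ref{isostaticgraphs} (here $k=2$ as $X$ is non-Euclidean), once $G'$ is shown to be independent it will follow from Corollary \ref{isostaticgraphs} that $G'$ is isostatic. To establish independence I would mimic the proof of Lemma \ref{edgemove}, using the pseudo-rigidity matrix and framework-limit machinery of Section \ref{Approximation by well-positioned frameworks}. Fix notation: let $v_0$ be the replaced vertex, $v_1,v_2,v_3,v_4$ the new vertices spanning a $K_4$, and for each edge $v_0 w \in E(G)$ let $v_{i(w)} w$ be its replacement in $G'$. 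Choose a regular placement $p \in \mathcal{R}(G) \cap \mathcal{G}(G)$ by Lemma \ref{regopen3}; it is independent and well-positioned. By Theorem \ref{keytheorem} together with Corollary \ref{isostaticgraphs}, the graph $K_4$ admits an isostatic, hence independent, placement $q$ in $X$.

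I would then define the not-well-positioned limit placement $p'$ of $G'$ by collapsing the new vertices onto $v_0$: set $p'_{v_i} := p_{v_0}$ for $i=1,2,3,4$ and $p'_w := p_w$ for every vertex $w \in V(G) \setminus \{v_0\}$. Every external edge $v_i w$ and $w w'$ remains well-positioned, with $\varphi'_{v_i,w} = \varphi_{v_0,w}$ (since $p'_{v_i}-p'_w = p_{v_0}-p_w$), while the six internal edges $v_i v_j$ are not. For these I assign the pseudo-support functionals $\varphi'_{v_i,v_j} := \varphi\!\left( (q_{v_i}-q_{v_j})/\|q_{v_i}-q_{v_j}\| \right)$, namely the support functionals of $(K_4,q)$, obtaining $(G',p')^\phi$.

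The core step is that $R(G',p')^\phi$ has row independence. Let $a=(a_{vw})$ be a pseudo-stress, and define $b \in \mathbb{R}^{E(G)}$ by $b_{vw}=a_{vw}$ on external edges and $b_{v_0 w}=a_{v_{i(w)} w}$ on edges at $v_0$. Because $p'_{v_i}=p_{v_0}$, the pseudo-stress conditions of $a$ at the external vertices $w \neq v_0$ coincide with the stress conditions of $b$ there. At $v_0$ the stress condition of $b$ reads $\sum_i \sum_w a_{v_i w}\varphi'_{v_i,w}$, and substituting the pseudo-stress condition at each $v_i$ rewrites this as $-\sum_i \sum_{j \neq i} a_{v_i v_j}\varphi'_{v_i,v_j}$, which vanishes since $a_{v_i v_j}=a_{v_j v_i}$ while $\varphi'_{v_j,v_i}=-\varphi'_{v_i,v_j}$, so the internal terms cancel in pairs. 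Hence $b$ is a stress of $(G,p)$, and independence of $(G,p)$ forces $b=0$; in particular every external edge carries zero stress. The pseudo-stress condition at each $v_i$ then collapses to $\sum_{j \neq i} a_{v_i v_j}\varphi'_{v_i,v_j}=0$, so $a$ restricted to the $K_4$ is a stress of the independent framework $(K_4,q)$ and therefore vanishes. Thus $a=0$, and $R(G',p')^\phi$ has row independence by Proposition \ref{equivind}.

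Finally I would realize $(G',p')^\phi$ as a framework limit. Set $q^n_{v_i} := p_{v_0} + \frac{1}{n}q_{v_i}$ and $q^n_w := p_w$ otherwise, so the internal edges of $q^n$ have exactly the fixed directions $q_{v_i}-q_{v_j}$. Applying Lemma \ref{wellpos2} with the fixed set $V=\{v_1,v_2,v_3,v_4\}$, choose well-positioned $p^n$ with $p^n_{v_i}=q^n_{v_i}$ and $\|p^n-q^n\|_{V(G')}<\frac{1}{n}$, keeping the internal support functionals equal to the chosen $\varphi'_{v_i,v_j}$ while well-positioning every external edge. As $p^n_{v_i}\to p_{v_0}$ and $p^n_w \to p_w$, continuity of $\varphi$ (Proposition \ref{paper1}) yields $(G',p^n)\to(G',p')^\phi$, so Proposition \ref{framelimit} makes $(G',p^n)$ independent for large $n$; hence $G'$ is independent and, as noted, isostatic. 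The step I expect to be most delicate is this realization via Lemma \ref{wellpos2}: I must fix the four new vertices so that the internal $K_4$ support functionals remain \emph{exactly} equal to those of $(K_4,q)$ while perturbing only the external vertices to well-position the remaining edges, which is precisely what secures the entrywise convergence $R(G',p^n)\to R(G',p')^\phi$. The telescoping cancellation at $v_0$ is the conceptual heart of the argument, but it is a one-line consequence of edge-reversal antisymmetry; the real bookkeeping lies in matching each redistributed edge $v_{i(w)}w$ with its source $v_0 w$ and in the approximation set-up.
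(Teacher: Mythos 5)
Your proposal is correct and follows essentially the same route as the paper's own proof: collapse the new $K_4$ onto $p_{v_0}$, assign pseudo-support functionals taken from an isostatic placement of $K_4$ (Theorem \ref{keytheorem} with Corollary \ref{isostaticgraphs}), run the identical pseudo-stress argument reducing to independence of $(G,p)$ and of the $K_4$ placement, then realize $(G',p')^\phi$ as a framework limit via Lemma \ref{wellpos2} and Proposition \ref{framelimit}, finishing with $(2,2)$-tightness and Corollary \ref{isostaticgraphs}. The only differences are presentational (e.g.\ your explicit antisymmetry bookkeeping for the cancellation at $v_0$), not mathematical.
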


\begin{proof}
By Theorem \ref{keytheorem} and Corollary \ref{isostaticgraphs}, $K_4$ is isostatic in any non-Euclidean normed plane.

Let $G$ be independent in the normed plane $X$ with regular (and thus independent) placement $p$ in general position (Lemma \ref{regopen3}). Let $v_0 \in V(G)$, $K$ be the complete graph with vertices $v_1, v_2,v_3,v_4$ and $G'$ be the graph formed by performing a vertex-to-$K_4$ at $v_0$ by adding vertices $v_1,v_2,v_3,v_4$. We define $p'$ to be the not well-positioned placement of $G'$ that agrees with $p$ on $V(G)$ and has $p'_{v_i} = p_{v_0}$ for all $i=1,2,3,4$. Since $K \cong K_4$ is isostatic we may define an isostatic placement $x:=(x_{v_i})_{i=1}^4$ of $K$ in general position (Lemma \ref{regopen3}) and define the pseudo-support functionals
\begin{align*}
\varphi'_{v_i,v_j} := \varphi \left( \frac{x_{v_i} - x_{v_j}}{\|x_{v_i} - x_{v_j}\|} \right)
\end{align*}
for $1 \leq i <j \leq 4$; by this we may define $\phi$ and $(G',p')^\phi$.

Let $a:=(a_{vw})_{vw \in E(G')}$ be a pseudo-stress of $(G',p')^\phi$. Define $b := (b_{vw})_{vw \in E(G)}$ with $b_{vw} = a_{vw}$ for all $vw \in E(G) \cap E(G')$ and $b_{v_0 w} = a_{v_i w}$ for all $v_i w \in E(G')$ with $w \neq v_j$, $i,j \in \{1,2,3,4\}$. It is immediate that for any vertex $v \in V(G) \setminus N_G(v_0)$ the stress condition of $b$ at $v$ holds. If we observe the stress condition of $b$ at $v_0$ we note
\begin{align*}
\sum_{w \in N_G(v_0)} b_{v_0 w} \varphi_{v_0, w} = \sum_{i=1}^4 \sum_{w \in N_G(v_i)} a_{v_i w} \varphi'_{v_i, w} = \sum_{i=1}^4 0 = 0
\end{align*}
as the internal stress vectors $a_{v_i v_j} \varphi'_{v_i,v_j}$ cancel each other out, thus the stress condition of $b$ at $v_0$ holds and $b$ is a stress of $(G,p)$ is independent. As $(G,p)$ is independent then $b=0$, thus $a_{vw}=0$ for all $vw \neq v_i v_j$ for some $1 \leq i <j \leq 4$. Since $(K,x)$ is independent it follows that $a_{v_i v_j} =0$ for all $1 \leq i <j \leq 4$, thus $a =0$ and $(G',p')^\phi$ is independent.

%

Define $q^n$ to be the placement of $G'$ where $q^n$ agrees with $p'$ on $V(G') \setminus \{v_1,v_2,v_3,v_4 \}$ and $q^n_{v_i} = p_{v_0} + \frac{1}{n} x_{v_i}$. By Lemma \ref{wellpos2}, we may choose $p^n \in \mathcal{W}(G)$ such that $\|p^n -q^n \|_{V(G)} < \frac{1}{n}$ and $p^n_{v_i} =q^n_{v_i}$ for all $i=1, 2,3,4$. By our choice of $p^n$ we have that $\varphi^n_{v_i,v_j} = \varphi'_{v_i,v_j}$ for $1 \leq i < j \leq 4$, and by part \ref{paper1item2} of Proposition \ref{paper1}, $\varphi^n_{v,w} \rightarrow \varphi'_{v, w}$ as $n \rightarrow \infty$ for all $vw \in E(G') \setminus E(K)$. This implies $(G',p^n) \rightarrow (G',p')^\phi$ as $n \rightarrow \infty$ and so by Proposition \ref{framelimit}, $G'$ is independent. 

Suppose $G$ is isostatic, then by Corollary \ref{isostaticgraphs}, $G$ is $(2,2)$-tight. By Proposition \ref{nonHenneberg}, $G'$ is $(2,2)$-tight, thus by Corollary \ref{isostaticgraphs}, $G'$ isostatic as required.
\end{proof}

\section{Proof of Theorem \ref{laman2} and connectivity conditions for rigidity}\label{Generalised Laman and Lovasz and Yemini theorems}

\subsection{Proof of Theorem \ref{laman2} and immediate corollaries} \label{Generalised Laman's theorem}

We are now ready to prove our main theorem.

\begin{proof}[Proof of Theorem \ref{laman2}]
Suppose $|V(G)| \leq 2$, then $G$ is either $K_1$, $K_2$ or $K_1 \sqcup K_1$ (the graph on 2 vertices with no edges). We note all three are $(2,2)$-sparse but only $K_1$ is $(2,2)$-tight. As $K_1$ and $K_1 \sqcup K_1$ have no edges then both are independent. It is immediate that any well-positioned placement of $K_2$ is independent, thus $K_2$ is also independent. By part \ref{papernecessary2item1} of Theorem \ref{papernecessary2}, both $K_2$ and $K_1 \sqcup K_1$ are infinitesimally flexible while $K_1$ is infinitesimally rigid as required.

Let $G$ be isostatic with $|V(G)| \geq 3$, then by part \ref{papernecessary2item3} of Theorem \ref{papernecessary2}, $G$ is $(2,2)$-tight. 

Now let $G$ be $(2,2)$-tight with $|V(G)| \geq 3$, then by Proposition \ref{nonHenneberg} it can be obtained from $K_4$ by a finite sequence of 0-extensions, 1-extensions, vertex splitting and vertex-to-$K_4$ extensions. By Theorem \ref{keytheorem} and Corollary \ref{isostaticgraphs} $K_4$ is isostatic and so by Lemma \ref{zeroextopensmooth}, Lemma \ref{oneext}, Lemma \ref{edgemove} and Lemma \ref{vertexmove}, $G$ is isostatic. 
\end{proof}

We now have an immediate corollary.

\begin{corollary}
A graph is rigid in all normed planes if and only if it contains a proper $(2,3)$-tight spanning subgraph.
\end{corollary}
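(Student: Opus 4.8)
The plan is to split the phrase ``rigid in all normed planes'' into its two meaningful halves --- rigid in the Euclidean plane and rigid in every non-Euclidean plane --- and then invoke the two completed characterisations: Laman's theorem (Theorem~\ref{lamanstheorem}) together with part~\ref{papernecessary2item4} of Theorem~\ref{papernecessary2} for $k=3$, and Theorem~\ref{laman2} together with the same part for $k=2$. Since every non-Euclidean plane yields the same $(2,2)$-tight criterion, rigidity in one non-Euclidean plane is equivalent to rigidity in all of them. The combinatorial bridge between the $(2,3)$ and $(2,2)$ worlds is the elementary fact that inserting one edge converts $(2,3)$-tightness into $(2,2)$-tightness, which I would record as a short claim: if $H$ is $(2,3)$-tight and $e$ joins two vertices of $H$ with $e \notin E(H)$, then $H+e$ is $(2,2)$-tight. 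The edge count $|E(H+e)| = 2|V(H)|-2$ is immediate, and $(2,2)$-sparsity is verified on any subgraph $H' \subseteq H+e$ by cases: if $e \notin E(H')$ then $H'$ is a subgraph of the $(2,3)$-sparse graph $H$, hence already $(2,2)$-sparse; if $e \in E(H')$ then $H'-e \subseteq H$ gives $|E(H')| \leq (2|V(H')|-3)+1 = 2|V(H')|-2$.

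For the ``if'' direction I would start from a proper $(2,3)$-tight spanning subgraph $H \subseteq G$. By Theorem~\ref{lamanstheorem}, $H$ is isostatic, hence rigid, in the Euclidean plane; since rigidity is monotone under edge addition (any flex of $(G,p)$ is a flex of $(H,p)$, so $\mathcal{F}(G,p) \subseteq \mathcal{F}(H,p)$) and $H$ spans $G$, the graph $G$ is rigid in the Euclidean plane. Because $H$ is a \emph{proper} spanning subgraph there is an edge $e \in E(G) \setminus E(H)$, and the claim makes $H+e$ a $(2,2)$-tight spanning subgraph of $G$; by Theorem~\ref{laman2} it is isostatic in every non-Euclidean plane, so by the same monotonicity $G$ is rigid in every non-Euclidean plane. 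Hence $G$ is rigid in all normed planes.

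For the ``only if'' direction, assume $G$ is rigid in all normed planes. Rigidity in the Euclidean plane and part~\ref{papernecessary2item4} of Theorem~\ref{papernecessary2} (with $k=3$) supply a $(2,3)$-tight spanning subgraph $H$ with $|E(H)| = 2|V(G)|-3$. Rigidity in some non-Euclidean plane and the same part (with $k=2$) supply a $(2,2)$-tight spanning subgraph, forcing $|E(G)| \geq 2|V(G)|-2 > |E(H)|$; thus $E(H) \subsetneq E(G)$, so $H$ is a \emph{proper} spanning subgraph, which is exactly what is required. The step I expect to be the only real subtlety is the bookkeeping at small vertex counts: the $\max\{\,\cdot\,,0\}$ in the sparsity definition must be handled when $|V(H')| \leq 1$ inside the claim, and the degenerate graph $K_1$ (rigid in every plane yet possessing no proper spanning subgraph, since any $(2,3)$-tight graph forces $|V| \geq 2$) should be excluded or treated separately, after which the clean equivalence above covers every remaining graph.
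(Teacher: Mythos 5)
Your proof is correct and follows essentially the same route as the paper: for the ``if'' direction, add one edge of $E(G)\setminus E(H)$ to the proper $(2,3)$-tight spanning subgraph $H$ to obtain a $(2,2)$-tight spanning subgraph and invoke Theorem~\ref{lamanstheorem} and Theorem~\ref{laman2}; for the ``only if'' direction, extract a $(2,3)$-tight spanning subgraph from Euclidean rigidity and use the bound $|E(G)| \geq 2|V(G)|-2 > 2|V(G)|-3 = |E(H)|$ forced by non-Euclidean rigidity to conclude $H$ is proper. The additional details you supply --- the explicit verification that $H+e$ is $(2,2)$-tight, the monotonicity of rigidity under edge addition, and the flag on the degenerate case $K_1$ (which is rigid by the paper's conventions yet has no proper $(2,3)$-tight spanning subgraph, so the statement implicitly assumes $|V(G)|\geq 2$) --- are points the paper's own proof passes over silently, and your handling of them is sound.
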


\begin{proof}
Let $G$ contain a proper $(2,3)$-tight spanning subgraph $H$. As $H$ is proper there exists $e \in E(G) \setminus E(H)$; it follows that $H+e$ is $(2,2)$-tight spanning subgraph of $G$. By Theorem \ref{lamanstheorem} and Theorem \ref{laman2}, $G$ is rigid in all normed planes.

Suppose $G$ is rigid in all normed planes, then by Theorem \ref{lamanstheorem}, $G$ contains a $(2,3)$-tight spanning subgraph $H$ and $|E(G)| \geq 2|V(G)| -2$. Since $|E(H)| < |E(G)|$ then there exists $e \in E(G) \setminus E(H)$, thus $H$ is proper.
\end{proof}

We note that there exist $(2,2)$-tight graphs which are not rigid in the Euclidean plane, e.g.~consider two copies of $K_4$ joined at a single vertex (see Figure \ref{threeconnect}).

\subsection{Analogues of Lov\'{a}sz \& Yemini's theorem for non-Euclidean normed planes} \label{Generalised Lovasz and Yemini results}

We say that a connected graph is \textit{$k$-connected} if $G$ remains connected after the removal of any $k -1$ vertices and \textit{$k$-edge-connected} if $G$ remains connected after the removal of any $k -1$ edges. This section shall deal with how we may obtain sufficient conditions for rigidity from the connectivity of the graph. The first result is the famous connectivity result given by Lov\'{a}sz \& Yemini in \cite{lovasz}.

\begin{theorem}\label{lovasz}
Any $6$-connected graph is rigid in the Euclidean plane.
\end{theorem}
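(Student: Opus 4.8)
The plan is to reduce the statement to a purely combinatorial counting condition on $G$ and then to extract that condition from $6$-connectivity via a separator argument. First I would invoke Laman's theorem (Theorem \ref{lamanstheorem}): a graph $G$ is rigid in the Euclidean plane if and only if it contains a spanning $(2,3)$-tight subgraph, equivalently if and only if the generic $2$-dimensional rigidity matroid of $G$ has rank $2|V(G)|-3$. So it suffices to show that $6$-connectivity forces this maximal rank.

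The main tool is the Lov\'{a}sz--Yemini rank formula for the $2$-dimensional rigidity matroid: for $G=(V,E)$,
\begin{align*}
r(G) = \min_{\{E_1,\dots,E_m\}} \sum_{i=1}^m \left( 2|V(E_i)| - 3 \right),
\end{align*}
where the minimum ranges over all partitions of $E$ into nonempty blocks and $V(E_i)$ denotes the set of vertices incident to $E_i$. Writing $n = |V|$, $n_i = |V(E_i)|$, and letting $d(v)$ be the number of blocks containing an edge at $v$, the desired inequality $r(G) \ge 2n-3$ is equivalent, after substituting $\sum_i n_i = \sum_v d(v)$ and $\sum_v 1 = n$, to
\begin{align*}
2 \sum_{v \in V} (d(v) - 1) \ge 3(m-1)
\end{align*}
for every partition into $m \ge 2$ blocks. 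Thus the whole theorem reduces to verifying this single inequality for an arbitrary edge-partition of a $6$-connected graph.

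To verify it I would exploit $6$-connectivity as follows. For a block $E_i$, call a vertex \emph{private} to $E_i$ if all its incident edges lie in $E_i$ (that is $d(v)=1$ and $v\in V(E_i)$), and let $T_i := \{\, v \in V(E_i) : d(v)\ge 2 \,\}$ be its boundary. A private vertex of $E_i$ has all of its neighbours inside $V(E_i)$, so whenever $E_i$ has a private vertex and $V(E_i)\neq V$, the set $T_i$ separates the private vertices of $E_i$ from $V\setminus V(E_i)$; hence $|T_i|\ge 6$. On the other hand, because $G$ is connected and the blocks are glued into a single component only through shared vertices, viewing each vertex $v$ with $d(v)\ge 2$ as a hyperedge joining the $d(v)$ blocks at it yields a connected hypergraph on the $m$ blocks, whence $\sum_v (d(v)-1) \ge m-1$. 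Combining this baseline bound (which alone gives only the insufficient factor $1$) with the separator bounds $|T_i|\ge 6$ and the identity $\sum_i |T_i| = \sum_{v\,:\,d(v)\ge 2} d(v)$, I would push the estimate up to the required factor $3/2$.

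The main obstacle is precisely this final combination. The crude separator bound $|T_i|\ge 6$ applies only to blocks that possess a private vertex and do not span all of $V$, so one must treat separately the \emph{spanning} blocks (with $V(E_i)=V$) and the blocks with no private vertex, and balance these degenerate cases against the connectivity baseline $\sum_v(d(v)-1)\ge m-1$. Showing that the deficiency $2\sum_v(d(v)-1) - 3(m-1)$ remains nonnegative across all partition types is the technical heart of the Lov\'{a}sz--Yemini argument and the step I expect to require the most care; by contrast, the two reductions above are essentially formal once Laman's theorem and the rank formula are available.
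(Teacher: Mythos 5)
You should first be aware that the paper does not prove this statement: Theorem \ref{lovasz} is quoted as background from Lov\'{a}sz \& Yemini \cite{lovasz}, with no internal proof, so your attempt has to stand entirely on its own. As it stands, it does not. Your setup is correct: the reduction via Theorem \ref{lamanstheorem}, the invocation of the Lov\'{a}sz--Yemini rank formula (which, note, is itself the main theorem of the paper being cited, so you are reconstructing their derivation rather than finding an independent route), the algebraic reformulation $2\sum_{v}(d(v)-1)\geq 3(m-1)$, the separator bound $|T_i|\geq 6$ for a non-spanning block with a private vertex, and the hypergraph-connectivity bound $\sum_{v}(d(v)-1)\geq m-1$ are all individually sound. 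But you then explicitly defer the combination of these facts, calling it ``the technical heart of the Lov\'{a}sz--Yemini argument and the step I expect to require the most care.'' That heart \emph{is} the theorem; everything you have actually carried out is the formal part.

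Moreover, the gap is not merely unexecuted bookkeeping: the two tools you name cannot be combined to reach the factor $3/2$, because there are partitions on which both are essentially vacuous. Consider the partition of $E$ into single edges, so $m=|E|$, every block has $|T_i|=2$, and there are no private vertices and no spanning blocks. Your separator bound says nothing, and hypergraph connectivity gives only $|E|\geq n-1$, while the target inequality in this case is exactly $|E|\geq 2n-3$. What saves this case is the minimum degree: $6$-connectivity forces $\delta(G)\geq 6$, hence $|E|\geq 3n$. This is symptomatic of what the actual argument needs. One charges each block's count back to vertices via $2|V_i|-3=\sum_{v\in V_i}\bigl(2-3/|V_i|\bigr)$, observes that a block containing $e$ edges at $v$ satisfies $|V_i|\geq e+1$, and uses the distribution of each vertex's at least six incident edges among its blocks, together with a convexity estimate, to show that vertices lying in two or more blocks carry value at least $2$; the separator bound is then what absorbs the deficiency of the private vertices. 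So the degree condition $\delta(G)\geq 6$ enters in an essential quantitative way that your sketch never mentions, and without it (or an equivalent) no balancing of spanning blocks against blocks with no private vertex will close the inequality.
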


The following is a corollary of a famous result of Nash-Williams \cite[Theorem 1]{nashwilliams}.

\begin{corollary}\label{nashwilliams}
The following properties hold:
\begin{enumerate}[(i)]
\item $G$ is $(k,k)$-tight if and only if $G$ contains $k$ edge-disjoint spanning trees $T_1, \ldots, T_k$ where $E(G) = \cup_{i=1}^k E(T_i)$ 

\item If $G$ is $k$-edge-connected then $G$ contains $k$ edge-disjoint spanning trees.
\end{enumerate}
\end{corollary}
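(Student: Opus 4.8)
The key tool for both parts is the tree-packing theorem of Nash--Williams (Theorem 1 of \cite{nashwilliams}): a finite graph $G$ admits $k$ edge-disjoint spanning trees if and only if for every partition $P=\{V_1,\dots,V_r\}$ of $V(G)$ the number $e_G(P)$ of edges whose endpoints lie in distinct classes satisfies $e_G(P)\ge k(r-1)$. The plan is to deduce part (i) from this criterion by a direct edge-count, and then to deduce part (ii) by verifying the same partition inequality from the connectivity hypothesis. Throughout I write $e_G(P)=|E(G)|-\sum_{i=1}^r|E(G[V_i])|$, and I interpret $(k,k)$-tight in the natural way, namely $|E(H)|\le k(|V(H)|-1)$ for every subgraph $H$ together with $|E(G)|=k|V(G)|-k$.

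For the forward direction of (i), suppose $E(G)=\bigcup_{i=1}^k E(T_i)$ is a disjoint union of $k$ spanning trees. Each $T_i$ has exactly $|V(G)|-1$ edges, so $|E(G)|=k(|V(G)|-1)=k|V(G)|-k$, which is the tightness count. For any subgraph $H$ the restriction $T_i\cap H$ is a forest on $V(H)$ and so has at most $|V(H)|-1$ edges; summing over $i$ gives $|E(H)|\le k(|V(H)|-1)$, the $(k,k)$-sparsity bound, and hence $G$ is $(k,k)$-tight. For the converse I would assume $G$ is $(k,k)$-tight, take an arbitrary partition $P$ into $r$ classes, and apply the count together with the sparsity bound $|E(G[V_i])|\le k(|V_i|-1)$ to obtain
\[
e_G(P)\ge k(|V(G)|-1)-\sum_{i=1}^r k(|V_i|-1)=k(r-1).
\]
The tree-packing theorem then yields $k$ edge-disjoint spanning trees; since they contain $k(|V(G)|-1)=|E(G)|$ edges in total, they must exhaust $E(G)$, giving the required decomposition.

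For part (ii) I would again appeal to the tree-packing theorem, so the task reduces to checking the partition inequality $e_G(P)\ge k(r-1)$ for every partition $P$ of a $k$-edge-connected graph $G$. The natural route is to bound the crossing edges from below through the edge-cuts separating the classes: $k$-edge-connectivity forces each proper class $V_i$ to have boundary cut $\partial V_i$ of size at least $k$, and one then assembles these cut bounds into a lower bound on $e_G(P)$, noting that every crossing edge is counted in the boundaries of exactly two classes. Once $e_G(P)\ge k(r-1)$ is secured uniformly over all $P$, the existence of $k$ edge-disjoint spanning trees is immediate from Theorem 1 of \cite{nashwilliams}.

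The step I expect to be the main obstacle is precisely this last counting in part (ii): converting the per-class cut bounds supplied by $k$-edge-connectivity into the global Nash--Williams inequality $e_G(P)\ge k(|P|-1)$ simultaneously over all partitions. The double-counting identity $\sum_{i}|\partial V_i|=2e_G(P)$ delivers $e_G(P)\ge \tfrac{1}{2}kr$ directly, and controlling the discrepancy between $\tfrac{1}{2}kr$ and $k(r-1)$ for partitions with many classes is the delicate point on which the argument turns. Part (i), by contrast, is a mechanical consequence of the edge-count and the forest bound $|E(T_i\cap H)|\le|V(H)|-1$, and should present no difficulty.
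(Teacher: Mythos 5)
Your part (i) is correct, and it follows what is evidently the intended route: the paper supplies no argument at all for this corollary, merely citing Theorem 1 of \cite{nashwilliams} (the tree-packing theorem), so your derivation --- the forest bound $|E(H)\cap E(T_i)|\le |V(H)|-1$ for one direction, and the partition count $e_G(P)\ge k(|V(G)|-1)-\sum_{i}k(|V_i|-1)=k(r-1)$ followed by an edge count to force the trees to exhaust $E(G)$ for the other --- is exactly the standard way to make that citation precise.

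The ``delicate point'' you flag in part (ii), however, is not a step you failed to settle; it is a genuine and unfixable gap, because part (ii) as stated is false for every $k\ge 2$. Your double count $\sum_i|\partial V_i|=2e_G(P)$ with the cut bound $|\partial V_i|\ge k$ yields only $e_G(P)\ge kr/2$, and $kr/2\ge k(r-1)$ holds only for $r\le 2$; no refinement can close this, since a cycle $C_n$ ($n\ge 3$) is $2$-edge-connected yet has $n<2(n-1)$ edges and so cannot contain two edge-disjoint spanning trees. The statement that is actually true --- and is the one the paper really uses, extracting two trees from $4$-edge-connectivity in Theorem \ref{lovasz2} and $2k$ trees from $4k$-edge-connectivity in Theorem \ref{jordan2} --- is that every $2k$-edge-connected graph contains $k$ edge-disjoint spanning trees. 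With the hypothesis corrected, your own computation finishes the proof immediately: each class satisfies $|\partial V_i|\ge 2k$, so $e_G(P)\ge\tfrac{1}{2}(2k)r=kr\ge k(r-1)$ for every partition, and Nash--Williams applies. In short: your method is the right one for both parts, your proof of (i) is complete, and the obstruction in (ii) lies in the corollary's statement (the edge-connectivity hypothesis must be doubled), not in your argument.
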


Using Corollary \ref{nashwilliams} we may obtain an analogous result.

\begin{theorem}\label{lovasz2}
Any $4$-edge-connected graph is rigid in all non-Euclidean normed planes.
\end{theorem}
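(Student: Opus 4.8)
The plan is to reduce the statement to producing a $(2,2)$-tight spanning subgraph of $G$, and then to extract such a subgraph from the edge-connectivity hypothesis via tree packing. The reduction rests on Theorem \ref{laman2} together with the monotonicity of rigidity under adding edges on a fixed vertex set: if $H \subseteq G$ is a spanning subgraph that is isostatic, choose a placement $p$ that is well-positioned for $G$ and infinitesimally rigid for $H$ (such $p$ exists since the infinitesimally rigid locus is open among well-positioned placements and the well-positioned placements are dense, by Lemma \ref{wellpos} and Lemma \ref{regopen2}). Then every flex of $(G,p)$ is a flex of $(H,p)$, so $\mathcal{T}(p) \subseteq \mathcal{F}(G,p) \subseteq \mathcal{F}(H,p) = \mathcal{T}(p)$, forcing equality and hence infinitesimal rigidity of $(G,p)$. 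This is precisely the implication already used in the proof of the corollary following Theorem \ref{laman2}, so it suffices to show a $4$-edge-connected graph contains a spanning $(2,2)$-tight subgraph.

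First I would extract two edge-disjoint spanning trees. Since $G$ is $4$-edge-connected, Corollary \ref{nashwilliams} supplies two edge-disjoint spanning trees $T_1,T_2$ of $G$ (the relevant consequence of Nash--Williams is that a $2k$-edge-connected graph contains $k$ edge-disjoint spanning trees; here $k=2$). Let $H$ be the spanning subgraph of $G$ with edge set $E(T_1)\cup E(T_2)$. As the trees are edge-disjoint and each has $|V(G)|-1$ edges, $H$ has exactly $2|V(G)|-2$ edges, which is the required global count for $(2,2)$-tightness.

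It then remains to verify the $(2,2)$-sparsity of $H$, which is the technical core. For any nonempty $S \subseteq V(G)$, the restriction $T_i[S]$ is a subforest of the tree $T_i$ and hence has at most $|S|-1$ edges; summing over $i=1,2$ gives $|E(H[S])| \le 2|S|-2$. Thus every subgraph of $H$ obeys the $(2,2)$-sparsity bound, and combined with $|E(H)| = 2|V(G)|-2$ this shows $H$ is $(2,2)$-tight. By Theorem \ref{laman2}, $H$ is isostatic, hence rigid, and since $H$ is spanning the monotonicity of the first paragraph yields that $G$ is rigid in every non-Euclidean normed plane.

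The only genuinely delicate point is the tree-packing step: one must convert the edge-connectivity hypothesis into two edge-disjoint spanning trees with the correct numerology, using that $2k$-edge-connectivity (rather than merely $k$-edge-connectivity) guarantees $k$ edge-disjoint spanning trees. Once the two trees are in hand, the forest/sparsity count is routine, and the conclusion follows immediately from Theorem \ref{laman2}.
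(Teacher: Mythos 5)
Your proposal is correct and follows essentially the same route as the paper: use Nash--Williams tree packing (Corollary \ref{nashwilliams}) to extract two edge-disjoint spanning trees from $4$-edge-connectivity, observe that their union is a $(2,2)$-tight spanning subgraph, and conclude rigidity from Theorem \ref{laman2}. The only differences are matters of detail rather than of route: you verify $(2,2)$-tightness of the union by a direct forest count instead of citing part (i) of Corollary \ref{nashwilliams}, and you spell out (correctly, via density of well-positioned placements and relative openness of the rigid locus) the spanning-subgraph monotonicity step that the paper leaves implicit in its appeal to Theorem \ref{laman2}.
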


\begin{proof}
By Corollary \ref{nashwilliams} if $G$ is $4$-edge-connected then it will contain two edge-disjoint spanning trees, thus by Corollary \ref{nashwilliams}, $G$ must have a $(2,2)$-tight spanning subgraph $H$. By Theorem \ref{laman2} we have that $G$ is rigid in any non-Euclidean normed plane as required.
\end{proof}

Since $k$-connectivity implies $k$-edge-connectivity then we can see that a $4$-connected graph will also be rigid in all non-Euclidean normed planes. We note that this is the best possible result as we can find graphs that are $3$-edge-connected but do not contain a $(2,2)$-tight spanning subgraph (see Figure \ref{threeconnect}).

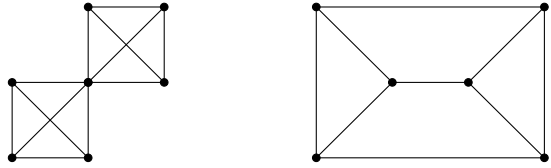
\begin{figure}
\[
\begin{tikzpicture}

\draw[fill] (8,1) circle [radius=0.05];
\draw[fill] (8,-1) circle [radius=0.05];
\draw[fill] (7,0) circle [radius=0.05];
\draw[fill] (6,0) circle [radius=0.05];
\draw[fill] (5,-1) circle [radius=0.05];
\draw[fill] (5,1) circle [radius=0.05];

\draw (5,1) -- (5,-1) -- (8,-1) -- (8,1) -- (5,1) -- (6,0) -- (7,0) -- (8,1);
\draw (5,-1) -- (6,0);
\draw (8,-1) -- (7,0);

\draw[fill] (2,0) circle [radius=0.05];
\draw[fill] (1,0) circle [radius=0.05];
\draw[fill] (2,-1) circle [radius=0.05];
\draw[fill] (1,-1) circle [radius=0.05];

\draw (2,0) -- (1,0) -- (1,-1) -- (2,-1) -- (2,0);
\draw (2,0) -- (1,-1);
\draw (1,0) -- (2,-1);

\draw[fill] (2,0) circle [radius=0.05];
\draw[fill] (3,0) circle [radius=0.05];
\draw[fill] (2,1) circle [radius=0.05];
\draw[fill] (3,1) circle [radius=0.05];

\draw (2,0) -- (3,0) -- (3,1) -- (2,1) -- (2,0);
\draw (2,0) -- (3,1);
\draw (3,0) -- (2,1);

\end{tikzpicture}
\]
\caption{(Left): A $(2,2)$-tight graph that is not rigid in the Euclidean plane. (Right): A $3$-connected (and hence $3$-edge-connected) graph that does not contain a $(2,2)$-tight spanning subgraph.}\label{threeconnect}
\end{figure}

\begin{corollary}
Any $6$-connected graph is rigid in all normed planes.
\end{corollary}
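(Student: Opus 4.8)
The plan is to split according to whether the normed plane is Euclidean or non-Euclidean, since by definition every normed plane falls into exactly one of these two classes, and then to apply the matching rigidity theorem in each case. A fixed $6$-connected graph $G$ will be shown to be rigid in every plane of each type, which is precisely what ``rigid in all normed planes'' requires.

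The key elementary observation is that $k$-connectivity implies $k$-edge-connectivity, as already noted in the remark following Theorem \ref{lovasz2}; thus a $6$-connected graph $G$ is in particular $6$-edge-connected and hence $4$-edge-connected. For the non-Euclidean case this is exactly the hypothesis of Theorem \ref{lovasz2}, so $G$ is rigid in every non-Euclidean normed plane. For the Euclidean case I would invoke the Lov\'{a}sz--Yemini theorem (Theorem \ref{lovasz}) directly, which states that any $6$-connected graph is rigid in the Euclidean plane; since all Euclidean planes are isometrically isomorphic (Proposition \ref{paperiso1}) and infinitesimal rigidity is an isometry invariant, this settles every Euclidean normed plane.

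Combining the two cases yields rigidity in all normed planes. There is no genuine obstacle here: the substantive content lives entirely in Theorems \ref{lovasz} and \ref{lovasz2}, and the corollary is simply the union of the Euclidean and non-Euclidean conclusions. The only point requiring verification is the trivial implication from vertex-connectivity to edge-connectivity, which guarantees that the $6$-connected hypothesis feeds correctly into the $4$-edge-connected hypothesis of Theorem \ref{lovasz2}.
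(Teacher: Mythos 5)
Your proposal is correct and is essentially identical to the paper's own proof: both split into the Euclidean case (handled by Theorem \ref{lovasz}) and the non-Euclidean case (handled via the chain $6$-connected $\Rightarrow$ $6$-edge-connected $\Rightarrow$ $4$-edge-connected and Theorem \ref{lovasz2}). Your additional remark that all Euclidean planes are isometrically isomorphic, so rigidity transfers from the standard Euclidean plane, is a small point of care the paper leaves implicit.
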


\begin{proof}
As $G$ is $6$-connected then by Theorem \ref{lovasz}, $G$ is rigid in the Euclidean normed plane. As $6$-connected implies 6-edge-connected then $G$ is $4$-edge-connected, thus by Theorem \ref{lovasz2}, $G$ is rigid in any non-Euclidean normed plane.
\end{proof}

This following result is generalisation of Lov\'{a}sz \& Yemini's theorem given by Tibor Jord\'{a}n on the number of rigid spanning subgraphs contained in a graph.

\begin{theorem}\cite[Theorem 3.1]{jordanconnect}\label{jordan}
Any $6k$-connected graph contains $k$ edge-disjoint $(2,3)$-tight spanning subgraphs.
\end{theorem}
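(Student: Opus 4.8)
The plan is to reduce the statement to a packing problem in the generic two-dimensional Euclidean rigidity matroid $\mathcal{R}_2(G)$, whose ground set is $E(G)$ and whose independent sets are exactly the $(2,3)$-sparse edge sets. By Laman's theorem (Theorem~\ref{lamanstheorem}), a spanning subgraph is $(2,3)$-tight precisely when its edge set is a basis of $\mathcal{R}_2(G)$, so long as $G$ is itself rigid and hence $r\bigl(E(G)\bigr)=2|V(G)|-3$, where $r$ denotes the rank function of $\mathcal{R}_2(G)$. Since a $6k$-connected graph is $6$-connected, Lov\'{a}sz \& Yemini's theorem (Theorem~\ref{lovasz}) already furnishes rigidity, so this rank identity holds. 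Consequently the assertion ``$G$ contains $k$ edge-disjoint $(2,3)$-tight spanning subgraphs'' is equivalent to ``$\mathcal{R}_2(G)$ admits $k$ pairwise disjoint bases.''

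First I would invoke Edmonds' matroid base-packing theorem: a matroid with rank function $r$ on ground set $E$ contains $k$ pairwise disjoint bases if and only if
\[
|E \setminus F| \;\geq\; k\bigl(r(E)-r(F)\bigr) \qquad \text{for every } F \subseteq E .
\]
Applying this to $\mathcal{R}_2(G)$ and substituting the Lov\'{a}sz--Yemini rank formula
\[
r(F) \;=\; \min_{\mathcal{F}} \sum_{F'\in\mathcal{F}} \bigl(2\,|V(F')|-3\bigr),
\]
where $\mathcal{F}$ ranges over covers of $F$ by edge sets each spanning at least two vertices, turns the theorem into a single counting inequality to be checked for all $F$. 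It suffices to verify it on the flats of $\mathcal{R}_2(G)$: replacing $F$ by its closure leaves $r(F)$ unchanged while only shrinking $|E\setminus F|$, so the flats give the tightest instances.

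The heart of the argument, and the step I expect to be the main obstacle, is deriving this counting inequality from $6k$-connectivity. For a flat $F$ one bounds the rank deficiency $r(E)-r(F)$ by the total slack $\sum_i(2|V_i|-3)-(2n-3)$ of an optimal cover $\{V_i\}$ realising $r(F)$, and one must then show that the edges of $E\setminus F$, which join distinct pieces of the cover, are numerous enough to pay $k$ times this slack. This is exactly the mechanism behind the $k=1$ case of Lov\'{a}sz \& Yemini, where $6$-connectivity is what balances the ledger; the factor $6k$ appears because each of the $k$ disjoint bases claims its own share of the crossing edges. The delicate points are controlling pieces with few vertices (where the $-3$ terms dominate) and overlapping cover pieces, and matching the connectivity bound on crossing edges against the slack with the correct constant; this is where high vertex-connectivity, via Mader-type estimates on the number of edges leaving a vertex set, must be brought to bear precisely.
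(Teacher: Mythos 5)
This statement is quoted in the paper as an external result (\cite[Theorem 3.1]{jordanconnect}); the paper itself offers no proof, so your proposal must stand on its own. Its scaffolding is sound: the identification of $(2,3)$-tight spanning subgraphs with bases of the generic two-dimensional rigidity matroid (valid once $G$ is rigid, which Theorem \ref{lovasz} supplies for $6k$-connected graphs), Edmonds' base-packing theorem, the Lov\'{a}sz--Yemini rank formula, and the reduction to flats are all correct, and they are indeed the standard frame around Jord\'{a}n's argument.

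However, there is a genuine gap, and you have named it yourself: the packing inequality $|E \setminus F| \geq k\bigl(r(E)-r(F)\bigr)$ is never actually derived from $6k$-connectivity; it is only described as ``the main obstacle.'' This step is not a routine transplant of the Lov\'{a}sz--Yemini mechanism. Observe that for $k=1$ Edmonds' condition is vacuous, since $r(E)-r(F) \leq |E\setminus F|$ holds in \emph{every} matroid; what $6$-connectivity buys in the $k=1$ case is the rank identity $r(E)=2|V(G)|-3$ (rigidity), not any packing inequality. So the ``mechanism behind the $k=1$ case'' you appeal to is a proof of a different statement, and there is nothing to imitate directly: the charging argument that pays $k$ times the rank deficiency of a flat out of the edges crossing an optimal cover --- controlling small cover pieces, vertices lying in many pieces, and the exact constant $6k$ --- has to be built from scratch, and it is precisely the content of Jord\'{a}n's theorem. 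Until that inequality is established, your proposal is a correct reduction plus an unproven claim, i.e.~a proof strategy rather than a proof.
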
 

Yet again we may obtain an analogous result.

\begin{theorem}\label{jordan2}
Any $4k$-edge-connected graph contains $k$ edge-disjoint $(2,2)$-tight spanning subgraphs.
\end{theorem}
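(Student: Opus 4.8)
The plan is to mirror the proof of Theorem~\ref{lovasz2} and scale its tree-packing argument by a factor of $k$. The crucial structural fact is part (i) of Corollary~\ref{nashwilliams}: a spanning subgraph is $(2,2)$-tight precisely when it is the edge-disjoint union of two spanning trees. Consequently, to exhibit $k$ edge-disjoint $(2,2)$-tight spanning subgraphs it is enough to produce $2k$ pairwise edge-disjoint spanning trees and then assemble them into $k$ disjoint pairs.

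First I would feed the $4k$-edge-connectivity of $G$ into Corollary~\ref{nashwilliams} (part (ii)) to obtain $2k$ pairwise edge-disjoint spanning trees $T_1, \dots, T_{2k}$ of $G$; the hypothesis $4k$-edge-connected is more than sufficient to guarantee this many trees. Next, for each $i \in \{1, \dots, k\}$ I would define $H_i$ to be the spanning subgraph of $G$ on vertex set $V(G)$ with edge set $E(T_{2i-1}) \cup E(T_{2i})$. Since $T_{2i-1}$ and $T_{2i}$ are edge-disjoint spanning trees whose union is precisely $E(H_i)$, part (i) of Corollary~\ref{nashwilliams} applies directly and shows that $H_i$ is $(2,2)$-tight; in particular no separate sparsity count is required. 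Because $T_1, \dots, T_{2k}$ are pairwise edge-disjoint, so are $H_1, \dots, H_k$, which completes the argument.

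The proof is essentially bookkeeping, so the only point requiring care is the accounting of the connectivity constant. One must confirm that $4k$-edge-connectivity does deliver at least $2k$ edge-disjoint spanning trees, and that $2k$ trees are exactly what is consumed by $k$ disjoint $(2,2)$-tight subgraphs; this is the non-Euclidean counterpart of the base constant $4$ in Theorem~\ref{lovasz2} (the case $k=1$), parallel to the way Jord\'{a}n's Euclidean Theorem~\ref{jordan} scales the Lov\'{a}sz--Yemini constant $6$. Finally, invoking Theorem~\ref{laman2} on each $H_i$ shows that the subgraphs produced are in fact isostatic, so the result simultaneously certifies many rigid spanning subgraphs of $G$ in every non-Euclidean normed plane.
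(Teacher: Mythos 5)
Your proof is correct and is essentially the same as the paper's: both obtain $2k$ edge-disjoint spanning trees from $4k$-edge-connectivity via Corollary \ref{nashwilliams}(ii) and then pair them up, using Corollary \ref{nashwilliams}(i) to certify that each pair forms a $(2,2)$-tight spanning subgraph. Your version just makes the pairing and edge-disjointness bookkeeping explicit, which the paper leaves implicit.
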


\begin{proof}
By Corollary \ref{nashwilliams} if $G$ is $4k$-edge-connected then it will contain $2k$ edge-disjoint spanning trees, thus by Corollary \ref{nashwilliams}, $G$ has $k$ $(2,2)$-tight spanning subgraphs.
\end{proof}

Combining this we have the final generalisation.

\begin{corollary}\label{jordan3}
Any $6k$-connected graph contains $k$ edge-disjoint spanning subgraphs $H_1, \ldots, H_k$ that are rigid in any normed plane.
\end{corollary}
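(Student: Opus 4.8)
The plan is to combine Theorem \ref{jordan} with the earlier characterisation of graphs that are rigid in every normed plane. Recall that that corollary states a graph is rigid in all normed planes precisely when it contains a \emph{proper} $(2,3)$-tight spanning subgraph. Since Theorem \ref{jordan} already hands us $k$ edge-disjoint $(2,3)$-tight spanning subgraphs $T_1,\dots,T_k$ of a $6k$-connected graph $G$, each $T_i$ is rigid in the Euclidean plane but, having only $2|V(G)|-3$ edges, is \emph{not} rigid in a non-Euclidean plane. The idea is therefore to enlarge each $T_i$ by a single extra edge $e_i$ of $G$, setting $H_i := T_i + e_i$, so that $T_i$ becomes a proper $(2,3)$-tight spanning subgraph of $H_i$; the characterisation corollary then immediately gives that $H_i$ is rigid in every normed plane.

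First I would verify that enough edges remain outside $\bigcup_i E(T_i)$ to supply the $e_i$. Since $G$ is $6k$-connected it has minimum degree at least $6k$, so $|E(G)| \geq 3k\,|V(G)|$, whereas the $T_i$ together use only $k\bigl(2|V(G)|-3\bigr)$ edges. Hence the number of edges of $G$ lying in no $T_i$ is at least
\begin{align*}
3k\,|V(G)| - k\bigl(2|V(G)|-3\bigr) = k\,|V(G)| + 3k \geq k,
\end{align*}
so I may select $k$ distinct edges $e_1,\dots,e_k \in E(G)\setminus \bigcup_{i=1}^{k} E(T_i)$ and set $H_i := T_i + e_i$.

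It then remains to check the two required properties. Edge-disjointness of the $H_i$ follows because the $T_i$ are pairwise edge-disjoint and the $e_i$ are distinct edges belonging to none of the $T_j$, so no edge can lie in two of the $H_i$. Each $H_i$ is spanning since $T_i$ already is, and $T_i$ is a proper $(2,3)$-tight spanning subgraph of $H_i$ (proper because $e_i \in E(H_i)\setminus E(T_i)$); the earlier corollary then yields rigidity of $H_i$ in any normed plane.

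The step I expect to require the most care is conceptual rather than computational: one must resist the temptation to invoke Theorem \ref{jordan2} directly, since a $(2,2)$-tight graph need \emph{not} be rigid in the Euclidean plane (as the example of two copies of $K_4$ joined at a vertex shows), so the $(2,2)$-tight spanning subgraphs it produces are not automatically rigid in all normed planes. Routing through the $(2,3)$-tight subgraphs of Theorem \ref{jordan} and adjoining one edge to each is precisely what guarantees simultaneous Euclidean and non-Euclidean rigidity; the only genuine content beyond this observation is the elementary edge count above.
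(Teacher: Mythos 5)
Your proof is correct, and it shares the paper's core strategy: take the $k$ edge-disjoint $(2,3)$-tight spanning subgraphs supplied by Theorem \ref{jordan} and enlarge each by one spare edge of $G$, so that each $H_i$ contains a proper $(2,3)$-tight spanning subgraph and is therefore rigid in every normed plane (equivalently, $H_i$ is $(2,2)$-tight and contains a $(2,3)$-tight spanning subgraph, so Theorems \ref{lamanstheorem} and \ref{laman2} apply). The genuine difference lies in how the spare edges are produced. The paper invokes Theorem \ref{jordan2}: since $6k$-connected implies $4k$-edge-connected, $G$ also has $k$ edge-disjoint $(2,2)$-tight spanning subgraphs $B_1,\ldots,B_k$, and with $A := \cup_{i=1}^k A_i$ and $B := \cup_{i=1}^k B_i$ the count $|E(B)|-|E(A)| = k\bigl(2|V(G)|-2\bigr) - k\bigl(2|V(G)|-3\bigr) = k$ yields $k$ edges of $E(B)\setminus E(A)$ to distribute. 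You instead use the minimum-degree bound: $6k$-connectivity forces minimum degree at least $6k$, hence $|E(G)| \geq 3k|V(G)|$, leaving at least $k|V(G)|+3k \geq k$ edges outside $\bigcup_i E(T_i)$. Your route is more elementary and self-contained: it removes Theorem \ref{jordan2} (and with it the Nash-Williams machinery) from this corollary entirely, and produces a large surplus of available edges rather than exactly $k$; the paper's route, by contrast, shows how its two connectivity theorems dovetail. Your closing caution is also consistent with the paper: its proof uses the $(2,2)$-tight subgraphs from Theorem \ref{jordan2} only as a source of spare edges, never as rigid subgraphs, precisely because $(2,2)$-tightness does not imply Euclidean rigidity (see Figure \ref{threeconnect}).
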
 

\begin{proof}
Since $6k$-connected implies $6k$-edge-connected then by Theorem \ref{jordan} there exists $k$ edge-disjoint $(2,3)$-tight spanning subgraphs $A_1, \ldots, A_k$ and by Theorem \ref{jordan} $k$ edge-disjoint $(2,2)$-tight spanning subgraphs $B_1, \ldots, B_k$. We shall define $A := \cup_{i=1}^k A_i$ and $B := \cup_{i=1}^k B_i$, then $|E(B)|-|E(A)| = k$ and so we may choose $e_1, \ldots, e_k \in E(B) \setminus E(A)$. For any $i,j=1, \ldots,k$ we note that $H_i := A_i +e_i$ will be a $(2,2)$-tight spanning subgraph that contains a $(2,3)$-tight spanning subgraph $A_i$, thus by Theorem \ref{lamanstheorem} and Theorem \ref{laman2}, $H_i$ is rigid in all normed planes. We now note $E(H_i) \cap E(H_j) = \emptyset$ as required.
\end{proof}

\begin{remark}
Corollary \ref{jordan3} only gives that for any normed plane $X$ a graph $G$ will contain $k$ edge-disjoint spanning subgraphs $H_1, \ldots, H_k$ with infinitesimally rigid placements $(H_1,p^1)$, $\ldots$, $(H_k,p^k)$ in $X$. In general this does not guarantee the existence of a single placement $p$ of $G$ such that $(H_1,p), \ldots, (H_k,p)$ are infinitesimally rigid in $X$. However if $\mathcal{R}(H)$ is dense in $\mathcal{W}(H)$ for any subgraph $H \subset G$ then such a placement does exist. An example where this occurs would be any graph in any smooth $\ell_p$ space (see \cite[Lemma 2.7]{infinite}). In contrast, if $X$ has a polyhedral unit ball then this property does not hold in general (see \cite[Lemma 16]{polyhedra}).
\end{remark}

\end{document}